\newtheorem{thm}{Theorem}[section]
\newtheorem{defi}{Definition}[section]
\newtheorem{lem}{Lemma}[section]
\newtheorem{prop}{Proposition}[section]
\newtheorem{rmk}{Remark}[section]
\newtheorem{assump}{Assumption}[section]
\newtheorem*{thm*}{Theorem}
\numberwithin{equation}{section}
\DeclareMathOperator*{\argmin}{arg\,min}
\newcommand{\width}{m}
\newcommand{\R}{\ifmmode\mathbb{R}\else$\mathbb{R}$\fi}
\newcommand{\N}{\ifmmode\mathbb{N}\else$\mathbb{N}$\fi}
\newcommand{\Z}{\ifmmode\mathbb{Z}\else$\mathbb{Z}$\fi}
\newcommand{\Q}{\ifmmode\mathbb{Q}\else$\mathbb{Q}$\fi}
\newcommand{\xx}{{\bm{x}}}
\renewcommand{\epsilon}{\varepsilon}
\newcommand*{\extendadd}{
  \mathbin{
    \mathpalette\extend@add{}
  }
}
\newcommand*{\extend@add}[2]{
  \ooalign{
    $\m@th#1\leftrightarrow$%
    \vphantom{$\m@th#1\updownarrow$}
    \cr
    \hfil$\m@th#1\updownarrow$\hfil
  }
}
\begin{document}

\title{Two-Layer Neural Networks for Partial Differential Equations:\\ Optimization and Generalization Theory}

\author{Tao Luo$^1$ and Haizhao Yang$^2$
	\vspace{0.1in}\\
	$^1$School of Mathematical Sciences, Institute of Natural Sciences, MOE-LSC,\\ and Qing Yuan Research Institute, \\Shanghai Jiao Tong University, Shanghai, 200240, P.R. China\\
    $^2$Department of Mathematics, Purdue University, West Lafayette, IN 47907,  USA
}

\maketitle

\begin{abstract}
 The problem of solving partial differential equations (PDEs) can be formulated into a least-squares minimization problem, where neural networks are used to parametrize PDE solutions. A global minimizer corresponds to a neural network that solves the given PDE. In this paper, we show that the gradient descent method can identify a global minimizer of the least-squares optimization for solving second-order linear PDEs with two-layer neural networks under the assumption of over-parametrization. We also analyze the generalization error of the least-squares optimization for second-order linear PDEs and two-layer neural networks, when the right-hand-side function of the PDE is in a Barron-type space and the least-squares optimization is regularized with a Barron-type norm, without the over-parametrization assumption.
\end{abstract}

{\bf Keywords.} Deep learning, over-parametrization, partial differential equations, optimization convergence, generalization error.

{\bf AMS subject classifications: 68U99,     65N30 and     65N25.}

\section{Introduction}\label{sec: introduction}

Deep learning, originated in computer science, has revolutionized many fields of science and engineering recently. This revolution also includes broad applications of deep learning in computational and applied mathematics, e.g., many breakthroughs in solving partial differential equations (PDEs)  \cite{doi:10.1002/cnm.1640100303,712178,RUDD2015,Carleo2017,Han8505,E2017,BERG2018,Khoo2018,RAISSI2019686,
SIRIGNANO2018,Huang2019,SelectNet}. The key idea of these approaches is to reformulate the PDE solution into a global minimizer of an expectation minimization problem, where deep neural networks (DNNs) are applied for discretization and the stochastic gradient descent (SGD) is adopted to solve the minimization problem. These methods probably date back to the 1990s (e.g., see \cite{doi:10.1002/cnm.1640100303,712178}) and were revisited recently \cite{RUDD2015,Han8505,E2017,BERG2018,Khoo2018,SIRIGNANO2018,RAISSI2019686} due to the significant development of GPU computing that accelerates DNN computation. Though these approaches have remarkable empirical successes, their theoretical justification remains vastly open. 

For simplicity, let us use a PDE defined on a domain $\Omega$ in a compact form with equality constrains to illustrate the main idea, e.g.,
\begin{equation}\label{eqn:PDE}
    \left\{
    \begin{aligned}
        \fL u & =f\quad \text{in }\Omega,         \\
        \fB u & =g\quad \text{on }\partial\Omega,
    \end{aligned}
    \right.
\end{equation}
where $\fL$ is a differential operator and $\fB$ is the operator for specifying an appropriate boundary condition. In the least squares-type methods, DNNs, denoted as $\phi(\vx;\vtheta)$ with a parameter set $\vtheta$, are applied to parametrize the solution space of the PDE and a best parameter set $\vtheta_\fD$ is identified via minimizing an expectation called the population risk (also known as the population loss):
\begin{equation}\label{eqn:poloss}
    \vtheta_\fD=\argmin_{\vtheta} \RD(\vtheta):=
    \Exp_{\vx\sim U(\Omega)} \left[\ell(\fL\phi(\vx;\vtheta),f(\vx))\right] + \gamma \Exp_{\vx\sim U(\partial\Omega)} \left[\ell(\fB\phi(\vx;\vtheta),g(\vx))\right],
\end{equation}
with a positive parameter $\gamma$ and a loss function typically taken as $\ell(y,y')=\frac{1}{2}|y-y'|^2$, where the expectation are taken with uniform distributions $U(\Omega)$ and $U(\partial\Omega)$ over $\Omega$ and $\partial\Omega$, respectively. To implement the expectation minimization above using the gradient descent method (GD), a discrete set of samples are randomly drawn to obtain an empirical risk (or empirical loss) function
\begin{equation}\label{eqn:emloss}
\RS(\vtheta):=
    \frac{1}{n}\sum_{\{\vx_i\}_{i=1}^n\subset\Omega} \ell(\fL\phi(\vx_i;\vtheta),f(\vx_i)) + \gamma     \frac{1}{n}\sum_{\{\vx_i\}_{i=1}^n\subset \partial\Omega} \ell(\fB\phi(\vx_i;\vtheta),g(\vx_i))
\end{equation}
used in each GD iteration to update $\vtheta$. The set of random samples is usually renewed per iteration resulting in the SGD algorithm for minimizing \eqref{eqn:poloss}. In this paper, we will focus on the case when these samples are fixed in all iterations. There are mainly three theoretical point of view to study the above deep learning-based PDE solver: 
\begin{enumerate}
\item \textbf{Approximation theory:} given a budget of the size of DNNs, e.g. width $\width$ and depth $L$, or a budget of the total number of parameters $N_\mathrm{para}$, what is the accuracy of $\phi(\bm{x};\vtheta_\fD)$ approximating the solution of the PDE? 
\item \textbf{Optimization convergence:} under what condition can gradient descent converges to a global minimizer of \eqref{eqn:poloss} and \eqref{eqn:emloss}?
\item \textbf{Generalization analysis:} if only finitely many samples are available, how good is the global minimizer of \eqref{eqn:emloss} compared to the global minimizer of \eqref{eqn:poloss}?
\end{enumerate}

Deep network approximation theory has shown that DNNs admit powerful approximation capacity. First, DNNs can approximate high-dimensional functions with an appealing approximation rate, e.g., Barron spaces \cite{barron1993,Weinan2019,e2019priori}, Korobov spaces \cite{Hadrien}, band-limited functions \cite{doi:10.1002/mma.5575,bandlimit}, compositional functions \cite{poggio2017,Weinan2019BarronSA}, smooth functions \cite{2019arXiv190609477Y,Shen3,MO}, solution spaces of certain PDEs \cite{HJKN19_814}, and even general continuous functions \cite{Shen4,Shen5}. Second, DNNs can achieve exponential approximation rates, i.e., the approximation error exponentially decays when the number of parameters increases, for target functions in the polynomial spaces \cite{yarotsky2017,bandlimit,Shen3}, the smooth function spaces \cite{bandlimit,DBLP:journals/corr/LiangS16}, the analytic function space \cite{DBLP:journals/corr/abs-1807-00297}, the function space admitting a holomorphic extension to a Bernstein polyellipse \cite{Opschoor2019}, and even general continuous functions \cite{Shen4}. Theories in deep network approximation have provided attractive upper bounds of the accuracy of $\phi(\bm{x};\vtheta_\fD)$ approximating the solution of the PDE in various function spaces. In realistic applications, it might be more interesting to characterize deep network approximation in terms of $\width$ and $L$ simultaneously than the characterization in terms of $N_\mathrm{para}$. We refer reader to \cite{Shen1,Shen2,Shen3,Shen4,Wang} for examples in terms of $\width$ and $L$. 

Though DNNs are powerful in terms of approximation theory, obtaining the best DNN $\phi(\bm{x};\vtheta_\fD)$ in \eqref{eqn:poloss} to approximate the PDE solution is still challenging. It is conjectured that, under certain conditions, SGD is able to identify an approximate global minimizer of \eqref{eqn:poloss} with accuracy depending on $N_\mathrm{para}$ and the sample size $n$. Though deep learning-based PDE solvers have been proposed since the 1990s, there might be no existing literature to investigate this conjecture, to the best of our knowledge. In this paper, assuming that the same set of random samples are used in minimizing \eqref{eqn:emloss}, it is shown that GD can converge to a global minimizer of \eqref{eqn:emloss}, denoted as $\vtheta_S$, for second-order linear PDEs and two-layer neural networks, as long as $N_\mathrm{para}$ is sufficiently large depending on $n$, i.e., in the over-parametrization regime. Furthermore, we will quantify how good the global minimizer $\vtheta_S$ of the empirical loss in \eqref{eqn:emloss} is compared to the global minimizer $\vtheta_\fD$ of the population loss in \eqref{eqn:poloss}, when the empirical loss is regularized with a penalty term using the path norm of $\vtheta$ and the PDE solution is in a Barron-
type space, a variant of the Barron-type space in \cite{barron1993,Weinan2019}. Our analysis is an extension of the seminal work of neural tangent kernels \cite{Arthur18,Simon18,du2018gradient} and the generalization analysis in \cite{barron1993,Weinan2019} for function regression problems to the case of PDE solvers.


Though the convergence of deep learning-based regression under the over-parametrization assumption has been proposed recently \cite{Arthur18,Simon18,MeiE7665,du2018gradient,Yiping20}, we would like to emphasize that the minimization of solving a PDE via \eqref{eqn:poloss} is more difficult and techinical. In the case of solving PDEs, differential operators have changed the optimization objective function considered in the literature. Balancing between the differential operator and the boundary operator makes it more challenging to solve the optimization problem. For example, we consider a second order elliptic equation with variable coefficients, i.e., $\fL u=f$ where $\fL u=\sum_{\alpha,\beta=1}^d A_{\alpha\beta}(\vx)u_{x_{\alpha}x_{\beta}}$. Given a two-layer neural network $\phi(\vx;\vtheta)=\sum_{k=1}^\width a_k\sigma(\vw_k^\T\vx)$ with an activation function $\sigma(z)=\max\{0,\frac{1}{6}z^3\}$ to parametrize the PDE solution, solving the original PDE via deep learning is equivalent to solving a regression problem with another type of neural network  $f(\vx;\vtheta):=\fL\phi(\vx;\vtheta)=\sum_{k=1}^\width a_k\vw_k^\T\mA(\vx)\vw_k\sigma''(\vw_k^\T\vx)$ to fit $f(\vx)$. Note that $\sigma''(z)=\ReLU(z)=\max\{0,z\}$. Thus, the dependence of $f(\vx;\vtheta)$ on $\vw_k$ is essentially cubic rather than linear (more precisely, positive homogeneous). 

The generalization analysis of deep learning-based regression under the over-parametrization assumption was studied recently in \cite{Arthur18,Yuan1,Chen1}. The generalization analysis with a regularization term based on the path norm without the over-parametrization assumption was proposed in \cite{Weinan2019,e2019priori,E_2020}. In the case of PDE solvers, differential operators have enhanced the nonlinearity of the generalization analysis and hence make it more difficult to analyze. In the case of Linear Kolmogorov Equations and parabolic PDEs, examples of generalization analysis of PDE solvers were presented in \cite{DBLP:journals/corr/abs-1809-03062,Han2018ConvergenceOT}. In the case of linear second-order elliptic and parabolic type PDEs, the generalization error of the physics-informed neural network was analyzed in \cite{PINNgen}. However, the generalization analysis for generic PDEs is vastly open. Our attempt is for second-order linear PDEs with variable coefficients. Let us consider the second order elliptic equation with variable coefficients in the above paragraph again. The variable coefficients $A_{\alpha\beta}(\vx)$ lead to highly nonlinearity in the network $f(\vx;\vtheta)$ depending on $\vx$, since we do not make any assumption on the smoothness of $\mA(\vx)$. We develop new analysis of the Rademacher complexity to overcome these difficulties. Unlike existing work, our a priori estimates do not require any truncation on $f(\vx;\vtheta)$ (or $\phi(\vx;\vtheta)$). This is important because a common truncation trick does not lead to the boundedness of $f(\vx;\vtheta)$ in our PDE solver. In fact, if one considers the standard truncation on $\phi(\vx;\vtheta)$, e.g., $\fT_{[0,1]}\phi(\vx;\vtheta):=\min\{\max\{\phi(\vx;\vtheta),0\},1\}$, then $\fL[\fT_{[0,1]}\phi(\vx;\vtheta)]$ might still be unbounded because $\fL$ is a second order differential operator. Another naive trick is to truncate $f(\vx;\vtheta)$, i.e., $\fT_{[0,1]}f(\vx;\vtheta):=\min\{\max\{f(\vx;\vtheta),0\},1\}$. But this does not make sense since we want to find a solution satisfying $\fL \phi(\vx;\vtheta)\approx f(\vx)$ instead of $\fT_{[0,1]}\fL \phi(\vx;\vtheta)\approx f(\vx)$.

This paper will be organized as follows. In Section \ref{sec:DLPDE}, deep learning-based PDE solvers will be introduced in detail. In Section \ref{sec:main}, our main theorems for the convergence and generalization analysis of GD for minimizing \eqref{eqn:emloss} will be presented. In Section \ref{sec:conv_gd}, the proof of the GD convergence theorems will be shown. In Section \ref{sec:gen_err}, the proof of the generalization bound will be given. Finally, we conclude our paper in Section \ref{sec:conc}.

\section{Deep Learning-based PDE Solvers}
\label{sec:DLPDE}

We will introduce deep learning-based PDE solvers with necessary notations in this paper in preparation for our main theorems in Section \ref{sec:main}.

\subsection{Notations, Definitions, and Basic Lemmas}
\label{sec:note}

The main notations of this paper are listed as follows.
\begin{itemize}    
\item Vectors and matrices are denoted in bold font. All vectors are column vectors.
    \item For a parameter set $\Theta$, $\text{vec}\{\Theta\}$ denotes the vector consists of all the elements of $\Theta$.   
    \item $[n]$ denotes $\{1,2,\dots,n\}$. 
    \item $\|\cdot\|_1$ and $\|\cdot\|_\infty$ represent the $\ell_1$ and $\ell_\infty$ norms of a vector, respectively.
    \item Big ``$O$" notation: for any functions $g_1,g_2:\sR\to\sR^+$, $g_1(z)=O(g_2(z))$ as $z\to+\infty$ means that ${g_1(z)}\leq Cg_2(z)$ for some constants $C$, $z_0$ and any $z\geq z_0$.
    \item Small ``$o$" notation: for any functions $g_1,g_2:\sR\to\sR^+$, $g_1(z)=o(g_2(z))$ as $z\to+\infty$ means that $\lim_{z\rightarrow \infty}\frac{f(z)}{g(z)}=0$.
    \item Let $\sigma:\R\to \R$ denote the activation function, e.g., $\sigma(x)=\max\{0,\frac{1}{6}x^3\}$ is the activation function used in this paper.  With the abuse of notations, we define $\sigma:\R^d\to \R^d$ as $\sigma(\xx)=(
          \max\{0,x_1\}, \dots,
          \max\{0,x_d\})^\T$ for any $\xx=(x_1,\dots,x_d)^\T\in \R^d$, where $\T$ denotes the transpose of a matrix. Similarly, for any function $f$ defined on $\R$ and vector $\bm{x}\in\R^d$, $f(\bm{x})=[f(x_1),\dots,f(x_d)]^\T$.
\end{itemize}

Mathematically, DNNs are a form of function parametrization via the compositions of simple non-linear functions \cite{IanYoshuaAaron2016}. Let us focus on the so-called fully connected feed-forward neural network (FNN) defined below. The FNN is a general DNN structure that includes other advanced structures as its special cases, e.g., convolutional neural network \cite{IanYoshuaAaron2016}, ResNet \cite{DBLP:journals/corr/HeZRS15}, and DenseNet \cite{DBLP:journals/corr/HuangLW16a}.

\begin{defi}[Fully connected feed-forward neural network (FNN)]  \label{def:DNN} 
An FNN of depth $L$ defined on $\R^d$ is the composition of $L$ simple nonlinear functions as follows:
\begin{equation*}
    \phi(\vx;\vtheta):=\va^\T \vh^{[L]} \circ \vh^{[L-1]} \circ \cdots \circ \vh^{[1]}(\vx),
\end{equation*}
where $\vh^{[l]}(\vx)=\sigma\left(\mW^{[l]} \vx + \vb^{[l]} \right)$ with $\mW^{[l]} \in \sR^{\width_{l}\times \width_{l-1}}$, $\vb_l \in \sR^{\width_l}$ for $l=1,\dots,L$, $\va\in \sR^{\width_L}$, $\width_0=d$, and $\sigma$ is a non-linear activation function. Each $\vh^{[l]}$ is referred as a hidden layer,  $\width_l$ is the width of the $l$-th layer, and $L$ is called the depth of the FNN. $\vtheta:=\mathrm{vec}\{\va,\{\mW^{[l]},\vb^{[l]}\}_{l=1}^L\}$ denotes the set of all parameters in $\phi$.
\end{defi}
Without loss of generality, we consider FNNs omitting $\vb^{[l]}$'s. In fact, for a network with $\vb^{[l]}$'s, one can simply set $\tilde{\vx}=(\vx^\T,1)^\T$ and $\tilde{\vW}^{[l]}=(\vW^{[l]},\vb^{[l]})$ for each $l\in[L]$, and work on $\vtheta=\mathrm{vec}\{\va,\{\tilde{\mW}^{[l]}\}_{l=1}^L\}$ by noting that $\tilde{\vW}^{[l]}\tilde{\vx}=\mW^{[l]} \vx + \vb^{[l]}$. In this paper, we will focus on networks with $L=1$.

To analyze PDE solvers, we introduce a new kind of Barron functions with their associated Barron norm, and a path norm defined below.

\begin{defi}[Path norm]\label{prop:2L}
    The path norm of a two-layer neural network
    \[
    \phi(\vx;\vtheta)=\sum_{k=1}^\width a_k\sigma(\vw_k^\T\vx),
    \]
    with an activation function $\sigma$ and a parameter set $\vtheta$ is defined as
\[
\|\vtheta\|_{\fP}:=\sum_{j=1}^\width|a_j|\|\bm{w}_j\|_1^3.
\]
\end{defi}


\begin{defi}\label{def:bfun}
    A function $f:\Omega\to\sR$ is called a Barron-type function if $f$ has an integral representation
    \begin{equation*}
        f(\vx)
         = \Exp_{(a,\vw)\sim\rho}a[\vw^\T\mA(\vx)\vw\sigma''(\vw^\T\vx)+\vb^\T(\vx)\vw\sigma'(\vw^\T\vx)+c(\vx)\sigma(\vw^\T\vx)]\quad\text{for all}\quad \vx\in\Omega,
    \end{equation*}
    where $\rho$ is a probability distribution over $\sR^{d+1}$. The associated Barron norm of a Barron-type function is defined as
    \begin{equation*}
        \norm{f}_{\fB}
         := \inf\limits_{\rho\in \fP_f} \left(\Exp_{(a,\vw)\sim\rho}
        \abs{a}^2\norm{\vw}_1^6\right)^{1/2},
    \end{equation*}
    where 
    $
      \fP_f = \{\rho\mid f(\vx) = \Exp_{(a,\vw)\sim\rho}a[\vw^\T\mA(\vx)\vw\sigma''(\vw^\T\vx)+\vb^\T(\vx)\vw\sigma'(\vw^\T\vx)+c(\vx)\sigma(\vw^\T\vx)],\vx\in\Omega\}
    $.
    The Barron-type space is defined as $\fB(\Omega) = \{f:\Omega\to\sR\mid \norm{f}_{\fB} < \infty\}$.
\end{defi}

Since $\RD(\vtheta)$ cannot be realized in realistic applications due to the fact that the empirical loss $\RS(\vtheta)$ of finitely many samples is actually used in the computation, an immediate question is: how well $\phi(\vx;\vtheta_S)\approx \phi(\vx;\vtheta_\fD)$? Here $\vtheta_S$ is a global minimizer when we minimize the empirical loss of $\RS(\vtheta)$. This is the generalization error analysis of deep learning-based PDE solvers and we will use the Rademacher complexity below to estimate the generalization error in terms of $|\RD(\vtheta_S)-\RS(\vtheta_S)|$.

\begin{defi}[The Rademacher complexity of a function class $\fF$] \label{def:rad}
    Given a sample set $S=\{z_1,\dots,z_n\}$ on a domain $\fZ$, and a class $\fF$ of real-valued functions defined on $\fZ$, the empirical Rademacher complexity of $\fF$ on $S$ is defined as
    \[
    \Rad_S(\fF)=\frac{1}{n}\Exp_{\vtau}\left[ \sup_{f\in \fF} \sum_{i=1}^n \tau_i f(z_i)  \right],
    \]
    where $\tau_1$, $\dots$, $\tau_n$ are independent random variables drawn from the Rademacher distribution, i.e., $\Prob(\tau_i=+1)=\Prob(\tau_i=-1)=\frac{1}{2}$ for $i=1,\dots,n$.
\end{defi}

The Rademacher complexity is a basic tool for generalization analysis. In our analysis, we will use several important lemmas and theorems related to it. For the purpose of being self-contained, they are listed as follows.

First, we recall a well-known contraction lemma for the Rademacher complexity. 
\begin{lem}[Contraction lemma \cite{Shalev-Shwartz2014}]\label{lem..RademacherComplexityContraction}
    Suppose that $\psi_i:\sR\to\sR$ is a $C_\mathrm{L}$-Lipschitz function for each $i\in[n]$. 
    For any $\vy\in\sR^n$, let $\vpsi(\vy)=(\psi_1(y_1),\cdots,\psi_n(y_n))^\T$. For an arbitrary set of vector functions $\fF$ of length $n$ on an arbitrary domain $\fZ$ and an arbitrary choice of samples $S=\{\vz_1,\dots,\vz_n\}\subset\mathcal{Z}$, we have
    \begin{align*}
        \Rad_S(\psi\circ \fF)\leq C_\mathrm{L}\Rad_S(\fF).
    \end{align*}
\end{lem}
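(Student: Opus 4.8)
The plan is to prove this by the classical coordinate‑peeling (Ledoux--Talagrand type) argument. First I would reduce to the case $C_{\mathrm L}=1$: since each $\psi_i/C_{\mathrm L}$ is $1$‑Lipschitz and $\Rad_S$ is positively homogeneous in its function‑class argument, the general statement follows from the normalized one. Second, I would pass to a finite‑dimensional picture: only the values of the functions on $S$ enter, so writing $A:=\{(f(\vz_1),\dots,f(\vz_n))^\T : f\in\fF\}\subseteq\sR^n$, the inequality to prove becomes
\[
\Exp_{\vtau}\sup_{\va\in A}\sum_{i=1}^n\tau_i\psi_i(a_i)\;\le\;\Exp_{\vtau}\sup_{\va\in A}\sum_{i=1}^n\tau_i a_i .
\]

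The heart of the matter is a one‑coordinate statement: for any $A\subseteq\sR^n$, any index $k$, any fixed values of $\{\tau_i\}_{i\ne k}$, and any $1$‑Lipschitz $\psi_k$,
\[
\Exp_{\tau_k}\sup_{\va\in A}\big[\tau_k\psi_k(a_k)+g(\va)\big]\;\le\;\Exp_{\tau_k}\sup_{\va\in A}\big[\tau_k a_k+g(\va)\big],
\]
where $g(\va)$ collects all the other terms and is an arbitrary function of $\va$. To see this I would write the $\tau_k$‑expectation as a two‑point average and merge the two suprema over $A\times A$, obtaining $\tfrac12\sup_{\va,\va'}[\psi_k(a_k)-\psi_k(a_k')+g(\va)+g(\va')]$; then bound $\psi_k(a_k)-\psi_k(a_k')\le|a_k-a_k'|$ by the Lipschitz property, use the symmetry of $g(\va)+g(\va')$ under $\va\leftrightarrow\va'$ to drop the absolute value, and split the resulting supremum over the product $A\times A$ into $\sup_{\va}[a_k+g(\va)]+\sup_{\va'}[-a_k'+g(\va')]=2\,\Exp_{\tau_k}\sup_{\va}[\tau_k a_k+g(\va)]$. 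Applying this lemma to $k=1$, then (after taking the $\tau_1$‑expectation back out) to $k=2$, and so on up to $k=n$ --- at step $k$ the already‑processed coordinates $i<k$ contribute plain $\tau_i a_i$ terms and the not‑yet‑processed ones contribute $\tau_i\psi_i(a_i)$, both of which are harmlessly absorbed into the role of $g$ --- de‑Lipschitzises every coordinate and yields the desired bound; undoing the normalization reinstates the factor $C_{\mathrm L}$.

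The step I expect to need the most care is the one‑coordinate lemma, specifically the passage from $\sup_{\va,\va'}[|a_k-a_k'|+g(\va)+g(\va')]$ to $\sup_{\va,\va'}[a_k-a_k'+g(\va)+g(\va')]$ and the subsequent clean factorization into two independent suprema: this uses only that we are taking a supremum over a product set together with the symmetry of the remaining summand under exchanging the two copies, and it is easy to mishandle if one forgets that $\psi_k(0)=0$ is \emph{not} assumed here (the constant shift is killed by the Rademacher sign, not by a normalization of $\psi_k$). A minor secondary point is the bookkeeping in the induction, namely checking that at each step the one‑coordinate lemma applies verbatim with the appropriate choice of $g$.
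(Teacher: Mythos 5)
Your proof is correct and is the standard Ledoux--Talagrand coordinate-peeling argument; the paper itself does not prove this lemma but cites it to the reference, where essentially this same argument appears. In particular, your one-coordinate reduction (split the $\tau_k$-average, merge into a supremum over $A\times A$, apply Lipschitz, drop the absolute value by the swap symmetry, and refactor into two suprema) is exactly right, and your remark that $\psi_k(0)=0$ is not needed here — because the paper's $\Rad_S$ is defined without an absolute value around $\sum_i\tau_i f(\vz_i)$ — is the correct observation about why no centering hypothesis is required.
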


Second, the Rademacher complexity of linear predictors can be characterized by the lemma below.

\begin{lem}[Rademacher complexity for linear predictors \cite{Shalev-Shwartz2014}]\label{lem..linear}
    Let $\Theta=\{\vw_1,\cdots,\vw_\width\}\in\sR^d$. Let $\fG=\{g(\vw)= \vw^\T\vx:\norm{\vx}_1\leq 1\}$ be the linear function class with parameter $\vx$ whose $\ell^1$ norm is bounded by $1$. Then
    \begin{equation*}
        \Rad_\Theta(\fG)\leq \max_{1\leq k\leq m} \norm{\vw_k}_\infty\sqrt{\frac{2\log(2d)}{\width}}.
    \end{equation*}
\end{lem}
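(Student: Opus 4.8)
The plan is to combine the $\ell^1$--$\ell^\infty$ duality with a finite-class maximal inequality (Massart's lemma). By Definition \ref{def:rad}, the empirical Rademacher complexity of $\fG$ on the sample $\Theta=\{\vw_1,\dots,\vw_\width\}$ is
\[
\Rad_\Theta(\fG)=\frac{1}{\width}\,\Exp_{\vtau}\Bigl[\sup_{\norm{\vx}_1\le 1}\sum_{k=1}^\width \tau_k\,\vw_k^\T\vx\Bigr].
\]
First I would pull the sum inside the supremum, writing $\sum_{k=1}^\width \tau_k\vw_k^\T\vx=\bigl(\sum_{k=1}^\width\tau_k\vw_k\bigr)^\T\vx$; then by H\"older's inequality the supremum over $\norm{\vx}_1\le 1$ is exactly $\Bigl\|\sum_{k=1}^\width \tau_k\vw_k\Bigr\|_\infty$. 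Hence
\[
\Rad_\Theta(\fG)=\frac{1}{\width}\,\Exp_{\vtau}\Bigl[\max_{1\le j\le d}\Bigl|\sum_{k=1}^\width \tau_k (\vw_k)_j\Bigr|\Bigr],
\]
where $(\vw_k)_j$ denotes the $j$-th coordinate of $\vw_k$.

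Next I would realize the inner quantity as a maximum of inner products over a finite set. Let $A=\bigl\{\pm\bigl((\vw_1)_j,\dots,(\vw_\width)_j\bigr)^\T:1\le j\le d\bigr\}\subset\sR^\width$, so $|A|\le 2d$ and $\max_{1\le j\le d}\bigl|\sum_k\tau_k(\vw_k)_j\bigr|=\max_{\va\in A}\vtau^\T\va$. Every $\va\in A$ satisfies $\norm{\va}_2^2=\sum_{k=1}^\width(\vw_k)_j^2\le \width\,\max_{1\le k\le\width}\norm{\vw_k}_\infty^2$. Applying Massart's finite-class lemma, namely $\Exp_{\vtau}\bigl[\max_{\va\in A}\vtau^\T\va\bigr]\le\bigl(\max_{\va\in A}\norm{\va}_2\bigr)\sqrt{2\log|A|}$, and substituting the bounds on $|A|$ and on $\max_{\va\in A}\norm{\va}_2$, I would obtain
\[
\Rad_\Theta(\fG)\le\frac{1}{\width}\cdot\sqrt{\width}\,\Bigl(\max_{1\le k\le\width}\norm{\vw_k}_\infty\Bigr)\sqrt{2\log(2d)}=\max_{1\le k\le \width}\norm{\vw_k}_\infty\sqrt{\frac{2\log(2d)}{\width}},
\]
which is the claimed estimate.

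This argument is classical, so there is no genuine obstacle; the only points requiring care are getting the direction of the duality right (so that the constraint $\norm{\vx}_1\le1$ becomes an $\ell^\infty$ norm of the Rademacher sum, not vice versa) and bounding the coordinatewise second moments $\sum_k(\vw_k)_j^2$ uniformly by $\width\max_k\norm{\vw_k}_\infty^2$, which is what lets the final answer be phrased through $\max_k\norm{\vw_k}_\infty$ rather than a coordinate-dependent quantity. If a self-contained proof of the maximal inequality is desired, it follows from the exponential-moment estimate $\Exp_{\vtau}\exp\bigl(\lambda\max_{\va\in A}\vtau^\T\va\bigr)\le\sum_{\va\in A}\prod_{k}\cosh(\lambda a_k)\le|A|\exp\bigl(\tfrac{\lambda^2}{2}\max_{\va\in A}\norm{\va}_2^2\bigr)$ together with Jensen's inequality and optimization over $\lambda>0$; since the result is cited from \cite{Shalev-Shwartz2014}, I would simply invoke it.
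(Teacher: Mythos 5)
Your proof is correct and is exactly the standard argument from the cited reference (Shalev-Shwartz and Ben-David, 2014): H\"older duality turns the $\ell^1$-ball supremum into the $\ell^\infty$ norm of the Rademacher sum, which is then a maximum over the finite set of $\pm$coordinate vectors controlled by Massart's finite-class lemma. The paper offers no proof of this lemma (it is cited), so there is nothing to compare beyond noting that you have reproduced the textbook proof faithfully, including the correct bound $\norm{\va}_2^2\le \width\max_k\norm{\vw_k}_\infty^2$ that yields the final constant.
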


Finally, let us state a general theorem concerning the Rademacher complexity and generalization gap of an arbitrary set of functions $\fF$ on an arbitrary domain $\fZ$, which is essentially given in \cite{Shalev-Shwartz2014}.
\begin{thm}[Rademacher complexity and generalization gap \cite{Shalev-Shwartz2014}]\label{thm..RademacherComplexityGeneralizationGap}
    Suppose that $f$'s in $\fF$ are non-negative and uniformly bounded, i.e., for any $f\in\fF$ and any $\vz\in\fZ$, $0\leq f(\vz)\leq B$. Then for any $\delta\in(0,1)$, with probability at least $1-\delta$ over the choice of $n$ i.i.d. random samples $S=\{\vz_1,\dots,\vz_n\}\subset\mathcal{Z}$, we have
    \begin{align*}
        \sup_{f\in\fF}\Abs{\frac{1}{n}\sum_{i=1}^n f(\vz_i)-\Exp_{\vz}f(\vz)}
        &\leq 2\Exp_{S}\Rad_{S}(\fF)+B\sqrt{\frac{\log(2/\delta)}{2n}},\\
        \sup_{f\in\fF}\Abs{\frac{1}{n}\sum_{i=1}^n f(\vz_i)-\Exp_{\vz}f(\vz)}
        &\leq 2\Rad_{S}(\fF)+3B\sqrt{\frac{\log(4/\delta)}{2n}}.
    \end{align*}
\end{thm}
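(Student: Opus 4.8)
\textit{Remark on the proof.} This is Theorem~26.5 (together with Lemma~26.2 and the surrounding discussion) of \cite{Shalev-Shwartz2014}; for completeness I outline the argument one would carry out, in the order one would carry it out.

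\textbf{Step 1 (bounded differences and McDiarmid).} For a sample $S=\{\vz_1,\dots,\vz_n\}$ define the one-sided deviation
\[
\Phi(S):=\sup_{f\in\fF}\left(\Exp_{\vz}f(\vz)-\frac1n\sum_{i=1}^n f(\vz_i)\right).
\]
Because every $f\in\fF$ takes values in $[0,B]$, replacing a single coordinate $\vz_j$ by $\vz_j'$ alters $\frac1n\sum_i f(\vz_i)$ by at most $B/n$ uniformly in $f$, so $\Phi$ satisfies the bounded-differences property with constants $B/n$. McDiarmid's inequality then yields, with probability at least $1-\delta$, the bound $\Phi(S)\le \Exp_S\Phi(S)+B\sqrt{\log(1/\delta)/(2n)}$, and the same for the reversed deviation $\sup_{f\in\fF}\big(\frac1n\sum_i f(\vz_i)-\Exp_{\vz}f(\vz)\big)$.

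\textbf{Step 2 (symmetrization).} I would bound $\Exp_S\Phi(S)$ by introducing an independent ghost sample $S'=\{\vz_1',\dots,\vz_n'\}$, writing $\Exp_{\vz}f(\vz)=\Exp_{S'}\frac1n\sum_i f(\vz_i')$, and moving $\Exp_{S'}$ through the supremum by Jensen's inequality to get $\Exp_S\Phi(S)\le \Exp_{S,S'}\sup_{f\in\fF}\frac1n\sum_i\big(f(\vz_i')-f(\vz_i)\big)$. Since each pair $(\vz_i,\vz_i')$ is exchangeable, inserting independent Rademacher signs $\tau_i$ in front of $f(\vz_i')-f(\vz_i)$ leaves the expectation unchanged; splitting the supremum of the sum into two suprema and using that $\tau_i$ and $-\tau_i$ have the same law gives $\Exp_S\Phi(S)\le 2\,\Exp_S\Rad_S(\fF)$. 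Combined with Step~1 (and the analogous inequality for the reversed deviation), a union bound over the two one-sided events produces the first displayed inequality, the maximum of the two one-sided deviations being exactly $\sup_{f\in\fF}\big|\frac1n\sum_i f(\vz_i)-\Exp_{\vz}f(\vz)\big|$.

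\textbf{Step 3 (removing the expectation on $\Rad_S$).} For the second displayed inequality I would apply McDiarmid once more, this time to the map $S\mapsto\Rad_S(\fF)$: changing one $\vz_j$ changes $\sup_{f\in\fF}\sum_i\tau_i f(\vz_i)$ by at most $B$ for every realization of $\vtau$, so again the bounded-differences constants are $B/n$, giving $\Exp_S\Rad_S(\fF)\le \Rad_S(\fF)+B\sqrt{\log(2/\delta)/(2n)}$ on an event of probability $\ge 1-\delta/2$. Intersecting with the (level $\delta/2$) event from Steps~1–2 and collecting the three $\sqrt{\cdot/(2n)}$ terms yields the claimed constant $3B\sqrt{\log(4/\delta)/(2n)}$.

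\textbf{Main obstacle.} There is no deep difficulty here since the result is classical; the one step that must be executed with care is the symmetrization in Step~2 — the exchangeability justification for inserting the $\tau_i$'s, and the Jensen step that pulls $\Exp_{S'}$ through the supremum — together with the bookkeeping of the failure probabilities so that the constants match the stated $\sqrt{\log(2/\delta)/(2n)}$ and $\sqrt{\log(4/\delta)/(2n)}$.
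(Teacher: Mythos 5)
The paper does not prove this theorem at all; it is quoted as a black-box result from \cite{Shalev-Shwartz2014} (it is essentially Theorem~26.5 there, combined with the symmetrization Lemma~26.2 and the McDiarmid/bounded-differences machinery in that chapter). Your outline correctly reproduces the standard textbook proof of the cited result — the bounded-differences step, the ghost-sample symmetrization with Rademacher signs via Jensen and exchangeability, and the second McDiarmid application to replace $\Exp_S\Rad_S$ by $\Rad_S$ with the accompanying union-bound bookkeeping that produces the $\sqrt{\log(2/\delta)/(2n)}$ and $3B\sqrt{\log(4/\delta)/(2n)}$ terms — so it agrees with the source the paper relies on; there is no competing in-paper argument to compare against.
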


\subsection{Expectation Minimization}\label{sec:sol}

We will focus on the least-squares method in \eqref{eqn:poloss} for the boundary value problem (BVP) in \eqref{eqn:PDE} to discuss the expectation minimization, though the expectation minimization can either be formulated from the least-squares method \cite{BERG2018,SIRIGNANO2018,RaissiPerdikarisKarniadakis2019} or the variational formulation  \cite{EYu2018,Liao2019}. 
As we shall see in the next subsection, an initial value problem (IVP) can also be formulated into a BVP and solved by the expectation minimization in this subsection. 

The objective function in \eqref{eqn:poloss} consists of two parts: one part for the PDE operator in the domain interior and another part for the boundary condition at the boundary. Therefore, GD has to balance between these two parts and its performance heavily relies on the choice of the parameter $\gamma$ in \eqref{eqn:poloss}. To remove the hyper-parameter $\gamma$ and solve the balancing issue, we will introduce special DNNs in \cite{SelectNet,Deflation} satisfying various boundary conditions by design, i.e., $\mathcal{B}\phi(\vx;\vtheta)=g(\bm{x})$ is always fulfilled on $\partial\Omega$. Then the expectation minimization in \eqref{eqn:poloss} is reduced to 
\begin{equation}\label{eqn:poloss2}
    \vtheta_\fD=\argmin_{\vtheta} \RD(\vtheta):=
    \Exp_{\vx\in\Omega} \left[\ell(\fL\phi(\vx;\vtheta),f(\vx))\right].
\end{equation}
Special neural networks for three types of boundary conditions will be introduced. Without loss of generality, we will take the example of one-dimensional problems on the domain $\Omega=[a, b]$. Networks for more complicated boundary conditions in high-dimensional domains can be constructed similarly. 

\vspace{0.25cm}

  \noindent  \textbf{Case 1. Dirichlet Boundary Conditions:} $u(a)=a_0, \ u(b)=b_0$.

In this case, two special functions $h_1(x)$ and $h_2(x)$ are used to augment a neural network $\tilde{\phi}(x; \vtheta)$ to construct the final neural network $\phi(x; \vtheta)$ as the solution network:
\[
\phi(x; \vtheta) = h_1(x) \tilde{\phi}(x; \vtheta)+h_2(x).
 \]
$h_1(x)$ and $h_2(x)$ are chosen such that $\phi(x; \vtheta)$ automatically satisfies the Dirichlet boundary conditions no matter what $\vtheta$ is. Then $\phi(x; \vtheta)$ is trained to satisfy the differential operator in the interior of the domain $\Omega$ by solving \eqref{eqn:poloss2}.

To achieve this goal, $h_1(x)$ and $h_2(x)$ are constructed for two purposes: 1) construct $h_1(x)$ such that $h_1(x) \tilde{\phi}(x; \vtheta)$ satisfies  the homogeneous Dirichlet boundary condition; 2) construct $h_2(x)$ such that $h_2(x)$ satisfies the given inhomogeneous Dirichlet boundary conditions. Therefore, $h_1(x)$ can be set as
\begin{equation*}
h_1(x)= (x-a)^{p_a}(x-b)^{p_b},
\end{equation*}
where $0<p_a,\  p_b \leq 1$, and $h_2(x)$ can be chosen as 
\begin{equation*}
h_2(x)=(b_0-a_0)(x-a)/(b-a)+a_0.
\end{equation*}
Note that $p_a$ and $p_b$ should be chosen appropriately to avoid introducing a singular function that $\tilde{\phi}(x; \vtheta)$ needs to approximate. For instance, if the exact PDE solution is $u(x)=(x-a)^s(x-b)^sv(x) + h_1(x)$ with $v(x)$ as a smooth function and $s>0$, 
$p_a=p_b>s$ results in $\tilde{\phi}(x; \vtheta)\approx (x-a)^{s-p_a}(x-b)^{s-p_b}v(x)$, which makes the approximation very challenging.

\vspace{0.25cm}
\noindent \textbf{Case 2. Mixed  Boundary Conditions:} $u'(a)=a_0$, $u(b)=b_0$.

Similar to Case $1$, two special functions $h_1(x)$ and $h_2(x)$ are used to augment a neural network $\tilde{\phi}(x; \vtheta)$ to construct the final neural network $\phi(x; \vtheta)$ as the solution network:
\[
\phi(x; \vtheta) = h_1(x) \tilde{\phi}(x; \vtheta)+h_2(x).
 \]
$h_1(x)$ and $h_2(x)$ are chosen such that $\phi(x; \vtheta)$ automatically satisfies the mixed boundary conditions no matter what $\vtheta$ is. Then $\phi(x; \vtheta)$ is trained to satisfy the differential operator in the interior of the domain $\Omega$ by solving \eqref{eqn:poloss2}.

To achieve this goal, $h_1(x)$ and $h_2(x)$ are constructed as
\begin{equation*}
h_1(x)= (x-a)^{p_a}
\end{equation*}
with $1<p_a \leq 2$ and $h_2(x)$ can be chosen as 
\begin{equation*}
h_2(x)=-(b-a)^{p_a}\tilde{\phi}(b;\vtheta)+a_0x+b_0-a_0b.
\end{equation*}

\noindent \textbf{Case 3.  Neumann Boundary Conditions:} $u'(a)=a_0$, $u'(b)=b_0$.

Similar to Case $1$ and $2$, we augment a neural network $\tilde{\phi}(x; \vtheta)$ to construct the final neural network $\phi(x; \vtheta,{c}_1,{c}_2)$ as the solution network:
  \begin{equation*}
  \phi(x;\vtheta, {c}_1, {c}_2)=\exp(\frac{p_a x}{a-b})(x-a)^{p_a}\big((x-b)^{p_b}\tilde{\phi}(x;\vtheta)+ {c}_2\big)+ {c}_1+\frac{(b_0-a_0)}{2(b-a)}(x-a)^2+a_0x.
  \end{equation*}
where $1<p_a,p_b\leq2$,  $ {c}_1$ and $ {c}_2$ are two parameters  to be trained together with $\vtheta$. Then $\phi(x; \vtheta, {c}_1, {c}_2)$ automatically satisfies the Neumann boundary conditions no matter what parameters are and $\phi(x; \vtheta, {c}_1, {c}_2)$ is trained to satisfy the differential operator in the interior of the domain $\Omega$ by solving \eqref{eqn:poloss2}.


\subsection{Scope of Analysis and Applications}\label{sec:scope}
In Section \ref{sec:sol}, we have simplified the optimization problem from \eqref{eqn:poloss} to \eqref{eqn:poloss2} for BVP in \eqref{eqn:PDE}. Now we will show that various initial/boundary value problems can be formulated as a BVP in the form of \eqref{eqn:PDE}. This helps us to simplify the optimization convergence and generalization analysis of deep learning-based PDE solvers to the case of BVP in \eqref{eqn:PDE} solved by \eqref{eqn:poloss2}. The analysis of a larger scope of applications has been naturally included in the analysis of BVPs.

Let us assume that the domain $\Omega\subset\mathbb{R}^d$ is bounded. Typical PDE problems of interest can be summerized as:
\begin{itemize}
  \item Elliptic equation:
  \begin{equation}\label{01_1}
  \begin{split}
  &\fL u(\vx)=f(\vx)\text{~in~}\Omega,\\
  &\mathcal{B} u(\vx)=g_0(\vx)\text{~on~}\partial\Omega.
  \end{split}
  \end{equation}
  \item Parabolic equation:
  \begin{equation}\label{01_2}
  \begin{split}
  &\frac{\partial u(\vx,t)}{\partial t}-\fL u(\vx,t)=f(\vx,t)\text{~in~}\Omega\times(0,T),\\
  &\mathcal{B} u(\vx,t)=g_0(\vx,t)\text{~on~}\partial\Omega\times(0,T),\\
  &u(\vx,0)=h_0(\vx)\text{~in~}\Omega.
  \end{split}
  \end{equation}
  \item Hyperbolic equation:
  \begin{equation}\label{01_3}
  \begin{split}
  &\frac{\partial^2 u(\vx,t)}{\partial t^2}-\fL u(\vx,t)=f(\vx,t)\text{~in~}\Omega\times(0,T),\\
  &\mathcal{B} u(\vx,t)=g_0(\vx,t)\text{~on~}\partial\Omega\times(0,T),\\
  &u(\vx,0)=h_0(\vx),\quad\frac{\partial u(\vx,0)}{\partial t}=h_1(\vx)\text{~in~}\Omega.
  \end{split}
  \end{equation}
\end{itemize}
In the above equations, $u$ is the unknown solution function; $f$, $g_0$, $h_0$, $h_1$ are given data functions; $\fL$ is a spatial differential operator with respect to $x$; $\mathcal{B}$ is a boundary operator specifying a certain type of boundary conditions.

As discussed in \cite{SelectNet}, when the temporal variable $t$ is treated as an extra spatial coordinate, we can unify the above initial/boundary value problems in \eqref{01_1}-\eqref{01_3} in the following form
\begin{equation}\label{01}
\begin{split}
&\fL u(\bm{y})=f(\bm{y})\text{~in~}Q,\\
&\mathcal{B}u(\bm{y})=g(\bm{y})\text{~in~}\Gamma,
\end{split}
\end{equation}
where $\bm{y}$ includes the spatial variable $\vx$ and possibly the temporal variable $t$; $\fL u=f$ represents a generic time-independent PDE; $\mathcal{B}u=g$ specifies the original boundary condition on $\vx$ and possibly the initial condition of $t$; $Q$ and $\Gamma$ are the corresponding new domains of the equations. For the purpose of convenience, we will still use the BVP in \eqref{eqn:PDE} instead of \eqref{01} afterwards. 

Though deep learning-based PDE solvers work for high-order differential equations in general domains, we consider second order differential equations with variable coefficients in $\Omega=[0,1]^d$ in our analysis. The generalization to high-order differential equations and other domains follows straightforwardly and we leave it as future work. We will use the second order differential operator $\fL$ in a non-divergence form
    \begin{equation}\label{eqn:L}
        \fL u = \sum_{\alpha,\beta=1}^d A_{\alpha\beta}(\vx)u_{x_{\alpha}x_{\beta}} + \sum_{\alpha=1}^d {b}_\alpha(\vx)u_{x_\alpha} + c(\vx) u.
    \end{equation}
If $\fL$ is in a divergence form, e.g.,
    \begin{equation*}
        \fL u = \sum_{\alpha,\beta=1}^d \left(A_{\alpha\beta}(\vx)u_{x_{\alpha}}\right)_{x_{\beta}} + \sum_{\alpha=1}^d b_\alpha(\vx)u_{x_{\alpha}} + c(\vx) u,
    \end{equation*}
    then we can represent it in a non-divergence form as
    \begin{equation*}
        \fL u = \sum_{\alpha,\beta=1}^d A_{\alpha\beta}(\vx)u_{x_{\alpha}x_{\beta}} + \sum_{\alpha=1}^d \hat{b}_\alpha(\vx)u_{x_\alpha} + c(\vx) u
    \end{equation*}
    with
    \begin{equation*}
        \hat{b}_\alpha = b_\alpha + \sum_{\beta=1}^d\frac{\partial A_{\alpha\beta}}{\partial x_\beta}.
    \end{equation*}
    
Recall that we introduce two functions $h_1(\vx)$ and $h_2(\vx)$ to augment a neural network $\tilde{\phi}(x; \vtheta)$ to construct the final neural network 
\[
\phi(\vx; \vtheta) = h_1(\vx) \tilde{\phi}(\vx; \vtheta)+h_2(\vx)
 \]
 as the solution network that automatically satisfies given Dirichlet boundary conditions, which makes it sufficient to solve the optimization problem in \eqref{eqn:poloss2} to get the desired neural network. In this case, $\fL \phi(\vx;\vtheta)= f(\vx)$ is equivalent to $\tilde{\fL} \tilde{\phi}(\vx;\vtheta)= \tilde{f}(\vx)$, where
 \[
 \tilde{\fL} =  \sum_{\alpha,\beta=1}^d \tilde{A}_{\alpha\beta}(\vx)  u_{x_{\alpha}x_{\beta}}   + \sum_{\alpha=1}^d \tilde{b}_\alpha(\vx)  u_{x_\alpha} +  \tilde{c}(\vx) ,
 \] 
 \[
 \tilde{A}_{\alpha\beta}(\vx) =A_{\alpha\beta}(\vx)  h_1(\vx),
 \]
 \[
 \tilde{b}_\alpha(\vx) =  {b}_\alpha(\vx) h_1(\vx) +   \sum_{\beta=1}^d \left(A_{\alpha\beta}(\vx) + A_{\beta\alpha}(\vx)\right) \partial_{x_{\beta}} h_1(\vx) ,
 \]
 \[
 \tilde{c}(\vx) =    \sum_{\alpha,\beta=1}^d A_{\alpha\beta}(\vx)  \partial_{x_{\alpha}}\partial_{x_{\beta}} h_1(\vx)  +\sum_{\alpha=1}^d {b}_\alpha(\vx)\partial_{x_\alpha} h_1(\vx)  +  c(\vx) h_1(\vx),
 \]
 and
 \[
 \tilde{f}(\vx)=  f(\vx)-\fL(h_2(\vx)).
 \]
 Therefore, the optimization convergence and generalization analysis of \eqref{eqn:poloss2} is equivalent to 
 \begin{equation}\label{eqn:poloss3}
    \vtheta_\fD=\argmin_{\vtheta} \RD(\vtheta):=
    \Exp_{\vx\in\Omega} \left[\ell(\tilde{\fL}\tilde{\phi}(\vx;\vtheta),\tilde{f}(\vx))\right],
\end{equation}
 which gives
 \[
 \phi(\vx; \vtheta_\fD) = h_1(\vx) \tilde{\phi}(\vx; \vtheta_\fD)+h_2(\vx)
 \]
 as a best solution to the PDE in \eqref{eqn:PDE} parametrized by DNNs. The corresponding empirical risk is
 \begin{equation}\label{eqn:emloss3}
\RS(\vtheta):=
    \frac{1}{n}\sum_{\{\vx_i\}_{i=1}^n\subset\Omega} \ell(\tilde{\fL}\tilde{\phi}(\vx_i;\vtheta),\tilde{f}(\vx_i)),
\end{equation}
which gives $\vtheta_S=\argmin_{\vtheta} \RS(\vtheta)$ and 
 \[
 \phi(\vx; \vtheta_S) = h_1(\vx) \tilde{\phi}(\vx; \vtheta_S)+h_2(\vx).
 \]
 
Similarly, in the case of other two types of boundary conditions, the corresponding optimization problem in \eqref{eqn:poloss} can also be transformed to \eqref{eqn:poloss3} and its discretization in \eqref{eqn:emloss3} with an appropriate differential operator $\tilde{\fL}$ and a right-hand-side function $\tilde{f}$.

In sum, the discussion in Section \ref{sec:sol} and here indicates that the optimization and generalization analysis of deep learning-based PDE solvers for various IVPs and BVPs with different boundary conditions can be reduced to the analysis of \eqref{eqn:poloss3} and \eqref{eqn:emloss3} with $\tilde{\fL}$ in a non-divergence form. In the next section, we will present our main theorems for this analysis. For simplicity, we will still use the notation of $\fL$ and $f$ instead of $\tilde{\fL}$ and $\tilde{f}$ in our analysis afterwards.

\section{Main Results}
\label{sec:main}

In this section, we introduce our main results on the convergence of GD and the generalization error of neural network-based least-squares solvers for PDEs using two-layer neural networks on $\Omega=[0,1]^d$. Throughout our analysis, we we assume $|f|\leq 1$ and focus on second-order differential operators $\fL$ given in \eqref{eqn:L} satisfying the assumption below.

\begin{assump}[Symmetry and boundedness of $\fL$]\label{asp:bound}
Throughout the analysis of this paper, we assume $\fL$ in \eqref{eqn:L} satisfies the condition: there exists $M\geq 1$\footnote{The upper bound $M$ is not necessarily greater than $1$. We set this for simplicity.} such that for all $\vx\in\Omega=[0,1]^d$, $\alpha,\beta\in[d]$, we have $A_{\alpha\beta}=A_{\beta\alpha}$ 
\begin{align}\label{eqn:introM}
    \abs{A_{\alpha\beta}(\vx)} \leq M,\quad \abs{b_\alpha(\vx)}\leq M,\quad \text{and}\quad\abs{c(\vx)}\leq M.
\end{align}
\end{assump}

First, we show that, under suitable assumptions, the emprical risk $\RS(\vtheta)$ of the PDE solution represented by an over-parametrized two-layer neural networks converges to zero, i.e., achieving a global minimizer, with a linear convergence rate by GD. In particular, as discussed in Section \ref{sec:DLPDE}, it is sufficient to prove the convergence for minimizing the empirical loss
 \begin{equation}\label{eqn:emloss4}
    \vtheta_S=\argmin_{\vtheta} \RS(\vtheta):=
    \frac{1}{n}\sum_{S=\{\vx_i\}_{i=1}^n\subset\Omega} \ell({\fL}\phi(\vx_i;\vtheta),{f}(\vx_i)),
\end{equation}
where $S:=\{\vx_i\}_{i=1}^n$ is a given set of i.i.d. samples with the uniform distribution $\fD$ over $\Omega=[0,1]^d$, and the two-layer neural network used here is constructed as
\begin{equation}\label{eqn:NN}
    \phi(\vx;\vtheta)=\sum_{k=1}^\width a_k\sigma(\vw_k^\T\vx),
\end{equation}
where for $k\in[\width]$, $a_k\in\sR$, $\vw_k\in\sR^d$, $\vtheta=\mathrm{vec}\{a_k,\vw_k\}_{k=1}^\width$, and $\sigma(x)=\max\{\frac{1}{6}x^3,0\}$. Our main result of the linear convergence rate is summarized in Theorem \ref{thm:lcr} below.

\begin{thm}[Linear convergence rate] \label{thm:lcr}
Let $\vtheta^0 := \mathrm{vec}{\{a_k^0, \vw_k^0\}}_{k=1}^\width$ at the GD initialization for solving \eqref{eqn:emloss4}, where $a^0_k \sim \mathcal{N}(0, \gamma^2)$ and $\vw_k^0 \sim \mathcal{N}(\vzero, \mI_d)$ with any $\gamma\in(0,1)$. Let $C_d := \Exp\norm{\vw}_1^{12}<+\infty$ with $\vw\sim \mathcal{N}(\vzero,\mI_d)$ and $\lambda_S$ be a positive constant in Assumption \ref{assump..LambdaMin}. For any $\delta\in(0,1)$, if
    \begin{align}
        \width\geq\max\Bigg\{ & \frac{512n^4M^4 C_d}{\lambda_S^2\delta}, \frac{200\sqrt{2}Md^3n\log(4\width(d+1)/\delta)\sqrt{\RS(\vtheta^0)}}{\lambda_S}, \\
                         & \frac{2^{23}M^3d^9n^2(\log(4\width(d+1)/\delta))^{4}\sqrt{\RS(\vtheta^0)}}{\lambda_S^2}\Bigg\},
    \end{align}
    then with probability at least $1-\delta$ over the random initialization $\vtheta^0$, we have, for all $t\geq 0$,
    \begin{equation*}
        \RS(\vtheta(t))\leq\exp\left(-\frac{\width\lambda_S t}{n}\right)\RS(\vtheta^0).
    \end{equation*}
\end{thm}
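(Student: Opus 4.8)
\noindent\emph{Proof strategy.}
The plan is to run the neural tangent kernel argument of \cite{du2018gradient}, adapted to the cubic activation $\sigma(z)=\max\{0,\tfrac16 z^3\}$ and the second-order operator $\fL$. Write $f(\vx;\vtheta):=\fL\phi(\vx;\vtheta)=\sum_{k=1}^{\width}a_k\bigl[\vw_k^\T\mA(\vx)\vw_k\,\sigma''(\vw_k^\T\vx)+\vb(\vx)^\T\vw_k\,\sigma'(\vw_k^\T\vx)+c(\vx)\sigma(\vw_k^\T\vx)\bigr]$, which is linear in each $a_k$ and positively homogeneous of degree $3$ in each $\vw_k$, and recall $\sigma''=\ReLU$, $\sigma'=\tfrac12(\ReLU)^2$. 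Let $\bm{e}(t)\in\sR^n$ have entries $e_i(t)=f(\vx_i;\vtheta(t))-f(\vx_i)$, so $\RS(\vtheta(t))=\tfrac1{2n}\|\bm{e}(t)\|^2$; under gradient flow $\dot{\vtheta}=-\nabla_\vtheta\RS$ the residual obeys $\dot{\bm{e}}=-\tfrac1n\bm{G}(\vtheta)\bm{e}$ with $\bm{G}(\vtheta)_{ij}=\langle\nabla_\vtheta f(\vx_i;\vtheta),\nabla_\vtheta f(\vx_j;\vtheta)\rangle$, whence $\tfrac{d}{dt}\RS(\vtheta(t))\le-\tfrac2n\lambda_{\min}(\bm{G}(\vtheta(t)))\,\RS(\vtheta(t))$. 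It therefore suffices to show $\lambda_{\min}(\bm{G}(\vtheta(t)))\ge\tfrac12\width\lambda_S$ for all $t\ge0$, after which integrating the inequality gives the claimed decay. (For discrete GD one repeats the estimates with the step size absorbed into $t$ and an extra second-order term controlled by the same smoothness bounds.)

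At initialization $\bm{G}^0:=\bm{G}(\vtheta^0)$ is a sum of $\width$ i.i.d.\ per-neuron contributions whose expectation equals $\width\bm{K}$ for a fixed kernel matrix $\bm{K}$ with $\lambda_{\min}(\bm{K})\ge\lambda_S$ (Assumption \ref{assump..LambdaMin}), so $\lambda_{\min}(\Exp\bm{G}^0)\ge\width\lambda_S$. Since each feature gradient carries a factor $|a_k|\,\|\vw_k\|_1^2$, the entries of $\bm{G}^0$ have finite second moment controlled by $M$ and $C_d=\Exp\|\vw\|_1^{12}$ (the $12$th moment appears because a variance estimate squares a product of two degree-two-in-$\vw$ quantities), and a Chebyshev/Markov bound on $\Exp\|\bm{G}^0-\Exp\bm{G}^0\|_F^2$ yields $\|\bm{G}^0-\Exp\bm{G}^0\|_2\le\tfrac14\width\lambda_S$ — hence $\lambda_{\min}(\bm{G}^0)\ge\tfrac34\width\lambda_S$ — with probability $\ge1-\delta/3$ precisely under the first width lower bound $\width\gtrsim n^4M^4C_d/(\lambda_S^2\delta)$. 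Separately, Gaussian tails with a union bound over the $\width(d+1)$ coordinates of $\vtheta^0$ give $\max_k|a_k^0|\lesssim\gamma\sqrt{\log(4\width(d+1)/\delta)}$ and $\max_k\|\vw_k^0\|_\infty\lesssim\sqrt{\log(4\width(d+1)/\delta)}$ with probability $\ge1-\delta/3$; these control the polynomial growth of the feature map and are the source of the logarithmic factors in the other two width bounds.

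Next I would prove a perturbation bound: whenever $\max_k|a_k-a_k^0|\le R$ and $\max_k\|\vw_k-\vw_k^0\|_1\le R$ for a radius $R$ equal to $\lambda_S$ divided by a polynomial in $M$, $d$ and $\log(4\width(d+1)/\delta)$, one has $\|\bm{G}(\vtheta)-\bm{G}^0\|_2\le\tfrac14\width\lambda_S$ and therefore $\lambda_{\min}(\bm{G}(\vtheta))\ge\tfrac12\width\lambda_S$. This uses that $\ReLU$ and $\tfrac12(\ReLU)^2$ are locally Lipschitz with polynomially growing constants, while the only genuine non-smoothness — the step-function derivative that appears when $\nabla_{\vw_k}$ differentiates the $\sigma''$-term — affects neuron $k$ (for a fixed sample $\vx_i$) only if $|(\vw_k^0)^\T\vx_i|\lesssim R$, an event of probability $O(R)$, so that at most an $O(\width R)$ fraction of neurons switch activation pattern; summing the smooth and the switched contributions, weighted by the uniform bounds $|A_{\alpha\beta}|,|b_\alpha|,|c|\le M$ from Assumption \ref{asp:bound} and by the initialization tail bounds, gives the admissible $R$. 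The argument closes by a continuity bootstrap: put $T^\ast=\sup\{t\ge0:\RS(\vtheta(s))\le e^{-\width\lambda_S s/n}\RS(\vtheta^0)\text{ for all }s\le t\}$; on $[0,T^\ast]$ the loss decays geometrically, so the per-neuron velocities $|\dot a_k|$ and $\|\dot{\vw}_k\|_1$ are each at most $\sqrt{\RS(\vtheta(s))}\le e^{-\width\lambda_S s/(2n)}\sqrt{\RS(\vtheta^0)}$ times a factor polynomial in $M$, $d$, $\max_k\|\vw_k^0\|_\infty$ and $\max_k|a_k^0|$ (using that the neurons remain in the ball), so integrating gives $\max_k\{|a_k(t)-a_k^0|,\|\vw_k(t)-\vw_k^0\|_1\}\lesssim\tfrac{n}{\width\lambda_S}\sqrt{\RS(\vtheta^0)}\cdot(\text{poly in }M,d,\log)$, which is $\le R$ precisely under the second and third width lower bounds (the milder bound, linear in $n$, matching the feature-growth term; the heavier bound, with $n^2$, $(\log(4\width(d+1)/\delta))^4$ and $d^9$, matching the pattern-flip count). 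Hence the parameters never exit the good ball on $[0,T^\ast]$, so $\lambda_{\min}(\bm{G})\ge\tfrac12\width\lambda_S$ and $\tfrac{d}{dt}\RS\le-\tfrac{\width\lambda_S}{n}\RS$ there, which forces the defining inequality to be strict at $T^\ast$; therefore $T^\ast=\infty$ and $\RS(\vtheta(t))\le e^{-\width\lambda_S t/n}\RS(\vtheta^0)$ for all $t\ge0$.

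The main obstacle is the perturbation estimate on $\bm{G}$. Unlike the $\ReLU$-regression analyses of \cite{du2018gradient,Simon18}, here the feature map $\nabla_\vtheta f$ is both unbounded (it grows like $|a_k|\,\|\vw_k\|^2$) and non-smooth in $\vw_k$ (its $\sigma''$-term has a step-function derivative), and $\mA(\vx),\vb(\vx),c(\vx)$ are assumed only bounded, not smooth; one must therefore work on the high-probability event where $\max_k\|\vw_k^0\|_\infty$ and $\max_k|a_k^0|$ are logarithmically small, bound the fraction of neurons whose pattern flips under an $R$-perturbation, and keep every estimate uniform in $\vx\in[0,1]^d$, which is what inflates the exponents ($d^9$, $(\log(4\width(d+1)/\delta))^4$, $M^3$) in the third width requirement.
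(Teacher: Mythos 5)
Your overall architecture — NTK-style Gram matrix tracking, concentration at initialization via Chebyshev with $C_d$, a parameter-movement bound from geometric loss decay, and a continuity bootstrap showing the parameters never leave a good ball — matches the paper's proof. However, two key simplifications that the paper relies on are absent from your sketch, and one of your explanatory claims is wrong. First, the paper never tracks the full Gram matrix $\bm{G}=\bm{G}^{(a)}+\bm{G}^{(w)}$: since $\bm{G}^{(w)}\succeq 0$, the inequality $\ve^\T\bm{G}\ve\geq\ve^\T\bm{G}^{(a)}\ve$ lets one work with $\bm{G}^{(a)}$ alone, and $\bm{G}^{(a)}$ has the crucial property that it depends only on $\{\vw_k\}$ and not at all on $\{a_k\}$ (because $f(\vx;\vtheta)$ is linear in each $a_k$, so $\nabla_{a_k}f$ is a function of $\vw_k$ only). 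This makes the perturbation estimate a function of $\vw$-movement alone; tracking $\bm{G}^{(w)}$ would force you to handle $a_k^2$-weighted terms whose initial magnitude is $O(\gamma^2)$ and can change by multiplicative factors, a genuinely messier route. Your own description — ``each feature gradient carries a factor $|a_k|\,\|\vw_k\|_1^2$'' and the ``product of two degree-two-in-$\vw$ quantities'' for the variance — describes the $\vw$-gradient, not the $a$-gradient; the paper's $C_d=\Exp\|\vw\|_1^{12}$ comes from $(\nabla_{a_k}f)^2\cdot(\nabla_{a_k}f)^2\lesssim M^4\|\vw_k\|_1^{12}$, two degree-three factors squared. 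Second, and more importantly, there is no need for a pattern-flip count here: the per-neuron kernel contribution $g^{(a)}(\vw;\vx_i,\vx_j)=h_i(\vw)h_j(\vw)$ with $h_i$ built from $\sigma=\max\{0,\frac16 z^3\}$, $\sigma'$, $\sigma''$ is globally Lipschitz in $\vw$ (the step function $\sigma^{(3)}$ only shows up bounded by $1$ inside the Lipschitz constant), so the paper simply applies the mean-value inequality with $\|\nabla g^{(a)}\|_\infty\leq 20M^2\|\vw\|_1^5$ and then uses the uniform bounds $\|\vw_k\|_1\leq 2d\eta$ and $\|\vw_k(t^*)-\vw_k(0)\|_1\leq dq$. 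That is where the $d^9$, $M^3$ and $(\log(4\width(d+1)/\delta))^4$ in the third width requirement come from — $\eta^5 q$ times the polynomial Lipschitz prefactor — not from counting neurons whose activation pattern flips. Your proposed measure-theoretic argument (probability $O(R)$ of flipping, $O(\width R)$ flipped neurons) is the right tool in the classical ReLU-regression NTK analyses you cite, where the per-neuron Gram contribution itself is discontinuous; here it is unnecessary and misattributes the source of the exponents. With these two corrections — restrict attention to $\bm{G}^{(a)}$ and replace the flip-count by a direct Lipschitz bound on $g^{(a)}$ — your sketch collapses onto the paper's proof.
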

\begin{rmk}
    For the estimate of $R_S(\vtheta^0)$, see Lemma \ref{lem2}. In particular, if $\gamma=O(\frac{1}{\sqrt{\width}(\log \width)^2})$, then $R_S(\vtheta^0)=O(1)$. One may also use the Anti-Symmetrical Initialization (ASI) \cite{zhang2019}, a general but simple trick that ensures $R_S(\vtheta^0)\leq \frac{1}{2}$.
\end{rmk}
Second, we prove that the a posteriori generalization error $\abs{\RD(\vtheta)-\RS(\vtheta)}$ is bounded by $O\left(\frac{\|\vtheta\|_{\fP}^2\log\norm{\vtheta}_{\fP}}{\sqrt{n}}\right)$, where $\|\vtheta\|_{\fP}$ is the path norm introduced in Definition \ref{prop:2L}, and the a priori generalization error $\RD(\vtheta_{S,\lambda})$ is bounded by $O\left( \frac{\norm{f}_{\fB}^2}{\width} \right)+ O\left( \frac{\norm{f}_{\fB}^2\log\norm{f}_{\fB}}{\sqrt{n}} \right)$, where $\norm{f}_{\fB}$ is the Barron norm for Barron-type functions $f(\vx)$ introduced in Definition \ref{def:bfun}, and $\vtheta_{S,\lambda}$ is a global minimizer of a regularized empirical loss using the path norm. Our results of the generalization errors can be summarized in Theorems \ref{thm:gen1} and \ref{thm:gen2} below.

\begin{thm}[A posteriori generalization bound]\label{thm:gen1}
    For any $\delta\in(0,1)$, with probability at least $1-\delta$ over the choice of random samples $S:=\{\vx_i\}_{i=1}^n$ in \eqref{eqn:emloss4}, for any two-layer neural network $\phi(\vx;\vtheta)$ in \eqref{eqn:NN}, we have
    \begin{equation*}
        \abs{\RD(\vtheta)-\RS(\vtheta)}
        \leq \frac{(\norm{\vtheta}_{\fP}+1)^2}{\sqrt{n}}2M^2(14d^2\sqrt{2\log(2d)}+\log[\pi(\norm{\vtheta}_{\fP}+1)]+\sqrt{2\log(1/3\delta)}).
    \end{equation*}
\end{thm}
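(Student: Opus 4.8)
\emph{Proof proposal.} The plan is to deduce the uniform-in-$\vtheta$ bound from a countable union bound over path-norm scales. For each integer $Q\geq1$ set
\[
\mathcal{F}_Q:=\bigl\{\vx\mapsto\ell\bigl(\fL\phi(\vx;\vtheta),f(\vx)\bigr):\ \norm{\vtheta}_{\fP}\leq Q\bigr\},
\]
control its uniform bound $B_Q$ and its empirical Rademacher complexity, apply Theorem~\ref{thm..RademacherComplexityGeneralizationGap} to $\mathcal{F}_Q$ with confidence $1-\delta_Q$, and choose the $\delta_Q$ so that $\sum_{Q\geq1}\delta_Q=\delta$. Any $\vtheta$ lies in $\mathcal{F}_Q$ for $Q:=\max\{1,\lceil\norm{\vtheta}_{\fP}\rceil\}\leq\norm{\vtheta}_{\fP}+1$, so this produces a bound of the advertised shape once the per-scale estimate is of order $M^2Q^2/\sqrt n$ times the bracketed logarithmic factors. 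Since $\Omega=[0,1]^d$ forces $\norm{\vx_i}_\infty\leq1$, all of the Rademacher estimates below hold deterministically in $S$; it therefore suffices to use the first inequality of Theorem~\ref{thm..RademacherComplexityGeneralizationGap}.

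\emph{Boundedness and reduction to $\fL\phi$.} From $\norm{\vx}_\infty\leq1$ one has $|\vw_k^\T\vx|\leq\norm{\vw_k}_1$, and Assumption~\ref{asp:bound} gives $|\vw_k^\T\mA(\vx)\vw_k|\leq M\norm{\vw_k}_1^2$, $|\vb^\T(\vx)\vw_k|\leq M\norm{\vw_k}_1$, and $|c(\vx)|\leq M$. Combining these with the homogeneities $\sigma(\lambda z)=\lambda^3\sigma(z)$, $\sigma'(\lambda z)=\lambda^2\sigma'(z)$, $\sigma''(\lambda z)=\lambda\sigma''(z)$ ($\lambda>0$) and the bounds $|\sigma(z)|\leq\tfrac16$, $|\sigma'(z)|\leq\tfrac12$, $|\sigma''(z)|\leq1$ for $|z|\leq1$, the three terms carrying index $k$ in $\fL\phi(\vx;\vtheta)=\sum_k a_k[\vw_k^\T\mA(\vx)\vw_k\sigma''(\vw_k^\T\vx)+\vb^\T(\vx)\vw_k\sigma'(\vw_k^\T\vx)+c(\vx)\sigma(\vw_k^\T\vx)]$ together contribute at most $M(1+\tfrac12+\tfrac16)|a_k|\norm{\vw_k}_1^3$ in absolute value, whence $|\fL\phi(\vx;\vtheta)|\leq\tfrac{5M}{3}\norm{\vtheta}_{\fP}$; with $|f|\leq1$ the functions in $\mathcal{F}_Q$ are nonnegative and bounded by $B_Q:=\tfrac12(\tfrac{5M}{3}Q+1)^2=O(M^2Q^2)$. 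Next, on $|y|\leq\tfrac{5M}{3}Q$ the loss $y\mapsto\tfrac12|y-f(\vx)|^2$ is $(\tfrac{5M}{3}Q+1)$-Lipschitz; replacing it outside this window by a global Lipschitz extension does not change the values attained on $\mathcal{F}_Q$, so the contraction Lemma~\ref{lem..RademacherComplexityContraction} gives $\Rad_S(\mathcal{F}_Q)\leq(\tfrac{5M}{3}Q+1)\Rad_S(\mathcal{G}_Q)$, where $\mathcal{G}_Q:=\{\vx\mapsto\fL\phi(\vx;\vtheta):\norm{\vtheta}_{\fP}\leq Q\}$.

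\emph{Rademacher complexity of $\mathcal{G}_Q$ --- the technical heart.} Each bracketed term of $\fL\phi$ is positive-homogeneous of degree three in $\vw_k$, so for $\vw_k\neq\vzero$ one may write $a_k[\cdots]=\sgn(a_k)\,|a_k|\norm{\vw_k}_1^3\,g(\hat\vw_k;\vx)$ with $\hat\vw_k=\vw_k/\norm{\vw_k}_1$ and $g(\hat\vw;\vx)=\hat\vw^\T\mA(\vx)\hat\vw\,\sigma''(\hat\vw^\T\vx)+\vb^\T(\vx)\hat\vw\,\sigma'(\hat\vw^\T\vx)+c(\vx)\sigma(\hat\vw^\T\vx)$. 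Since $\sum_k|a_k|\norm{\vw_k}_1^3\leq Q$, concentrating the whole path-norm budget on a single neuron (and optimizing its sign) yields
\[
\Rad_S(\mathcal{G}_Q)\ \leq\ \frac{Q}{n}\,\Exp_{\vtau}\sup_{\norm{\hat\vw}_1\leq1}\Abs{\sum_{i=1}^n\tau_i\,g(\hat\vw;\vx_i)}.
\]
Decompose $g$ into the $d^2+d+1$ monomial pieces $A_{\alpha\beta}(\vx)\hat w_\alpha\hat w_\beta\sigma''(\hat\vw^\T\vx)$, $b_\alpha(\vx)\hat w_\alpha\sigma'(\hat\vw^\T\vx)$, $c(\vx)\sigma(\hat\vw^\T\vx)$, and bound $\Exp_{\vtau}\sup_{\hat\vw}|\sum_i\tau_i(\text{piece})(\hat\vw;\vx_i)|$ for each separately. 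Each piece is a bounded sample-independent prefactor ($\hat w_\alpha\hat w_\beta$, $\hat w_\alpha$, or $1$, all of absolute value $\leq\norm{\hat\vw}_1\leq1$) times $\psi_i(\hat\vw^\T\vx_i)$, where $\psi_i(z)$ equals $A_{\alpha\beta}(\vx_i)\sigma''(z)$ (respectively $b_\alpha(\vx_i)\sigma'(z)$, $c(\vx_i)\sigma(z)$) --- a function which, after a cosmetic Lipschitz extension beyond $[-1,1]$, is globally $O(M)$-Lipschitz and vanishes at $0$; the prefactor does not depend on $i$, so it factors out of the sum and may be dropped inside the absolute value, leaving $\sum_i\tau_i\psi_i(\hat\vw^\T\vx_i)$. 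Symmetrizing (the supremum is nonnegative, being zero at $\hat\vw=\vzero$), then applying the contraction Lemma~\ref{lem..RademacherComplexityContraction}, and finally Lemma~\ref{lem..linear} (with the roles of samples and parameters interchanged and $\norm{\vx_i}_\infty\leq1$), reduces each piece to the $\ell^1$-ball linear class $\{\vx\mapsto\hat\vw^\T\vx:\norm{\hat\vw}_1\leq1\}$, whose empirical Rademacher complexity is at most $\sqrt{2\log(2d)/n}$. Summing the $d^2+d+1$ pieces gives $\Rad_S(\mathcal{G}_Q)\leq C_1MQd^2\sqrt{2\log(2d)/n}$ for an absolute constant $C_1$, hence $\Rad_S(\mathcal{F}_Q)\leq C_2M^2Q^2d^2\sqrt{2\log(2d)/n}$; this is the origin of the $14\,d^2\sqrt{2\log(2d)}$ term.

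\emph{Assembling the union bound, and the obstacle.} The first inequality of Theorem~\ref{thm..RademacherComplexityGeneralizationGap}, together with the deterministic Rademacher estimate just proved, gives for each $Q$, with probability at least $1-\delta_Q$,
\[
\sup_{\norm{\vtheta}_{\fP}\leq Q}\Abs{\RD(\vtheta)-\RS(\vtheta)}\ =\ O\!\Bigl(\tfrac{M^2Q^2}{\sqrt n}\bigl(d^2\sqrt{2\log(2d)}+\sqrt{\log(1/\delta_Q)}\bigr)\Bigr).
\]
Choosing $\delta_Q$ proportional to $\exp\!\bigl(-[\log(\pi(Q+1))+\sqrt{\log(1/3\delta)}]^2\bigr)$, with the proportionality constant fixed so that $\sum_{Q\geq1}\delta_Q\leq\delta$, makes $\sqrt{\log(1/\delta_Q)}\leq\log(\pi(Q+1))+\sqrt{\log(1/3\delta)}$; a union bound over $Q\in\N$, together with $Q\leq\norm{\vtheta}_{\fP}+1$, then yields exactly
\[
\Abs{\RD(\vtheta)-\RS(\vtheta)}\leq\frac{(\norm{\vtheta}_{\fP}+1)^2}{\sqrt n}\,2M^2\Bigl(14d^2\sqrt{2\log(2d)}+\log[\pi(\norm{\vtheta}_{\fP}+1)]+\sqrt{2\log(1/3\delta)}\Bigr)
\]
after collecting all absolute constants into the numeral $14$ and the prefactor $2M^2$. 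The main obstacle is the Rademacher bound for $\mathcal{G}_Q$: the second-order operator makes $\fL\phi$ genuinely cubic in each $\vw_k$, and its leading terms $\hat\vw^\T\mA(\vx)\hat\vw\,\sigma''(\hat\vw^\T\vx)$ are products of two parameter-dependent factors, neither of which is a plain Lipschitz image of $\hat\vw^\T\vx$; moreover $\mA,\vb,c$ are not assumed to be smooth, and no truncation of $\fL\phi$ is available (clipping $\phi$ leaves $\fL\phi$ unbounded since $\fL$ is second order, while clipping $\fL\phi$ replaces the target equation by a different one). The device of peeling off the sample-independent coordinates $\hat w_\alpha,\hat w_\beta$ by boundedness, absorbing the rough coefficients into the per-sample Lipschitz maps of the contraction lemma, and only afterwards invoking the $\ell^1$-linear-predictor estimate, is precisely the new Rademacher analysis the argument requires.
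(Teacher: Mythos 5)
Your proposal follows essentially the same route as the paper's proof: a scale-indexed union bound over path-norm balls $\{\norm{\vtheta}_{\fP}\leq Q\}$ with a countable sequence $\delta_Q$, the boundedness estimate $\abs{\fL\phi}\leq\tfrac{5M}{3}\norm{\vtheta}_{\fP}$, contraction to $\Rad_S(\{\fL\phi\})$, and then the Rademacher estimate (Lemma~\ref{lem..RademacherComplexityTwoLayerPathNorm}) via normalization $\hat\vw_k=\vw_k/\norm{\vw_k}_1$, concentrating the path-norm budget on a single neuron, splitting into the $d^2+d+1$ coefficient pieces, symmetrization (using vanishing at $\hat\vw=\vzero$), contraction against the $M$-Lipschitz per-sample maps $A_{\alpha\beta}(\vx_i)\sigma''(\cdot)$, etc., and the $\ell^1$-linear-predictor bound. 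The only cosmetic differences are in how the bounded prefactors are peeled off (the paper decouples $\hat\vw$ into three independent $\ell^1$-unit variables $\vu,\vp,\vq$ and then uses $\max_{\alpha,\beta}\leq\sum_{\alpha,\beta}$, whereas you keep $\hat\vw$ coupled and drop $\abs{\hat w_\alpha\hat w_\beta}\leq1$), how the lower-order terms are reduced to $\sigma''$ (the paper uses the identities $\sigma'(z)=\tfrac12 z\sigma''(z)$ and $\sigma(z)=\tfrac16 z^2\sigma''(z)$ rather than an ad hoc Lipschitz extension on $[-1,1]$), and the confidence split, where the paper's simpler $\delta_Q=6\delta/(\pi^2Q^2)$ together with $\sqrt{a+b}\leq\sqrt a+\sqrt b$ and $\sqrt a\leq a$ for $a\geq1$ directly yields the stated logarithmic term without your exponentially tuned $\delta_Q$ (whose normalizing constant would need its own check to avoid spoiling the $\sqrt{\log(1/\delta_Q)}$ bound).
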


\begin{thm}[A priori generalization bound]\label{thm:gen2}
    Suppose that $f(\vx)$ is in the Barron-type space $\fB([0,1]^d)$ and $\lambda\geq 4M^2[2+14d^2\sqrt{2\log(2d)}+\sqrt{2\log(2/3\delta)}]$. Let
    \begin{equation*}
        \vtheta_{S,\lambda}=\arg\min_{\vtheta}J_{S,\lambda}(\vtheta):=\RS(\vtheta)+\frac{\lambda}{\sqrt{n}}\norm{\vtheta}_{\fP}^2\log[\pi(\norm{\vtheta}_{\fP}+1)].
    \end{equation*}
    Then for any $\delta\in(0,1)$, with probability at least $1-\delta$ over the choice of random samples $S:=\{\vx_i\}_{i=1}^n$ in \eqref{eqn:emloss4}, we have
    \begin{align}
        \RD(\vtheta_{S,\lambda})
         & :=\Exp_{\vx\sim\fD}\tfrac{1}{2}(\fL \phi(\vx;\vtheta_{S,\lambda})-f(\vx))^2 \nonumber\\
         & \leq \frac{6M^2\norm{f}_{\fB}^2}{\width}+
        \frac{\norm{f}_{\fB}^2+1}{\sqrt{n}}(4\lambda+16M^2)\left\{\log[\pi(2\norm{f}_{\fB}+1)]+14d^2\sqrt{\log(2d)}+\sqrt{\log(2/3\delta)}\right\}.
    \end{align}
\end{thm}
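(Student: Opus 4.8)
The plan is an a priori (oracle-inequality) argument in the spirit of the Barron-space analysis of \cite{Weinan2019,e2019priori}, adapted to the PDE setting: combine an approximation estimate for Barron-type $f$, the a posteriori bound of Theorem \ref{thm:gen1}, and the minimality of $\vtheta_{S,\lambda}$, with the explicit lower bound on $\lambda$ being precisely what lets the a priori uncontrolled quantity $\norm{\vtheta_{S,\lambda}}_{\fP}$ cancel out. First I would establish the approximation lemma. The key observation is that, by the chain rule, $\fL\sigma(\vw^\T\vx)=\vw^\T\mA(\vx)\vw\,\sigma''(\vw^\T\vx)+\vb^\T(\vx)\vw\,\sigma'(\vw^\T\vx)+c(\vx)\,\sigma(\vw^\T\vx)$ is exactly the integrand appearing in Definition \ref{def:bfun}; so for $f\in\fB([0,1]^d)$, choosing $\rho\in\fP_f$ nearly attaining $\norm{f}_{\fB}$ gives $f(\vx)=\Exp_{(a,\vw)\sim\rho}\,a\,\fL\sigma(\vw^\T\vx)$. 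Drawing $(\tilde a_k,\vw_k)\sim\rho$ i.i.d.\ for $k\in[\width]$ and setting $a_k=\tilde a_k/\width$ produces $\vtheta^*$ for which $\fL\phi(\vx;\vtheta^*)=\frac1\width\sum_k\tilde a_k\,\fL\sigma(\vw_k^\T\vx)$ is an unbiased Monte-Carlo estimate of $f$.

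Using $\Omega=[0,1]^d$ (so $|\vw^\T\vx|\le\norm{\vw}_1$), the positive homogeneity of $\sigma$ (so $|\sigma(z)|\le\tfrac16|z|^3$, $|\sigma'(z)|\le\tfrac12 z^2$, $|\sigma''(z)|\le|z|$), and Assumption \ref{asp:bound}, I get $|\fL\sigma(\vw^\T\vx)|\le\tfrac53 M\norm{\vw}_1^3$, hence a variance bound $\Exp_\rho\RD(\vtheta^*)\le\frac{25M^2\norm{f}_{\fB}^2}{18\width}$; meanwhile $\Exp_\rho\norm{\vtheta^*}_{\fP}=\Exp_\rho|\tilde a|\,\norm{\vw}_1^3\le\norm{f}_{\fB}$ by Cauchy--Schwarz. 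A Markov-plus-union-bound argument over the product measure then selects a single $\vtheta^*$ (of width $\width$) with $\RD(\vtheta^*)\le\frac{6M^2\norm{f}_{\fB}^2}{\width}$ and $\norm{\vtheta^*}_{\fP}\le2\norm{f}_{\fB}$ simultaneously. This is the step I expect to be the main obstacle, for two reasons: getting the variance estimate with the sharp $M^2$ constant and clean $1/\width$ rate (where $\Omega=[0,1]^d$ and the positive homogeneity of $\sigma(z)=\max\{\tfrac16 z^3,0\}$ are essential, since no truncation of $\fL\phi$ is available), and producing a \emph{single} realization controlling both $\RD(\vtheta^*)$ and $\norm{\vtheta^*}_{\fP}$.

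Next I would run the oracle step. By minimality $J_{S,\lambda}(\vtheta_{S,\lambda})\le J_{S,\lambda}(\vtheta^*)$. Applying Theorem \ref{thm:gen1} with confidence $\delta/2$ (it is uniform in $\vtheta$) gives $\RD(\vtheta_{S,\lambda})-\RS(\vtheta_{S,\lambda})\le\mathrm{gap}(t)$, where $t=\norm{\vtheta_{S,\lambda}}_{\fP}$, $\mathrm{gap}(t):=\frac{(t+1)^2}{\sqrt n}2M^2 G(t)$, and $G(t):=14d^2\sqrt{2\log(2d)}+\log[\pi(t+1)]+\sqrt{2\log(2/3\delta)}$; and, with the other $\delta/2$ (Theorem \ref{thm:gen1} again, or a Hoeffding bound since $\vtheta^*$ is deterministic), $\RS(\vtheta^*)-\RD(\vtheta^*)\le\mathrm{gap}(2\norm{f}_{\fB})$ (as $\mathrm{gap}$ is increasing and $\norm{\vtheta^*}_{\fP}\le2\norm{f}_{\fB}$). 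On the intersection event (probability $\ge1-\delta$), with $\mathrm{pen}(t):=\frac{\lambda}{\sqrt n}t^2\log[\pi(t+1)]$,
\[
\RD(\vtheta_{S,\lambda})\le\RS(\vtheta_{S,\lambda})+\mathrm{gap}(t)=\bigl(J_{S,\lambda}(\vtheta_{S,\lambda})-\mathrm{pen}(t)\bigr)+\mathrm{gap}(t)\le J_{S,\lambda}(\vtheta^*)+\mathrm{gap}(t)-\mathrm{pen}(t).
\]
The hypothesis $\lambda\ge4M^2[2+14d^2\sqrt{2\log(2d)}+\sqrt{2\log(2/3\delta)}]$ is tailored so that $\mathrm{gap}(t)\le\mathrm{pen}(t)+(\text{a }t\text{-free }O(M^2/\sqrt n)\text{ term})$: split $(t+1)^2\le2t^2+2$; the $2t^2$-part is at most $\frac{4M^2}{\sqrt n}\bigl(\tfrac{C_1}{\log\pi}+1\bigr)t^2\log[\pi(t+1)]\le\mathrm{pen}(t)$ (using $\log[\pi(t+1)]\ge\log\pi>1$, with $C_1=14d^2\sqrt{2\log(2d)}+\sqrt{2\log(2/3\delta)}$), while the $+2$-part is reabsorbed by the slack in $\mathrm{pen}(t)$ when $t\ge1$ and is $O(M^2/\sqrt n)$ when $t<1$. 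Hence $\norm{\vtheta_{S,\lambda}}_{\fP}$ disappears.

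Finally, expanding $J_{S,\lambda}(\vtheta^*)\le\RD(\vtheta^*)+\mathrm{gap}(2\norm{f}_{\fB})+\frac{4\lambda\norm{f}_{\fB}^2}{\sqrt n}\log[\pi(2\norm{f}_{\fB}+1)]$, inserting the two approximation bounds $\RD(\vtheta^*)\le\frac{6M^2\norm{f}_{\fB}^2}{\width}$ and $\norm{\vtheta^*}_{\fP}\le2\norm{f}_{\fB}$, and collecting the $\sqrt2$'s and additive constants into the factor $4\lambda+16M^2$ and the bracketed $\log/\sqrt{\log}$ factors yields the stated bound. Besides the approximation lemma, the remaining delicate point is this last bookkeeping step: because nothing a priori bounds $\norm{\vtheta_{S,\lambda}}_{\fP}$, one must verify that the generalization-gap term at $\vtheta_{S,\lambda}$ is exactly swallowed by the negatively-signed penalty coming out of the oracle inequality, and it is this requirement that dictates the explicit lower bound imposed on $\lambda$.
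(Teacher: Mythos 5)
Your proposal is correct and follows essentially the same route as the paper's proof: an explicit Barron-space approximation lemma (via Monte-Carlo sampling of the representation $\rho$, Markov, and a union bound, yielding a single $\tilde{\vtheta}$ with $\RD(\tilde{\vtheta})\le 6M^2\norm{f}_\fB^2/\width$ and $\norm{\tilde{\vtheta}}_\fP\le 2\norm{f}_\fB$), followed by the oracle decomposition $\RD(\vtheta_{S,\lambda})\le \RD(\tilde{\vtheta})+[\RD(\vtheta_{S,\lambda})-J_{S,\lambda}(\vtheta_{S,\lambda})]+[J_{S,\lambda}(\tilde{\vtheta})-\RD(\tilde{\vtheta})]$ using minimality, with Theorem~\ref{thm:gen1} applied at confidence $\delta/2$ to each bracket and the lower bound on $\lambda$ making the penalty swallow the $\norm{\vtheta_{S,\lambda}}_\fP$-dependent part of the gap (your split $(t+1)^2\le 2t^2+2$ and the $t\gtrless 1$ case analysis for the ``$+2$'' term are exactly the paper's absorption step). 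The only minor differences are cosmetic: you track a slightly sharper constant $\tfrac{5}{3}M$ instead of $2M$ in the variance bound, and you note the optional Hoeffding alternative for the deterministic $\tilde{\vtheta}$, neither of which changes the structure of the argument.
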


The proof of Theorem \ref{thm:lcr} will be given in Section~\ref{sec:conv_gd} and the proofs of Theorems \ref{thm:gen1} and \ref{thm:gen2} will be presented in Section~\ref{sec:gen_err}.

\section{Global Convergence of Gradient Descent}\label{sec:conv_gd}

In this section, we will prove the global convergence of GD with a linear convergence rate for deep learning-based PDE solvers as stated in Theorem \ref{thm:lcr}. We will first summarize the notations and assumptions for the proof of Theorem \ref{thm:lcr} in Section \ref{sec:notelcr}. Several important lemmas will be proved in Section \ref{sec:lemlcr}. Finally, Theorem \ref{thm:lcr} is proved in Section \ref{sec:thmlcr}.

\subsection{Notations and Main Ideas}
\label{sec:notelcr}


Let us first summarize the notations and assumptions used in the proof of Theorem \ref{thm:lcr}. 

Recall that we use the two-layer neural network $\phi(\vx;\vtheta)$ in \eqref{eqn:NN} with $\vtheta=\mathrm{vec}\{a_k,\vw_k\}_{k=1}^\width$. In the GD iteration, we use $t$ to denote the iteration or the artificial time variable in the gradient flow. Hence, we define the following notations for the evolution of parameters at time $t$:
\begin{equation*}
     a^t_k 
     := a_k(t), \quad 
     \vw_k^t 
     := \vw_k(t),\quad
     \vtheta^t 
     := \vtheta(t) := \mathrm{vec}{\{a_k^t, \vw_k^t\}}_{k=1}^\width.
\end{equation*}
In the analysis, we also use $ \bar{a}^t := \bar{a}(t):=\gamma^{-1}a_k(t)$ with $0<\gamma<1$, e.g., $\gamma=\frac{1}{\sqrt{\width}}$ or $\gamma=\frac{1}{\width}$. $\bar{\vtheta}(t)$ means $\mathrm{vec}{\{\bar{a}_k^t, \vw_k^t\}}_{k=1}^\width$. Similarly, we can introduce $t$ to other functions or variables depending on $\vtheta(t)$. When the dependency of $t$ is clear, we will drop the index $t$. In the initialization of GD, we set 
\begin{equation}\label{eqn:init}
     a^0_k 
     := a_k(0)\sim \mathcal{N}(0, \gamma^2), \quad 
     \vw_k^0 
     := \vw_k(0)\sim \mathcal{N}(\vzero, \mI_d),\quad
     \vtheta^0 
     := \vtheta(0) := \mathrm{vec}{\{a_k^0, \vw_k^0\}}_{k=1}^\width.
\end{equation}

Note that we use $\sigma(x)=\max\{\frac{1}{6}x^3,0\}$ as the activation of our two-layer neural network. Therefore, $\sigma'(x)=\max\{\frac{1}{2}x^2,0\}$, and $\sigma''(x)=\ReLU(x)=\max\{x,0\}$. For simplicity, we define
\begin{eqnarray}\label{eqn:NN2}
    f_{\vtheta}(\vx)
     & := &f(\vx;\vtheta):=\fL \phi(\vx;\vtheta) \nonumber\\
     & =& \sum_{k=1}^\width a_k[\vw_k^\T\mA(\vx)\vw_k\sigma''(\vw_k^\T\vx)+\vb^\T(\vx)\vw_k\sigma'(\vw_k^\T\vx)+c(\vx)\sigma(\vw_k^\T\vx)],
\end{eqnarray}
which can be treated as a special two-layer neural network for a regression problem $f_{\vtheta}(\vx)\approx f(\vx)$.

For simplicity, we denote $e_i = f_{\vtheta}(\vx_i) - f(\vx_i)$ for $i\in[n]$ and $\ve = (e_1, e_2, \ldots, e_n)^{\T}$. Then the empirical risk can be written as
\begin{equation*}
    \RS(\vtheta) = \frac{1}{2n}\sum_{i=1}^n{\left(f_{\vtheta}(\vx_i) - f(\vx_i)\right)}^2 = \frac{1}{2n}\ve^{\T}\ve.
\end{equation*}
Hence, the GD dynamics is
\begin{equation}\label{eqn:gd_dyn}
    \dot{\vtheta} = -\nabla_{\vtheta}\RS(\vtheta),
\end{equation}
or equivalently in terms of $a_k$ and $\vw_k$ as follows:
\begin{align}
    \dot{a}_k = -\nabla_{a_k}\RS(\vtheta) 
    &= -\frac{1}{n}\sum_{i=1}^n e_i \left[\vw_k^{\T}\mA(\vx_i)\vw_k\sigma''(\vw_k^\T\vx_i) + \vb^{\T}(\vx_i)\vw_k\sigma'(\vw_k^\T\vx_i) + c(\vx_i)\sigma(\vw_k^\T\vx_i)\right],\nonumber\\
    \dot{\vw}_k = -\nabla_{\vw_k}\RS(\vtheta) 
    &= -\frac{1}{n}\sum_{i=1}^n e_i a_k\Big[2\mA(\vx_i)\vw_k\sigma''(\vw_k^\T\vx_i) + \vw_k^{\T}\mA(\vx_i)\vw_k\sigma^{(3)}(\vw_k^\T\vx_i)\vx_i\nonumber\\
    &~~~~+ \sigma'(\vw_k^\T\vx_i)\vb(\vx_i) + \vb^{\T}(\vx_i)\vw_k\sigma''(\vw_k^\T\vx_i)\vx_i + c(\vw_i)\sigma'(\vw_k^\T\vx_i)\vx_i\Big].\nonumber
\end{align}

Adopting the neuron tangent kernel point of view \cite{Arthur18}, in the case of a two-layer neural network with an infinite width, the corresponding kernels $k^{(a)}$ for parameters in the last linear transform and $k^{(w)}$ for parameters in the first layer are functions from $\Omega\times\Omega$ to $\sR$ defined by
\begin{align}
    k^{(a)}(\vx,\vx') 
    &:= \Exp_{\vw\sim \mathcal{N}(\vzero,\mI_d)}g^{(a)}(\vw;\vx,\vx'),\nonumber\\
    k^{(w)}(\vx,\vx')
    &:= \Exp_{(a,\vw)\sim \mathcal{N}(\vzero,\mI_{d+1})}g^{(w)}(a,\vw;\vx,\vx'),\nonumber
\end{align}
where
\begin{align}
    g^{(a)}(\vw;\vx,\vx')
    &:= \left[\vw^{\T}\mA(\vx)\vw\sigma''(\vw^\T\vx) + \vb^{\T}(\vx)\vw\sigma'(\vw^\T\vx) + c(\vx)\sigma(\vw^\T\vx)\right]\nonumber\\
    &~~~~\cdot\left[\vw^{\T}\mA(\vx')\vw\sigma''(\vw^\T\vx') + \vb^{\T}(\vx')\vw\sigma'(\vw^\T\vx') + c(\vx')\sigma(\vw^\T\vx')\right],\nonumber\\
    g^{(w)}(a,\vw;\vx,\vx')
    &:= a^2\big[2\mA(\vx)\vw\sigma''(\vw^\T\vx)+\vw^\T\mA(\vx)\vw\sigma^{(3)}(\vw^\T\vx)\vx+\sigma'(\vw^\T\vx)\vb(\vx)\nonumber\\&~~~~+\vb^\T(\vx)\vw\sigma''(\vw^\T\vx)\vx +c(\vw)\sigma'(\vw^\T\vx)\vx\big]\cdot
    \big[2\mA(\vx')\vw\sigma''(\vw^\T\vx')\nonumber\\
    &~~~~+\vw^\T\mA(\vx')\vw\sigma^{(3)}(\vw^\T\vx')\vx'+\sigma'(\vw^\T\vx')\vb(\vx')\nonumber\nonumber\\
    &~~~~+\vb^\T(\vx')\vw\sigma''(\vw^\T\vx')\vx'+c(\vw)\sigma'(\vw^\T\vx')\vx'\big].\nonumber
\end{align}
These kernels evaluated at $n\times n$ pairs of samples lead to $n\times n$ Gram matrices $\mK^{(a)}$ and $\mK^{(\vw)}$ with $K^{(a)}_{ij} = k^{(a)}(\vx_i,\vx_j)$ and $K^{(w)}_{ij} = k^{(w)}(\vx_i,\vx_j)$, respectively. Our analysis requires the matrix $\mK^{(a)}$ to be positive definite, which has been verified for regression problems under mild conditions on random training data $S=\{\vx_i\}_{i=1}^n$ and can be generalized to our case. Hence, we assume this as follows for simplicity.
\begin{assump}\label{assump..LambdaMin}
    We assume that
    \begin{equation*}
        \lambda_S := \lambda_{\min}\left(\mK^{(a)}\right) > 0.
    \end{equation*}
\end{assump}
For a two-layer neural network with $\width$ neurons, the $n\times n$ Gram matrix $\mG(\vtheta)=\mG^{(a)}(\vtheta)+\mG^{(w)}(\vtheta)$ is given by the following expressions for the $(i,j)$-th entry
\begin{align*}
    \mG_{ij}^{(a)}(\vtheta) 
    &:= \frac{1}{\width}\sum_{k=1}^\width g^{(a)}(\vw_k;\vx_i,\vx_j),\\
    \mG_{ij}^{(w)}(\vtheta)
    &:= \frac{1}{\width}\sum_{k=1}^\width g^{(w)}(a_k,\vw_k;\vx_i,\vx_j).\nonumber
\end{align*}
Clearly, $\mG^{(a)}(\vtheta)$ and $\mG^{(w)}(\vtheta)$ are both positive semi-definite for any $\vtheta$. By using the Gram matrix $\mG(\vtheta)$, we have the following evolution equations to understand the dynamics of GD:
\begin{equation*}
    \frac{\D }{\D t}f_{\vtheta}(\vx_i)
    =-\frac{1}{n}\sum_{j=1}^n\mG_{ij}(\vtheta)(f_{\vtheta}(\vx_j)-f(\vx_j))
\end{equation*}
and
\begin{equation}\label{eqn:boundS}
        \frac{\D}{\D t}\RS(\vtheta) = - \norm{\nabla_{\vtheta}\RS(\vtheta)}^2_2 =-\frac{\width}{n^2}\ve^{\T}\mG(\vtheta) \ve \leq - \frac{\width}{n^2}\ve^{\T}\mG^{(a)}(\vtheta)\ve.
\end{equation}

Our goal is to show that the above evolution equation has a solution $f_{\vtheta}(\vx_i)$ converging to $f(\vx_i)$ for all training samples $\vx_i$, or equivalently, to show that $\RS(\vtheta)$ converges to zero. These goals are true if the smallest eigenvalue $    \lambda_{\min}\left(\mG^{(a)}(\vtheta)\right)$ of $\mG^{(a)}(\vtheta)$ has a positive lower bound uniformly in $t$, since in this case we can solve \eqref{eqn:boundS} and bound $\RS(\vtheta)$ with a function in $t$ converging to zero when $t\rightarrow\infty$ as shown in Lemma \ref{lem:exp_RS}. In fact, a uniform lower bound of $    \lambda_{\min}\left(\mG^{(a)}(\vtheta)\right)$ can be $\frac{1}{2}\lambda_S$, which can be proved in the following three steps:

\begin{itemize}
    \item (\textbf{Initial phase}) By Assumption \ref{assump..LambdaMin} of $\mK^{(a)}$, we can show $    \lambda_{\min}\left(\mG^{(a)}(\vtheta(0))\right)\approx \lambda_S$ in Lemma \ref{lem:lambda_min} using the observation that $\mK^{(a)}_{ij}$ is the mean of $g(\vw;\vx_i,\vx_j)$ over the normal random variable $\vw$, while $\mG^{(a)}_{ij}(\vtheta(0))$ is the mean of $g(\vw;\vx_i,\vx_j)$ with $\width$ independent realizations. 
    \item (\textbf{Evolution phase}) The GD dynamics results in $\vtheta(t)\approx \vtheta(0)$ under the assumption of over-parametrization as shown in Lemma \ref{prop:a_w}, which indicates that
    \[
    \lambda_{\min}\left(\mG^{(a)}(\vtheta(0))\right)\approx \lambda_{\min}\left(\mG^{(a)}(\vtheta(t))\right).
    \]
    \item (\textbf{Final phase}) To show the uniform bound $\lambda_{\min}\left(\mG^{(a)}(\vtheta(t))\right)\geq \frac{1}{2}\lambda_S$ for all $t\geq 0$, we introduce a stopping time $t^*$ via
\begin{equation}\label{eqn:ts}
    t^* = \inf\{t \mid \vtheta(t)\notin \mathcal{M}(\vtheta^0)\},
\end{equation}
where
\begin{equation}\label{eqn:Mtheta}
    \mathcal{M}(\vtheta^0) := \left\{\vtheta \mid \norm{\mG^{(a)}(\vtheta) - \mG^{(a)}(\vtheta^0)}_{\mathrm{F}}\leq \frac{1}{4}\lambda_S\right\},
\end{equation}
and show that $t^*$ is in fact equal to infinity in the final proof of Theorem \ref{thm:lcr} in Section \ref{sec:thmlcr}. 
\end{itemize}

\subsection{Proofs of Lemmas for Theorem \ref{thm:lcr}}
\label{sec:lemlcr}

In this subsection, we will prove several lemmas in preparation for the proof of Theorem \ref{thm:lcr}.

\begin{lem}\label{lem1}
    For any $\delta\in(0,1)$ with probability at least $1-\delta$ over the random initialization in \eqref{eqn:init}, we have
    \begin{equation}\label{eqn:lem1}
        \begin{aligned}
            \max\limits_{k\in[\width]}\left\{\abs{\bar{a}_k^0},\; \norm{\vw^0_k}_\infty\right\}
             & \leq\sqrt{2
            \log\frac{2\width(d+1)}{\delta}},      \\
            \max\limits_{k\in[\width]}\left\{\abs{a_k^0}\right\}
             & \leq \gamma\sqrt{2
                \log\frac{2\width(d+1)}{\delta}}.
        \end{aligned}
    \end{equation}
\end{lem}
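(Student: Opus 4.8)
The statement to prove is Lemma \ref{lem1}, a concentration bound on the initial parameters of the two-layer network.

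\textbf{Approach.} The plan is a routine union-bound argument over the Gaussian tail. At initialization we have $a_k^0 \sim \mathcal{N}(0,\gamma^2)$ and $\vw_k^0 \sim \mathcal{N}(\vzero, \mI_d)$, all mutually independent, so that $\bar a_k^0 = \gamma^{-1} a_k^0 \sim \mathcal{N}(0,1)$ and each coordinate $(\vw_k^0)_j \sim \mathcal{N}(0,1)$. In total this is a collection of $\width(d+1)$ i.i.d.\ standard Gaussian random variables. We want to control the maximum absolute value of all of them simultaneously.

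\textbf{Key steps.} First I would recall the standard Gaussian tail bound: for $X \sim \mathcal{N}(0,1)$ and any $s > 0$, $\Prob(\abs{X} > s) \le 2 e^{-s^2/2}$. Second, I apply a union bound over the $\width(d+1)$ standard Gaussians $\{\bar a_k^0\}_{k\in[\width]} \cup \{(\vw_k^0)_j\}_{k\in[\width], j\in[d]}$: the probability that at least one exceeds $s$ in absolute value is at most $2\width(d+1) e^{-s^2/2}$. Third, I choose $s = \sqrt{2\log(2\width(d+1)/\delta)}$, so that $2\width(d+1)e^{-s^2/2} = 2\width(d+1) \cdot \frac{\delta}{2\width(d+1)} = \delta$. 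Hence with probability at least $1-\delta$, every one of these variables is bounded by $s$ in absolute value, which gives the first inequality $\max_{k\in[\width]}\{\abs{\bar a_k^0}, \norm{\vw_k^0}_\infty\} \le s$ since $\norm{\vw_k^0}_\infty = \max_{j\in[d]} \abs{(\vw_k^0)_j}$. Finally, the second inequality follows immediately by multiplying the bound on $\abs{\bar a_k^0}$ by $\gamma$, since $a_k^0 = \gamma \bar a_k^0$, so $\abs{a_k^0} = \gamma \abs{\bar a_k^0} \le \gamma s$ on the same event.

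\textbf{Main obstacle.} There is essentially no obstacle here — the lemma is a textbook maximal-inequality for Gaussians, and the only mild care needed is (i) to count the total number of scalar Gaussians correctly as $\width(d+1)$ (one for each $\bar a_k^0$ plus $d$ for each $\vw_k^0$), and (ii) to make sure the same high-probability event simultaneously yields both displayed inequalities, which it does because the second is just a rescaling of the bound already established for $\bar a_k^0$ on that event.
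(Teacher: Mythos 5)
Your proof is correct and follows the same union-bound-over-Gaussian-tails argument as the paper: tail bound $\Prob(\abs{X}>\eps)\le 2e^{-\eps^2/2}$, union over the $\width(d+1)$ standard normals, choose $\eps=\sqrt{2\log(2\width(d+1)/\delta)}$, and rescale by $\gamma$ for the second display.
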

\begin{proof}
    If $\rX \sim \mathcal{N}(0, 1)$, then $\Prob(\abs{\rX} > \eps) \leq 2\E^{-\frac{1}{2}\eps^2}$ for all $\eps > 0$. Since $\bar{a}^0_k\sim \mathcal{N}(0,1)$, ${(\vw_k^0)}_{\alpha}\sim \mathcal{N}(0,1)$ for $k\in[\width], \alpha \in[d]$, and they are all independent, by setting
    \begin{equation*}
        \eps = \sqrt{2\log\frac{2\width(d+1)}{\delta}},
    \end{equation*}
    one can obtain
    \begin{equation*}
        \begin{aligned}
            \Prob\left(\max\limits_{k\in[\width]}\left\{\abs{\bar{a}_k^0},\norm{\vw^0_k}_\infty\right \}>\eps\right)
             & = \Prob\left(\left(\bigcup\limits_{k\in[\width]}\left\{\abs{\bar{a}_k^0}>\eps\right\}\right)\bigcup\left(\bigcup\limits_{k\in[\width],\alpha\in[d]}\left\{\abs{{(\vw_k^0)}_{\alpha}}>\eps\right\}\right)\right) \\
             & \leq \sum_{k=1}^\width \Prob\left(\abs{\bar{a}_k^0}>\eps\right) + \sum_{k=1}^\width\sum_{\alpha=1}^d \Prob\left(\abs{{(\vw^0_k)}_{\alpha}}>\eps\right)                             \\
             & \leq 2\width \E^{-\frac{1}{2}\eps^2} + 2\width d \E^{-\frac{1}{2}\eps^2}                                                                                                       \\
             & = 2\width(d+1)\E^{-\frac{1}{2}\eps^2}                                                                                                                                    \\
             & = \delta,
        \end{aligned}
    \end{equation*}
    which implies the conclusions of this lemma.
\end{proof}

\begin{lem}\label{lem2}
    For any $\delta\in(0,1)$ with probability at least $1-\delta$ over the random initialization in \eqref{eqn:init}, we have
    \begin{equation*}
        \RS(\vtheta^0)\leq\frac{1}{2}\left(1 + 32\gamma\sqrt{\width}Md^3\left(\log\frac{4\width(d+1)}{\delta}\right)^2\left(\sqrt{2\log(2d)}+\sqrt{2\log(8/\delta)}\right)\right)^2,
    \end{equation*}
\end{lem}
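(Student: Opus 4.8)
The plan is to bound $\RS(\vtheta^0) = \frac{1}{2n}\sum_{i=1}^n (f_{\vtheta^0}(\vx_i) - f(\vx_i))^2$ by controlling $|f_{\vtheta^0}(\vx_i)|$ uniformly in $i$, then using $|f|\leq 1$ and $(|a|+|b|)^2$-type bounds. Since $\RS(\vtheta^0) \leq \frac{1}{2}\max_i (|f_{\vtheta^0}(\vx_i)| + |f(\vx_i)|)^2 \leq \frac{1}{2}(\max_i |f_{\vtheta^0}(\vx_i)| + 1)^2$, it suffices to show that, with probability at least $1-\delta$, $\max_i|f_{\vtheta^0}(\vx_i)| \leq 32\gamma\sqrt{\width}Md^3(\log\frac{4\width(d+1)}{\delta})^2(\sqrt{2\log(2d)}+\sqrt{2\log(8/\delta)})$.

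First I would recall the explicit form \eqref{eqn:NN2}: $f_{\vtheta^0}(\vx_i) = \sum_{k=1}^\width a_k^0[(\vw_k^0)^\T\mA(\vx_i)\vw_k^0\,\sigma''((\vw_k^0)^\T\vx_i) + \vb^\T(\vx_i)\vw_k^0\,\sigma'((\vw_k^0)^\T\vx_i) + c(\vx_i)\sigma((\vw_k^0)^\T\vx_i)]$. I would condition on the event of Lemma \ref{lem1} (applied with $\delta/2$ in place of $\delta$), which gives $\max_k\{|\bar a_k^0|, \|\vw_k^0\|_\infty\} \leq \sqrt{2\log\frac{4\width(d+1)}{\delta}}=:\tau$, hence $\|\vw_k^0\|_1 \leq d\tau$, $|(\vw_k^0)^\T\vx_i|\leq d\tau$ (since $\vx_i\in[0,1]^d$), and by Assumption \ref{asp:bound} each coefficient function is bounded by $M$. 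The three terms inside the bracket are then bounded respectively by $M d^2\tau^2\cdot\frac{1}{2}(d\tau)^2$, $M d\tau\cdot\frac12(d\tau)^2$, and $M\cdot\frac16(d\tau)^3$ — the dominant scaling is $O(Md^4\tau^4)$, but checking the constants carefully against the target bound $32 Md^3\tau^2\cdot(\cdots)$ is something I'd need to redo: more plausibly the intended bound uses $\|\vw_k^0\|_1^3\leq d^3\tau^3$ and one extra $\tau$ from the $\mA$-term, so the per-neuron bracket is at most $\tfrac{3}{2}M d^3\tau^4$ or similar. The key point is that each summand is $|a_k^0|$ times a quantity $\leq C M d^3\tau^4$, and there is a second, independent source of randomness: the signs and magnitudes of $a_k^0\sim\mathcal N(0,\gamma^2)$.

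To avoid the crude bound $\sum_k |a_k^0| \cdot (CMd^3\tau^4) \approx \width\gamma\cdot CMd^3\tau^4$ (which would give $\width$ rather than $\sqrt\width$), I would instead view $f_{\vtheta^0}(\vx_i) = \sum_k a_k^0 b_{k,i}$ where $b_{k,i}$ depends only on $\vw_k^0$, and, conditionally on the $\vw_k^0$'s, this is a sum of independent mean-zero Gaussians with variance $\gamma^2 b_{k,i}^2$, so $f_{\vtheta^0}(\vx_i)\mid\{\vw_k^0\}$ is Gaussian with standard deviation $\gamma(\sum_k b_{k,i}^2)^{1/2} \leq \gamma\sqrt{\width}\cdot\max_k|b_{k,i}| \leq \gamma\sqrt{\width}\cdot CMd^3\tau^4$ on the Lemma \ref{lem1} event. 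A Gaussian tail bound plus a union bound over $i\in[n]$ then gives $\max_i |f_{\vtheta^0}(\vx_i)| \leq \gamma\sqrt{\width}\cdot CMd^3\tau^4\cdot\sqrt{2\log(4n/\delta)}$ with probability $\geq 1-\delta/2$; combining the two events via a union bound yields total failure probability $\leq\delta$. Finally I would reconcile $\tau^4\sqrt{2\log(4n/\delta)}$ with the stated $(\log\frac{4\width(d+1)}{\delta})^2(\sqrt{2\log(2d)}+\sqrt{2\log(8/\delta)})$ by noting $\tau^2 = 2\log\frac{4\width(d+1)}{\delta}$ so $\tau^4 = 4(\log\frac{4\width(d+1)}{\delta})^2$, and absorbing/bounding the remaining logarithmic factors (here the paper's $\sqrt{2\log(2d)}$ term presumably arises from handling the $\ell_1$–$\ell_\infty$ duality or a Rademacher-type step on the $\vw$-dependence rather than from a bare union bound, so the exact route to that term is the detail I'd most need to pin down). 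I expect the main obstacle to be purely bookkeeping: tracking the numerical constant $32$ and the precise combination of $d$-powers and log-factors through the three bracket terms and the two concentration steps, rather than any conceptual difficulty — the Gaussian-conditioning trick to get $\sqrt\width$ instead of $\width$ is the one genuinely load-bearing idea.
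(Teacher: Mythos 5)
Your route diverges from the paper's in a way that produces a genuinely different (and, as stated, not the claimed) bound, and you yourself flag the symptom: you cannot derive the $\sqrt{2\log(2d)}$ factor. Your argument conditions on $\{\vw_k^0\}$, notes that $f_{\vtheta^0}(\vx_i)=\sum_k a_k^0 b_{k,i}$ is conditionally Gaussian, and union-bounds over $i\in[n]$. This unavoidably yields a $\sqrt{2\log(4n/\delta)}$ factor, whereas the lemma's bound contains no $n$-dependence at all — it has $\sqrt{2\log(2d)}+\sqrt{2\log(8/\delta)}$ in that slot. Those quantities are not interchangeable: for $n\gg d$ your bound is strictly weaker, and since Lemma~\ref{lem2} does not assume $\width\geq n$ you cannot absorb the $\log n$ factor either. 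The missing idea is that the paper never touches the $n$ sample points. It instead bounds $\sup_{\vx\in\Omega}\abs{f_{\vtheta^0}(\vx)}$ uniformly over the \emph{entire domain}, by writing $\tfrac{1}{\gamma\width}f_{\vtheta^0}(\vx)=\tfrac{1}{\width}\sum_{k=1}^\width h(\bar a_k^0,\vw_k^0;\vx)$ with $\Exp_{(\bar a,\vw)}h(\bar a,\vw;\vx)=0$, and then invoking the Rademacher-complexity uniform law (Theorem~\ref{thm..RademacherComplexityGeneralizationGap}) with the $\width$ neuron parameters as the "sample set" and $\vx\in\Omega$ indexing the function class $\fH$. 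The $\sqrt{\width}$ then comes from the decay of $\Rad_{\bar\vtheta^0}(\fH)$, the $\sqrt{2\log(2d)}$ from Lemma~\ref{lem..linear} applied to $\vx\mapsto\vw_k^\T\vx$ inside a contraction step, and the $\sqrt{2\log(8/\delta)}$ from the deviation term — none of which your conditional-Gaussian argument reproduces.

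Two smaller points. First, there is a power-of-$\tau$ slip: you quote $\max_k\abs{b_{k,i}}\leq CMd^3\tau^4$, but $b_{k,i}$ is the bracket \emph{without} the $a_k$ factor, so on the Lemma~\ref{lem1} event it is bounded by $2M\norm{\vw_k^0}_1^3\leq 2Md^3\tau^3$; the extra $\tau$ in the paper's $(\log\cdot)^2=\tau^4/4$ comes from the deterministic bound $\abs{\bar a_k^0}\leq\tau$, which your conditioning replaces by the random $a_k^0$. Second, your conditioning step uses the unconditional Gaussian law of $a_k^0$ while simultaneously conditioning on the Lemma~\ref{lem1} event, which also constrains the $\bar a_k^0$; you would need to condition only on the $\vw_k^0$-part of that event for the Gaussian tail bound to apply cleanly. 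Neither fix rescues the $n$-dependence, so the uniform-over-$\Omega$ Rademacher argument remains the load-bearing idea you would have to add.
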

\begin{proof}
    From Lemma~\ref{lem1} we know that with probability at least $1-\delta/2$,
    \begin{equation*}
        \abs{\bar{a}_k^0} \leq \sqrt{2\log\frac{4\width(d+1)}{\delta}} \quad \text{and} \quad \norm{\vw_k^0}_1 \leq d\sqrt{2\log\frac{4\width(d+1)}{\delta}}.
    \end{equation*}
        Let
    \begin{equation*}
        \fH = \{h(\bar{a},\vw;\vx) \mid h(\bar{a},\vw;\vx) = \bar{a}\left[\vw^{\T}\mA(\vx)\vw \sigma''(\vw^\T\vx) + \vb^{\T}(\vx)\vw\sigma'(\vw^\T\vx) + c(\vx)\sigma(\vw^\T\vx)\right],\vx\in\Omega \}.
    \end{equation*}
    Note that $\mA$, $\vb$, and $c$ are known functions of $\vx$.
    Each element in the above set is a function of $\bar{a}$ and $\vw$ while $\vx\in\Omega=[0,1]^d$ is a parameter. 
    Since $\norm{\vx}_\infty\leq 1$, we have
    \begin{equation*}
        \begin{aligned}
            \abs{h(\bar{a}^0_k,\vw^0_k;\vx)}
             & \leq \abs{\bar{a}_k^0}\left[M\norm{\vw_k^0}_1^3 + \frac{1}{2}M\norm{\vw_k^0}_1^3 + \frac{1}{6}M\norm{\vw_k^0}_1^3\right] \\
             & \leq 2M \abs{\bar{a}_k^0}\norm{\vw_k^0}_1^3\\
             & \leq 8 M d^3 {\left(\log \frac{4\width(d+1)}{\delta}\right)}^2.
        \end{aligned}
    \end{equation*}
    Then with probability at least $1-\delta/2$, by the Rademacher-based uniform convergence theorem, we have
    \begin{equation*}
        \begin{aligned}
            \frac{1}{\gamma \width}\sup_{\vx\in\Omega}\abs{f_{\vtheta^0}(\vx)}
             & = \sup_{\vx\in\Omega}\Abs{\frac{1}{\width}\sum_{k=1}^\width h(\bar{a}^0_k,\vw^0_k;\vx)-\Exp_{(\bar{a},\vw)\sim \mathcal{N}(0,\mI_{d+1})}h(\bar{a},\vw;\vx)}\\
             & \leq 2\Rad_{\bar{\vtheta}^0}(\fH) + 24Md^3 \left(\log\frac{4\width(d+1)}{\delta}\right)^2\sqrt{\frac{2\log(8/\delta)}{\width}},
        \end{aligned}
    \end{equation*}
    where
    \begin{align*}
        \Rad_{\bar{\vtheta}^0}(\fH) 
        &:= \frac{1}{\width}\Exp_{\vtau}\left[\sup_{\vx\in\Omega}\sum_{k=1}^\width\tau_k h(\bar{a}_k^0,\vw_k^0;\vx)\right]
        \leq I_1+I_2+I_3,\\
        I_1
        &= \frac{1}{\width}\Exp_{\vtau}\left[\sup_{\vx\in\Omega}\sum_{k=1}^\width\tau_k \bar{a}_k^0\vw^{0\T}_k\mA(\vx)\vw_k^0\sigma''(\vw^{0\T}_k\vx)\right],\\
        I_2
        &= \frac{1}{\width}\Exp_{\vtau}\left[\sup_{\vx\in\Omega}\sum_{k=1}^\width\tau_k \bar{a}_k^0\vb^\T(\vx)\vw_k^0\sigma'(\vw^{0\T}_k\vx)
        \right],\\
        I_3
        &= \frac{1}{\width}\Exp_{\vtau}\left[\sup_{\vx\in\Omega}\sum_{k=1}^\width\tau_k \bar{a}_k^0 c(\vx)\sigma(\vw^{0\T}_k\vx)
        \right],
    \end{align*}
    where $\vtau$ is a random vector in $\sN^\width$ with i.i.d. entries $\{\tau_k\}_{k=1}^\width$ following the Rademacher distribution.
    
    We only prove for $I_1$. It can be straightforwardly extended to $I_2$ and $I_3$. 
    \begin{align}
        I_1
        &= \frac{1}{\width}\Exp_{\vtau}\left[\sup_{\vx\in\Omega}\sum_{k=1}^\width\tau_k \bar{a}_k^0\vw^{0\T}_k\mA(\vx)\vw_k^0\sigma''(\vw^{0\T}_k\vx)\right]\nonumber\\
        &\leq \frac{1}{\width}\Exp_{\vtau}\left[\sup_{\vx,\vy\in\Omega}\sum_{k=1}^\width\tau_k \bar{a}_k^0\vw^{0\T}_k\mA(\vy)\vw_k^0\sigma''(\vw^{0\T}_k\vx)\right]\nonumber\\
        &= \frac{1}{\width}\Exp_{\vtau}\left[\sup_{\vx,\vy\in\Omega}\sum_{k=1}^\width\sum_{\alpha,\beta=1}^d\tau_k \bar{a}_k^0(\vw^{0\T}_k)_{\alpha}A_{\alpha\beta}(\vy)(\vw_k^0)_{\beta}\sigma''(\vw^{0\T}_k\vx)\right]\nonumber\\
        &\leq \sum_{\alpha,\beta=1}^d\frac{1}{\width}\Exp_{\vtau}\left[\sup_{\vx,\vy\in\Omega}\sum_{k=1}^\width\tau_k \bar{a}_k^0(\vw^{0\T}_k)_{\alpha}A_{\alpha\beta}(\vy)(\vw_k^0)_{\beta}\sigma''(\vw^{0\T}_k\vx)\right].\label{eq..InitialI_1Entry}
    \end{align}
    For any $\alpha,\beta\in[d]$, we have
    \begin{align}
        &~~~~\Exp_{\vtau}\left[\sup_{\vx,\vy\in\Omega}\sum_{k=1}^\width\tau_k \bar{a}_k^0(\vw^{0\T}_k)_{\alpha}A_{\alpha\beta}(\vy)(\vw_k^0)_{\beta}\sigma''(\vw^{0\T}_k\vx)\right]\nonumber\\
        &\leq \Exp_{\vtau}\left[\sup_{\vx,\vy\in\Omega}\Abs{A_{\alpha\beta}(\vy)}\Abs{\sum_{k=1}^\width\tau_k \bar{a}_k^0(\vw^{0\T}_k)_{\alpha}(\vw_k^0)_{\beta}\sigma''(\vw^{0\T}_k\vx)}\right]\nonumber\\
        &\leq M \Exp_{\vtau}\left[\sup_{\vx\in\Omega}\Abs{\sum_{k=1}^\width\tau_k \bar{a}_k^0(\vw^{0\T}_k)_{\alpha}(\vw_k^0)_{\beta}\sigma''(\vw^{0\T}_k\vx)}\right]\nonumber\\
        &\leq M \Exp_{\vtau}\left[\sup_{\vx\in\Omega}\sum_{k=1}^\width\tau_k \bar{a}_k^0(\vw^{0\T}_k)_{\alpha}(\vw_k^0)_{\beta}\sigma''(\vw^{0\T}_k\vx)\right]
        +
        M \Exp_{\vtau}\left[\sup_{\vx\in\Omega}\sum_{k=1}^\width-\tau_k \bar{a}_k^0(\vw^{0\T}_k)_{\alpha}(\vw_k^0)_{\beta}\sigma''(\vw^{0\T}_k\vx)\right]\nonumber\\
        &= 2M \Exp_{\vtau}\left[\sup_{\vx\in\Omega}\sum_{k=1}^\width\tau_k \bar{a}_k^0(\vw^{0\T}_k)_{\alpha}(\vw_k^0)_{\beta}\sigma''(\vw^{0\T}_k\vx)\right],\label{eq..InitialI_1Symmetry}
    \end{align}
    where in the third inequality, we have used the fact that $\sigma''(\vw_k^{0\T}\vx)=0$ for $\vx=0$ and for any $\vw_k^0$. Applying Lemma \ref{lem..RademacherComplexityContraction} with $\psi_k(y_k)=\bar{a}_k(\vw^{0\T}_k)_{\alpha}(\vw_k^0)_{\beta}\sigma''(y_k)$ for $k\in[\width]$, whose Lipschitz constant is $\left(\sqrt{2\log\frac{4\width(d+1)}{\delta}}\right)^3$, we have for all $\alpha,\beta\in[d]$
    \begin{equation}
        \Exp_{\vtau}\left[\sup_{\vx\in\Omega}\sum_{k=1}^\width\tau_k \bar{a}_k^0(\vw^{0\T}_k)_{\alpha}(\vw_k^0)_{\beta}\sigma''(\vw^{0\T}_k\vx)\right]
        \leq \left(\sqrt{2\log\frac{4\width(d+1)}{\delta}}\right)^3\Exp_{\vtau}\left[\sup_{\vx\in\Omega}\sum_{k=1}^\width\tau_k \vw^{0\T}_k\vx\right].\label{eq..InitialI_1Contraction}
    \end{equation}
    Therefore, combining \eqref{eq..InitialI_1Entry}, \eqref{eq..InitialI_1Symmetry}, and \eqref{eq..InitialI_1Contraction}, we obtain
    \begin{align*}
        I_1
        & \leq
        \frac{2Md^2}{\width}
        \left(\sqrt{2\log\frac{4\width(d+1)}{\delta}}\right)^3\Exp_{\vtau}\left[\sup_{\vx\in\Omega}\sum_{k=1}^\width\tau_k \vw^{0\T}_k\vx\right]\\
        & \leq \frac{2Md^3}{\sqrt{\width}}
        \left(\sqrt{2\log\frac{4\width(d+1)}{\delta}}\right)^4\sqrt{2\log(2d)}\\
        & \leq \frac{8Md^3\sqrt{2\log(2d)}}{\sqrt{\width}}
        \left(\log\frac{4\width(d+1)}{\delta}\right)^2,
    \end{align*}
    where the second inequality is by the Rademacher bound for linear predictors in Lemma \ref{lem..linear}. 
    For $I_2$ and $I_3$, we note that $\sigma(z)=\frac{1}{6}z^2\sigma''(z)$ and $\sigma'(z)=\frac{1}{2}z\sigma''(z)$. Then by a similar argument, we have
    \begin{align*}
        I_2
        &\leq \frac{4Md^2\sqrt{2\log(2d)}}{\sqrt{\width}}
        \left(\log\frac{4\width(d+1)}{\delta}\right)^2,\\
        I_3
        &\leq \frac{4Md\sqrt{2\log(2d)}}{3\sqrt{\width}}
        \left(\log\frac{4\width(d+1)}{\delta}\right)^2,\\
        \Rad_{\bar{\vtheta}^0}(\fH) 
        &\leq \frac{16Md^3\sqrt{2\log(2d)}}{\sqrt{\width}}
        \left(\log\frac{4\width(d+1)}{\delta}\right)^2.
    \end{align*}
    
    So one can get
    \begin{align*}
        \sup_{\vx\in\Omega}\abs{f_{\vtheta^0}(\vx)} &\leq 32\gamma Md^3\sqrt{\width}\sqrt{2\log(2d)}
        \left(\log\frac{4\width(d+1)}{\delta}\right)^2+ 24\gamma \sqrt{\width}Md^3\left(\log\frac{4\width(d+1)}{\delta}\right)^2\sqrt{2\log(8/\delta)}\\
        &\leq 32\gamma\sqrt{\width}Md^3\left(\log\frac{4\width(d+1)}{\delta}\right)^2\left(\sqrt{2\log(2d)}+\sqrt{2\log(8/\delta)}\right).
    \end{align*}
    Then
    \begin{equation*}
        \begin{aligned}
            \RS(\vtheta^0)
             & \leq \frac{1}{2n}\sum_{i=1}^{n}\left(1 + \abs{f_{\vtheta^0}(\vx_i)}\right)^{2}                                                                    \\
             & \leq \frac{1}{2}\left(1 + 32\gamma\sqrt{\width}Md^3\left(\log\frac{4\width(d+1)}{\delta}\right)^2\left(\sqrt{2\log(2d)}+\sqrt{2\log(8/\delta)}\right)\right)^2,
        \end{aligned}
    \end{equation*}
    where the first inequality comes from the fact that $|f|\leq 1$ by our assumption of the PDE.
\end{proof}


The following lemma shows the positive definiteness of $\mG^{(a)}$ at initialization.

\begin{lem}\label{lem:lambda_min}
    For any $\delta\in(0,1)$, if $\width\geq\frac{256n^4M^4C_d}{\lambda_S^2\delta}$, then with probability at least $1-\delta$ over the random initialization in \eqref{eqn:init}, we have
    \begin{equation*}
        \lambda_{\min}\left(\mG^{(a)}(\vtheta^0)\right)\geq\frac{3}{4}\lambda_S,
    \end{equation*}
    where $C_d := \Exp\norm{\vw}_1^{12}<+\infty$ with $\vw\sim \mathcal{N}(\vzero,\mI_d)$.
\end{lem}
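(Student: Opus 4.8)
The plan is to show that the finite-width Gram matrix $\mG^{(a)}(\vtheta^0)$ concentrates \emph{entrywise} around the infinite-width kernel matrix $\mK^{(a)}$, and then combine a Frobenius-norm bound with Weyl's perturbation inequality and Assumption \ref{assump..LambdaMin}. The first step is a uniform polynomial envelope for the summands. Writing $\mG^{(a)}_{ij}(\vtheta^0)=\tfrac1\width\sum_{k=1}^\width g^{(a)}(\vw_k^0;\vx_i,\vx_j)$ with $g^{(a)}(\vw;\vx,\vx')=P(\vw;\vx)P(\vw;\vx')$ and $P(\vw;\vx):=\vw^\T\mA(\vx)\vw\sigma''(\vw^\T\vx)+\vb^\T(\vx)\vw\sigma'(\vw^\T\vx)+c(\vx)\sigma(\vw^\T\vx)$, I would use $\vx\in[0,1]^d$ (so $\norm{\vx}_\infty\le 1$) and Assumption \ref{asp:bound} together with the elementary bounds $\abs{\sigma''(\vw^\T\vx)}\le\norm{\vw}_1$, $\abs{\sigma'(\vw^\T\vx)}\le\tfrac12\norm{\vw}_1^2$, $\abs{\sigma(\vw^\T\vx)}\le\tfrac16\norm{\vw}_1^3$, $\abs{\vw^\T\mA(\vx)\vw}\le M\norm{\vw}_1^2$, $\abs{\vb^\T(\vx)\vw}\le M\norm{\vw}_1$ to get $\abs{P(\vw;\vx)}\le(1+\tfrac12+\tfrac16)M\norm{\vw}_1^3\le 2M\norm{\vw}_1^3$, hence $\abs{g^{(a)}(\vw;\vx,\vx')}\le 4M^2\norm{\vw}_1^6$ and $g^{(a)}(\vw;\vx,\vx')^2\le 16M^4\norm{\vw}_1^{12}$.

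Next I would pass to the second moment. Since the $\vw_k^0$ are i.i.d.\ $\mathcal N(\vzero,\mI_d)$ and $\Exp g^{(a)}(\vw_k^0;\vx_i,\vx_j)=k^{(a)}(\vx_i,\vx_j)=\mK^{(a)}_{ij}$, we have $\Exp\mG^{(a)}_{ij}(\vtheta^0)=\mK^{(a)}_{ij}$ and
\[
\Exp\bigl[(\mG^{(a)}_{ij}(\vtheta^0)-\mK^{(a)}_{ij})^2\bigr]=\frac1\width\,\mathrm{Var}\bigl(g^{(a)}(\vw;\vx_i,\vx_j)\bigr)\le\frac1\width\,\Exp\bigl[g^{(a)}(\vw;\vx_i,\vx_j)^2\bigr]\le\frac{16M^4C_d}{\width},
\]
where $C_d=\Exp\norm{\vw}_1^{12}<\infty$ by finiteness of Gaussian moments. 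Then Markov's inequality applied entrywise gives, for each fixed $(i,j)$,
\[
\Prob\Bigl(\abs{\mG^{(a)}_{ij}(\vtheta^0)-\mK^{(a)}_{ij}}\ge\tfrac{\lambda_S}{4n}\Bigr)\le\frac{16n^2}{\lambda_S^2}\cdot\frac{16M^4C_d}{\width}=\frac{256\,n^2M^4C_d}{\lambda_S^2\width}.
\]
A union bound over the $n^2$ index pairs, together with the hypothesis $\width\ge 256n^4M^4C_d/(\lambda_S^2\delta)$, shows that with probability at least $1-\delta$ one has $\abs{\mG^{(a)}_{ij}(\vtheta^0)-\mK^{(a)}_{ij}}<\lambda_S/(4n)$ for every $i,j\in[n]$.

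Finally, on that event $\norm{\mG^{(a)}(\vtheta^0)-\mK^{(a)}}_2\le\norm{\mG^{(a)}(\vtheta^0)-\mK^{(a)}}_{\mathrm F}\le\bigl(n^2\cdot\lambda_S^2/(16n^2)\bigr)^{1/2}=\lambda_S/4$, and since both matrices are symmetric, Weyl's inequality and Assumption \ref{assump..LambdaMin} give $\lambda_{\min}(\mG^{(a)}(\vtheta^0))\ge\lambda_{\min}(\mK^{(a)})-\lambda_S/4=\lambda_S-\lambda_S/4=\tfrac34\lambda_S$, as claimed.

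\textbf{Main obstacle.} There is no genuine analytic difficulty here; it is a law-of-large-numbers/concentration argument. The points requiring care are (i) deriving the uniform polynomial envelope $\abs{g^{(a)}(\vw;\vx,\vx')}\le 4M^2\norm{\vw}_1^6$ correctly, which hinges on restricting to the unit cube ($\norm{\vx}_\infty\le 1$) and on the coefficient bounds of Assumption \ref{asp:bound}, and (ii) the decision to control each entry at the scale $\lambda_S/(4n)$ and then union-bound over all $n^2$ entries; it is precisely this choice (rather than applying Markov directly to $\Exp\norm{\mG^{(a)}(\vtheta^0)-\mK^{(a)}}_{\mathrm F}^2\le 16n^2M^4C_d/\width$, which would be sharper in $n$) that produces the stated $n^4$ dependence in the width requirement.
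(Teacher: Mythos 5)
Your proof is correct and follows essentially the same route as the paper: bound $g^{(a)}$ pointwise by $4M^2\norm{\vw}_1^6$, deduce the entrywise variance bound $16M^4C_d/\width$, apply Chebyshev at scale $\lambda_S/(4n)$, union-bound over the $n^2$ index pairs, and close with $\norm{\cdot}_2\le\norm{\cdot}_{\mathrm F}$ together with Weyl's inequality and Assumption \ref{assump..LambdaMin}. The one cosmetic improvement is that you invoke the union bound directly, whereas the paper writes the success probability as $\left(1-\frac{256M^4n^2C_d}{\lambda_S^2\width}\right)^{n^2}$ (a product form that implicitly presumes independence of the events $\Omega_{ij}$) before relaxing to $1-n^2p$; your phrasing reaches the same final bound without that detour.
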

\begin{proof}
    We define $\Omega_{ij} := \{\vtheta^0 \mid \lvert \mG^{(a)}_{ij}(\vtheta^0) - \mK^{(a)}_{ij}\rvert \leq \frac{\lambda_S}{4n}\}$.
    Note that
    \begin{equation*}
        \abs{g^{(a)}(\vw_k^0;\vx_i,\vx_j)} \leq \left(M\norm{\vw_k^0}_1^3 + \frac{1}{2}M\norm{\vw_k^0}^3_1 + \frac{1}{6}M\norm{\vw_k^0}_1^3\right)^2 \leq 4M^2 \norm{\vw_k^0}^6_1.
    \end{equation*}
    So
    \begin{equation*}
        \Var\left(g^{(a)}(\vw_k^0;\vx_i,\vx_j)\right) \leq \Exp\left(g^{(a)}(\vw_k^0;\vx_i,\vx_j)\right)^2 \leq 16 M^4 \Exp\norm{\vw_k^0}^{12}_1 = 16M^4 C_d,
    \end{equation*}
    and
    \begin{equation*}
        \Var\left(\mG_{ij}^{(a)}(\vtheta^0)\right) = \frac{1}{\width^2}\sum_{k=1}^{\width}\Var\left(g^{(a)}(\vw_k^0;\vx_i,\vx_j)\right) \leq \frac{16M^4 C_d}{\width}.
    \end{equation*}
    Then the probability of the event $\Omega_{ij}$ has the lower bound:
    \begin{equation*}
        \Prob(\Omega_{ij}) \geq 1 - \frac{\Var\left(\mG_{ij}^{(a)}(\vtheta^0)\right)}{[\lambda_S/(4n)]^2} \geq 1 - \frac{256M^4n^2C_d}{\lambda_S^2\width}.
    \end{equation*}
    Thus, with probability at least $\left(1 - \frac{256M^4n^2C_d}{\lambda_S^2\width}\right)^{n^2} \geq 1 - \frac{256M^4n^4C_d}{\lambda_S^2\width}$, we have all events $\Omega_{ij}$ for $i,j\in[n]$ happen. This implies that with probability at least $1 - \frac{256M^4n^4C_d}{\lambda_S^2\width}$, we have 
    \begin{equation*}
        \norm{\mG^{(a)}(\vtheta^0) - \mK^{(a)}}_{\mathrm{F}} \leq \frac{\lambda_S}{4}
    \end{equation*}
    and
    \begin{equation*}
        \lambda_{\min}\left(\mG^{(a)}(\vtheta^0)\right) \geq \lambda_S - \norm{\mG^{(a)} (\theta^0)- \mK^{(a)}}_{\mathrm{F}} \geq \frac{3}{4}\lambda_S.
    \end{equation*}
    For any $\delta\in(0,1)$, if $\width\geq\frac{256n^4M^4 C_d}{\lambda_S^2\delta}$, then with probability at least $1-\frac{256M^4n^4C_d}{\lambda_S^2\width}\geq 1-\delta$ over the initialization $\vtheta^0$, we have $\lambda_{\min}\left(\mG^{(a)}(\vtheta^0)\right) \geq \frac{3}{4}\lambda_S$.
\end{proof}

The following lemma estimates the empirical loss dynamics before the stopping time $t^*$ in \eqref{eqn:ts}.

\begin{lem}\label{lem:exp_RS}
    For any $\delta\in(0,1)$, if $\width\geq\frac{256n^4M^4 C_d}{\lambda_S^2\delta}$, then with probability at least $1-\delta$ over the random initialization in \eqref{eqn:init}, we have for any $t\in[0, t^*)$
    \begin{equation*}
        \RS(\vtheta(t)) \leq \exp\left(-\frac{\width\lambda_S t}{n}\right)\RS(\vtheta^0).
    \end{equation*}
\end{lem}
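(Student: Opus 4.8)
The plan is to turn the gradient-flow bound \eqref{eqn:boundS} into a closed differential inequality for $\RS$ on the interval $[0,t^*)$ by controlling $\lambda_{\min}\!\left(\mG^{(a)}(\vtheta(t))\right)$ uniformly there, and then integrate. First I would invoke Lemma~\ref{lem:lambda_min}: since the hypothesis $\width\geq\frac{256n^4M^4C_d}{\lambda_S^2\delta}$ is exactly its hypothesis, with probability at least $1-\delta$ over the initialization \eqref{eqn:init} we have $\lambda_{\min}\!\left(\mG^{(a)}(\vtheta^0)\right)\geq\frac34\lambda_S$; I would work on this event for the remainder. Next, for every $t\in[0,t^*)$ the definitions \eqref{eqn:ts} and \eqref{eqn:Mtheta} of the stopping time and of $\mathcal{M}(\vtheta^0)$ give $\norm{\mG^{(a)}(\vtheta(t))-\mG^{(a)}(\vtheta^0)}_{\mathrm{F}}\leq\frac14\lambda_S$. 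Since the spectral norm is dominated by the Frobenius norm and the smallest eigenvalue is $1$-Lipschitz with respect to the spectral norm (Weyl's inequality), this yields
\[
\lambda_{\min}\!\left(\mG^{(a)}(\vtheta(t))\right)\ \geq\ \lambda_{\min}\!\left(\mG^{(a)}(\vtheta^0)\right)-\norm{\mG^{(a)}(\vtheta(t))-\mG^{(a)}(\vtheta^0)}_{\mathrm{F}}\ \geq\ \tfrac34\lambda_S-\tfrac14\lambda_S\ =\ \tfrac12\lambda_S
\]
for all $t\in[0,t^*)$.

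Then I would feed this lower bound into \eqref{eqn:boundS}. Recalling that $\RS(\vtheta)=\frac{1}{2n}\ve^{\T}\ve$, so that $\ve^{\T}\ve=2n\,\RS(\vtheta)$, the eigenvalue bound gives
\[
\ve^{\T}\mG^{(a)}(\vtheta(t))\ve\ \geq\ \lambda_{\min}\!\left(\mG^{(a)}(\vtheta(t))\right)\ve^{\T}\ve\ \geq\ \tfrac12\lambda_S\cdot 2n\,\RS(\vtheta(t))\ =\ n\lambda_S\,\RS(\vtheta(t)),
\]
and substituting into \eqref{eqn:boundS} produces, for every $t\in[0,t^*)$,
\[
\frac{\D}{\D t}\RS(\vtheta(t))\ \leq\ -\frac{\width}{n^2}\,\ve^{\T}\mG^{(a)}(\vtheta(t))\ve\ \leq\ -\frac{\width\lambda_S}{n}\RS(\vtheta(t)).
\]
Gronwall's inequality — equivalently, checking that $t\mapsto \exp\!\left(\tfrac{\width\lambda_S t}{n}\right)\RS(\vtheta(t))$ has nonpositive derivative on $[0,t^*)$ — then gives $\RS(\vtheta(t))\leq\exp\!\left(-\tfrac{\width\lambda_S t}{n}\right)\RS(\vtheta^0)$ throughout $[0,t^*)$, which is the assertion.

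The only point requiring care is probability bookkeeping rather than any genuine analytic obstacle: Lemma~\ref{lem:lambda_min} already uses the entire $\delta$ budget, so the bound $\lambda_{\min}\!\left(\mG^{(a)}(\vtheta(t))\right)\geq\tfrac12\lambda_S$, and hence the whole conclusion, holds on that same $(1-\delta)$-event with no further union bound; everything after the first step is a deterministic ODE estimate valid pointwise on that event. I would also emphasize that this lemma needs no over-parametrization beyond the threshold of Lemma~\ref{lem:lambda_min} — the additional, larger width requirements in Theorem~\ref{thm:lcr} enter only later, to guarantee that $\vtheta(t)$ never leaves $\mathcal{M}(\vtheta^0)$, i.e.\ that $t^*=\infty$, which upgrades the present bound from $[0,t^*)$ to all $t\geq 0$.
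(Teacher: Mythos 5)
Your proof is correct and matches the paper's argument essentially verbatim: invoke Lemma~\ref{lem:lambda_min} on the full $\delta$-budget to get $\lambda_{\min}(\mG^{(a)}(\vtheta^0))\geq\tfrac34\lambda_S$, use the definition of $t^*$ and $\mathcal{M}(\vtheta^0)$ with Weyl's inequality to push this down to $\tfrac12\lambda_S$ for all $t\in[0,t^*)$, plug into \eqref{eqn:boundS} via $\ve^\T\ve=2n\RS$, and integrate the resulting linear differential inequality. Your closing remarks on probability bookkeeping and on the role of the larger width thresholds in Theorem~\ref{thm:lcr} are both accurate.
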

\begin{proof}
    From Lemma~\ref{lem:lambda_min}, for any $\delta\in(0,1)$ with probability at least $1-\delta$ over initialization $\vtheta^0$ and for any $t\in[0,t^*)$ with $t^*$ defined in \eqref{eqn:ts}, we have $\vtheta(t)\in\mathcal{M}(\vtheta^0)$ defined in \eqref{eqn:Mtheta} and
    \begin{equation*}
        \begin{aligned}
            \lambda_{\min}\left(\mG^{(a)}(\vtheta)\right)
             & \geq \lambda_{\min}\left(\mG^{(a)}(\vtheta^0)\right) - \norm{\mG^{(a)}(\vtheta) - \mG^{(a)}(\vtheta^0)} _{\mathrm{F}} \\
             & \geq \frac{3}{4}\lambda_S - \frac{1}{4}\lambda_S                                                          \\
             & = \frac{1}{2}\lambda_S.
        \end{aligned}
    \end{equation*}
    Note that $\mG_{ij} = \frac{1}{\width}\nabla_{\vtheta}f_{\vtheta}(\vx_i)\cdot\nabla_{\vtheta}f_{\vtheta}(\vx_j)$ and $\nabla_{\vtheta}\RS = \frac{1}{n}\sum_{i=1}^{n}e_i\nabla_{\vtheta}f_{\vtheta}(\vx_i)$, so
    \begin{equation*}
        \norm{\nabla_{\vtheta}\RS (\vtheta(t))}^2_2 = \frac{\width}{n^2}\ve^{\T}\mG(\vtheta(t)) \ve \geq \frac{\width}{n^2}\ve^{\T}\mG^{(a)}(\vtheta(t))\ve,
        \end{equation*}
        where the last equation is true by the fact that $G^{(w)}(\vtheta(t))$ is a Gram matrix and hence positive semi-definite. Together with 
        \begin{equation*}
         \frac{\width}{n^2}\ve^{\T}\mG^{(a)}(\vtheta(t))\ve\geq \frac{2\width}{n}\lambda_{\min}\left(\mG^{(a)}(\vtheta(t))\right)\RS(\vtheta(t))\geq\frac{\width}{n}\lambda_S\RS(\vtheta(t)),
    \end{equation*}
    then finally we get
    \begin{equation*}
        \frac{\D}{\D t}\RS(\vtheta(t)) = - \norm{\nabla_{\vtheta}\RS(\vtheta(t))}^2_2 \leq - \frac{\width}{n}\lambda_S\RS(\vtheta(t)).
    \end{equation*}
    Integrating the above equation yields the conclusion in this lemma.
\end{proof}

The following lemma shows that the parameters in the two-layer neural network is uniformly bounded in time during the training before time $t^*$.

\begin{lem}\label{prop:a_w}
    For any $\delta\in(0,1)$, if $\width\geq\max\left\{\frac{512n^4M^4 C_d}{\lambda_S^2\delta}, \frac{200\sqrt{2}Md^3n\log(4\width(d+1)/\delta)\sqrt{\RS(\vtheta^0)}}{\lambda_S}\right\}$, then with probability at least $1-\delta$ over the random initialization in \eqref{eqn:init}, for any $t\in[0, t^\ast)$ and any $k\in [\width]$,
    \begin{equation*}
        \begin{aligned}
             & \abs{a_k(t) - a_k(0)} \leq q,                    & \norm{\vw_k(t) - \vw_k(0)}_\infty \leq q,\\
             & \abs{a_k(0)} \leq \gamma\eta, \quad 
             & \norm{\vw_k(0)}_\infty\leq \eta,
        \end{aligned}
    \end{equation*}
    where
    \begin{equation*}
        q := \frac{320 M d^3 (\log\frac{4\width(d+1)}{\delta})^{3/2}n\sqrt{\RS(\vtheta^0)}}{\width\lambda_S}
    \end{equation*}
    and
    \[
    \eta:=\sqrt{2\log\frac{4\width(d+1)}{\delta}}.
    \]
\end{lem}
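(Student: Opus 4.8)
The strategy is a bootstrapping/continuity argument on the interval $[0,t^\ast)$, where $t^\ast$ is the stopping time defined in \eqref{eqn:ts}. The second pair of inequalities—$\abs{a_k(0)}\leq\gamma\eta$ and $\norm{\vw_k(0)}_\infty\leq\eta$ with $\eta=\sqrt{2\log(4\width(d+1)/\delta)}$—is immediate from Lemma~\ref{lem1} applied with $\delta$ replaced by $\delta/2$, so I would dispatch that first and reserve the remaining $\delta/2$ of the failure probability for the events needed below (the $\width\geq 512 n^4M^4C_d/(\lambda_S^2\delta)$ threshold is $\frac{256 n^4 M^4 C_d}{\lambda_S^2(\delta/2)}$, exactly what Lemma~\ref{lem:lambda_min} and Lemma~\ref{lem:exp_RS} need at level $\delta/2$). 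The real content is the pair $\abs{a_k(t)-a_k(0)}\leq q$ and $\norm{\vw_k(t)-\vw_k(0)}_\infty\leq q$.

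For that, I would integrate the GD dynamics. We have $a_k(t)-a_k(0)=-\int_0^t \nabla_{a_k}\RS(\vtheta(s))\,\dd s$ and similarly for $\vw_k$, so it suffices to bound $\abs{\nabla_{a_k}\RS(\vtheta(s))}$ and $\norm{\nabla_{\vw_k}\RS(\vtheta(s))}_\infty$ for $s\in[0,t^\ast)$. Writing $\nabla_{a_k}\RS=\frac{1}{n}\sum_i e_i[\vw_k^\T\mA(\vx_i)\vw_k\sigma''(\vw_k^\T\vx_i)+\cdots]$ and using $\norm{\vx_i}_\infty\leq 1$, $|A_{\alpha\beta}|,|b_\alpha|,|c|\leq M$, together with the homogeneity bounds $|\sigma(z)|\leq\frac16|z|^3$, $|\sigma'(z)|\leq\frac12 z^2$, $|\sigma''(z)|\leq|z|$, each bracket is at most $2M\norm{\vw_k}_1^3$, so $\abs{\nabla_{a_k}\RS}\leq\frac{2M}{n}\norm{\vw_k}_1^3\sum_i|e_i|\leq 2M\norm{\vw_k}_1^3\sqrt{\tfrac{2}{n}}\sqrt{\RS(\vtheta)}\cdot\sqrt n$ via Cauchy--Schwarz, i.e. $O(M\norm{\vw_k}_1^3\sqrt{n\RS(\vtheta)})$; the $\vw_k$-gradient involves one more factor of $\norm{\vx_i}_\infty\le 1$ and a factor $|a_k|$, giving $\norm{\nabla_{\vw_k}\RS}_\infty=O(M|a_k|\norm{\vw_k}_1^2\sqrt{n\RS(\vtheta)})$. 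Now I use Lemma~\ref{lem:exp_RS}, valid on $[0,t^\ast)$ at level $\delta/2$, namely $\RS(\vtheta(s))\leq e^{-\width\lambda_S s/n}\RS(\vtheta^0)$, hence $\sqrt{\RS(\vtheta(s))}\leq e^{-\width\lambda_S s/(2n)}\sqrt{\RS(\vtheta^0)}$, which is integrable on $[0,\infty)$ with $\int_0^\infty e^{-\width\lambda_S s/(2n)}\,\dd s=\frac{2n}{\width\lambda_S}$. Combining, $\abs{a_k(t)-a_k(0)}\leq \frac{2n}{\width\lambda_S}\cdot O(M\norm{\vw_k}_1^3\sqrt{n\RS(\vtheta^0)})$, and on $[0,t^\ast)$ we control $\norm{\vw_k}_1\leq d\norm{\vw_k}_\infty$, where $\norm{\vw_k}_\infty$ is bounded by $\eta+q$; the width lower bound $\width\geq \frac{200\sqrt2 M d^3 n\log(4\width(d+1)/\delta)\sqrt{\RS(\vtheta^0)}}{\lambda_S}$ is precisely what forces $q\ll\eta$, so that $\norm{\vw_k}_1\leq d(\eta+q)\lesssim d\eta = d\sqrt{2\log(4\width(d+1)/\delta)}$, yielding the stated $q=\frac{320Md^3(\log\frac{4\width(d+1)}{\delta})^{3/2}n\sqrt{\RS(\vtheta^0)}}{\width\lambda_S}$ after plugging $\norm{\vw_k}_1^3\lesssim d^3\eta^3 = d^3(2\log\frac{4\width(d+1)}{\delta})^{3/2}$ and tracking constants. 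For $\vw_k$, since $|a_k(s)|\leq|a_k(0)|+q\leq\gamma\eta+q$ and $\gamma<1$, the $|a_k|$ factor is $\le 1$ up to the same logarithmic factors and one actually gets a \emph{smaller} bound for $\norm{\vw_k(t)-\vw_k(0)}_\infty$ than for $\abs{a_k(t)-a_k(0)}$ (fewer powers of $\norm{\vw_k}_1$), so taking the common upper bound $q$ is safe.

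**The circularity, and how to break it.** The estimate above used $\norm{\vw_k(s)}_\infty\leq\eta+q$ to bound the gradient, but deriving $\abs{\vw_k(t)-\vw_k(0)}\le q$ is exactly what we are trying to prove—this is the expected main obstacle. I would resolve it with a standard continuity/maximal-time argument: define $\tau:=\sup\{\, t\in[0,t^\ast) : \abs{a_k(s)-a_k(0)}\le q \text{ and } \norm{\vw_k(s)-\vw_k(0)}_\infty\le q \text{ for all } k\in[\width],\, s\le t \,\}$. On $[0,\tau]$ all the bounds above are legitimate, and the integral estimate yields $\abs{a_k(t)-a_k(0)}\le \tfrac12 q$ (say) strictly less than $q$ for all $t\le\tau$—the factor-$2$ slack coming from choosing the constant in front of $q$ large enough relative to the constant produced by the integration, which is where the $200\sqrt2$ vs.\ $320$ discrepancy in the constants is absorbed. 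By continuity of $t\mapsto\vtheta(t)$ this strict inequality would be violated just after $\tau$ unless $\tau=t^\ast$, so in fact $\tau=t^\ast$ and the bounds hold on all of $[0,t^\ast)$. Finally I would collect the failure probabilities: $\delta/2$ for Lemma~\ref{lem1} and $\delta/2$ for the event underlying Lemmas~\ref{lem:lambda_min}–\ref{lem:exp_RS} (which is why both hypotheses on $\width$ carry a $1/\delta$ with the constant $512=2\cdot256$), giving the claim with probability at least $1-\delta$. The only genuinely delicate points are (i) making the self-consistent choice of the numerical constant in $q$ so the bootstrap closes, and (ii) verifying that replacing $\norm{\vw_k}_1$ by its a priori bound $d(\eta+q)$ inside the cubic really is absorbed by the second term in the $\max$ defining the width requirement; both are routine once the structure is set up.
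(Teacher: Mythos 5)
Your proposal is correct and takes essentially the same route as the paper: bound the gradients in terms of the parameter magnitudes, integrate them using the exponential decay of $\RS$ from Lemma~\ref{lem:exp_RS}, and close the resulting circularity with a continuity/bootstrap argument. The paper's version tracks the cumulative parameter magnitude $\xi(t)=\max_{k,s\le t}\{|a_k(s)|,\|\vw_k(s)\|_\infty\}$ and derives a self-bounding inequality $\xi(t)\le\xi(0)+p\xi^3(t)$, whereas you bootstrap directly on the deviation $|a_k(s)-a_k(0)|\le q$ via a maximal time $\tau$; these are equivalent up to constants (the width condition forces $q\lesssim\eta$, which is exactly the slack that makes $(\eta+q)^3<8\eta^3$ and closes the bootstrap, though the strict improvement factor is about $729/1000$ rather than the $\tfrac12$ you quote).
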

\begin{proof}
    Let $\xi(t) = \max\limits_{k\in[\width],s\in[0,t]}\{\abs{a_k(s)},\norm{\vw_k(s)}_\infty\}$. Note that
    \begin{align*}
        \abs{\nabla_{a_k}\RS}^2
         & = \left\{\frac{1}{n}\sum_{i=1}^n e_i\left[\vw_k^{\T}\mA(\vx_i)\vw_k\sigma''(\vw_k^\T\vx_i) + \vb^{\T}(\vx_i)\vw_k\sigma'(\vw_k^\T\vx_i) + c(\vx_i)\sigma(\vw_k^\T\vx_i)\right]\right\}^2\\
         & \leq 8 M^2 \norm{\vw_k}_1^6\RS(\vtheta)\\
         & \leq 8 M^2d^6 (\xi(t))^6\RS(\vtheta),
    \end{align*}
    and
    \begin{align*}
        \norm{\nabla_{\vw_k}\RS}^2_\infty
        &=     \Big\lVert\frac{1}{n}\sum_{i=1}^n e_i a_k\Big[2\mA(\vx_i)\vw_k\sigma''(\vw_k^\T\vx_i) + \vw_k^{\T}\mA(\vx_i)\vw_k\sigma^{(3)}(\vw_k^\T\vx_i)\vx_i \\
        &~~~~+ \sigma'(\vw_k^\T\vx_i)\vb(\vx_i) + \vb^{\T}(\vx_i)\vw_k\sigma''(\vw_k^\T\vx_i)\vx_i + c(\vx_i)\sigma'(\vw_k^\T\vx_i)\vx_i\Big]\Big\rVert_\infty^2   \\
        &\leq \abs{a_k}^2 2\RS(\vtheta)\Big(2 M\norm{\vw_k}^2_1 + M\norm{\vw_k}^2_1 + \frac{1}{2}M\norm{\vw_k}^2_1 + M\norm{\vw_k}^2_1 + M \frac{1}{2}\norm{\vw_k}^2_1\Big)^2\\
        &\leq 50 M^2 \norm{\vw_k}^4_1\abs{a_k}^2\RS(\vtheta)\\
        &\leq 50 M^2d^4(\xi(t))^6\RS(\vtheta).
    \end{align*}
    From Lemma~\ref{lem:exp_RS}, if $\width\geq \frac{512M^4 n^4C_d}{\lambda_s^2\delta}$, then with probability at least $1 - \delta/2$ over initialization
    \begin{equation*}
        \begin{aligned}
            \abs{a_k(t) - a_k(0)}
             & \leq \int_0^t\abs{\nabla_{a_k}\RS(\vtheta(s))}\diff{s}                                                                 \\
             & \leq 2\sqrt{2}Md^3 \int_{0}^{t} \xi^3(t)\sqrt{\RS(\vtheta(s))}\diff{s}                                        \\
             & \leq 2\sqrt{2}Md^3 \xi^3(t)\int_{0}^{t}\sqrt{\RS(\vtheta^0)}\exp\left(-\frac{\width\lambda_S s}{2n}\right)\diff{s} \\
             & \leq \frac{4\sqrt{2}Md^3 n\sqrt{\RS(\vtheta^0)}}{\width\lambda_S}\xi^3(t)                                           \\
             & \leq p\xi^3(t),
        \end{aligned}
    \end{equation*}
    where $p := \frac{10\sqrt{2}d^3 M n\sqrt{\RS(\vtheta^0)}}{\width\lambda_S}$. Similarly,
    \begin{equation*}
        \begin{aligned}
            \norm{\vw_k(t) - \vw_k(0)}_\infty
             & \leq \int_{0}^{t} \norm{\nabla_{\vw_k}\RS(\vtheta(s))}_\infty\diff{s}                                                                      \\
             & \leq 5\sqrt{2}Md^2 \int_{0}^t \xi^3(t)\sqrt{\RS(\vtheta(s))} \diff{s}                                           \\
             & \leq 5\sqrt{2}Md^2 \xi^3(t) \int_{0}^{t} \sqrt{\RS(\vtheta^0)}\exp\left(-\frac{\width\lambda_S s}{2n}\right)\diff{s} \\
             & \leq \frac{10\sqrt{2}Md^2 n\sqrt{\RS(\vtheta^0)}}{\width\lambda_S}\xi^3(t)                                             \\
             & \leq p\xi^3(t).
        \end{aligned}
    \end{equation*}
    So
    \begin{equation}\label{eqn:xib}
        \xi(t) \leq \xi(0) + p\xi^3(t).
    \end{equation}
    From Lemma~\ref{lem1} with probability at least $1 - \delta/2$,
    \begin{align}\label{eqn:xi0}
        \xi(0)=\max_{k\in[\width]}\{\abs{a_k(0)},\norm{\vw_k(0)}_\infty\}
        &\leq \max\left\{\gamma\sqrt{2\log\frac{4\width(d+1)}{\delta}},\sqrt{2\log\frac{4\width(d+1)}{\delta}}\right\}\nonumber\\
        &\leq \sqrt{2\log\frac{4\width(d+1)}{\delta}}=\eta.
    \end{align}
    Since
    \begin{equation*}
        \width
        \geq \frac{200\sqrt{2}Md^3n\log(4\width(d+1)/\delta)\sqrt{\RS(\vtheta^0)}}{\lambda_S}
        = 10\width p\eta^2,
    \end{equation*}
    then $p\leq\frac{1}{10}\left(2\log\frac{4\width(d+1)}{\delta}\right)^{-1} = \frac{1}{10}\eta^{-2}$ and $p(2\eta)^2\leq \frac{2}{5}$. Let
    \begin{equation*}
        t_0 := \inf\{t \mid \xi(t) > 2\eta\}.
    \end{equation*}
    We will prove $t_0\geq t^*$ by contradiction. Suppose that $t_0 < t^*$. For $t\in[0, t_0)$, by \eqref{eqn:xib}, \eqref{eqn:xi0}, and $\xi(t)\leq 2\eta$, we have 
    \begin{equation*}
        \xi(t) \leq \eta + p(2\eta)^2\xi(t)
        \leq \eta + \frac{2}{5}\xi(t),
    \end{equation*}
    then
    \begin{equation*}
        \xi(t) \leq \frac{5}{3}\eta.
    \end{equation*}
    After letting $t\to t_0$, the inequality just above contradicts with the definition of $t_0$. So $t_0 \geq  t^*$ and then $\xi(t) \leq 2\eta$ for all $t \in[0,t^*)$. Thus
    \begin{equation*}
        \begin{aligned}
            \abs{a_k(t) - a_k(0)}        & \leq8 \eta^3 p            \\
            \norm{\vw_k(t) - \vw_k(0)}_\infty & \leq8\eta^3 p.
        \end{aligned}
    \end{equation*}
    Finally, notice that
    \begin{equation}\label{eqn:q}
        \begin{aligned}
            8\eta^3 p
             & = 8\sqrt{8}\left(\log \frac{4\width(d+1)}{\delta}\right)^{3/2}\frac{10\sqrt{2}Md^3 n\sqrt{\RS(\vtheta^0)}}{\width\lambda_S} \\
             & = \frac{320M d^3 \left(\log \frac{4\width(d+1)}{\delta}\right)^{3/2} n\sqrt{\RS(\vtheta^0)}}{\width\lambda_S}                              \\
             & = q,
        \end{aligned}
    \end{equation}
    which ends the proof.
\end{proof}

\subsection{Proof of Theorem \ref{thm:lcr}}
\label{sec:thmlcr}

\begin{proof}[Proof of Theorem \ref{thm:lcr}.]
    From Lemma~\ref{lem:exp_RS}, it is sufficient to prove that the stopping time $t^*$ in Lemma~\ref{lem:exp_RS} is equal to $+\infty$. We will prove this by contradiction.
    
    Suppose $t^* < +\infty$. Note that
    \begin{equation}\label{eqn:lcr1}
        \abs{\mG_{ij}^{(a)}(\vtheta(t^*)) - \mG_{ij}^{(a)}(\vtheta(0))} \leq \frac{1}{\width}\sum_{k=1}^\width \abs{g(\vw_k(t^*);\vx_i,\vx_j) - g(\vw_k(0);\vx_i,\vx_j)}.
    \end{equation}
    By the mean value theorem,
    \begin{align*}
        \abs{g(\vw_k(t^*);\vx_i,\vx_j) - g(\vw_k(0);\vx_i,\vx_j)} 
        &\leq \norm{\nabla g\left(c\vw_k(t^*) + (1-c)\vw_k(0);\vx_i,\vx_j\right)}_\infty\norm{\vw_k(t^*) - \vw_k(0)}_1
    \end{align*}
    for some $c\in (0, 1)$. Further computation yields
    \begin{equation*}
        \begin{aligned}
            \nabla g(\vw;\vx_i,\vx_j) =
             & \Big[2\mA(\vx_i)\vw\sigma''(\vw^\T\vx_i) + \vw^{\T}\mA(\vx_i)\vw\sigma^{(3)}(\vw^\T\vx_i)\vx_i + \sigma'(\vw^\T\vx_i)\vb(\vx_i)      \\
             & \quad\quad + \vb^{\T}(\vx_i)\vw\sigma''(\vw^\T\vx_i)\vx_i + c(\vx_i)\sigma'(\vw^\T\vx_i)\vx_i\Big]                                       \\
             & \quad \times \Big[\vw^{\T}\mA(\vx_j)\vw\sigma''(\vw^\T\vx_j) + \vb^{\T}(\vx_j)\vw\sigma'(\vw^\T\vx_j) + c(\vx_j)\sigma(\vw^\T\vx_j)\Big]  \\
             & + \Big[2\mA(\vx_j)\vw\sigma''(\vw^\T\vx_j) + \vw^{\T}\mA(\vx_j)\vw\sigma^{(3)}(\vw^\T\vx_j)\vx_i + \sigma'(\vw^\T\vx_i)\vb(\vx_i)    \\
             & \quad \quad  + \vb^{\T}(\vx_j)\vw\sigma''(\vw^\T\vx_j)\vx_j + c(\vx_j)\sigma'(\vw^\T\vx_j)\vx_j\Big]                                     \\
             & \quad \times \Big[\vw^{\T}\mA(\vx_i)\vw\sigma''(\vw^\T\vx_i) + \vb^{\T}(\vx_i)\vw\sigma'(\vw^\T\vx_i) + c(\vx_i)\sigma(\vw^\T\vx_i)\Big]
        \end{aligned}
    \end{equation*}
    for all $\bm{w}$. Hence, it holds for all $\bm{w}$ that 
    \begin{equation*}
        \begin{aligned}
            \norm{\nabla g(\vw;\vx_i,\vx_j)}_\infty
             & \leq 2 \Big[2M\norm{\vw}^2_1 + M\norm{\vw}^2_1 + \frac{1}{2}M\norm{\vw}_1^2  + M\norm{\vw}^2_1  + \frac{1}{2}M\norm{\vw}^2_1\Big] \\
             & \quad \times \Big[M\norm{\vw}^3_1 + \frac{1}{2}M\norm{\vw}^3_1 + \frac{1}{6} M\norm{\vw}^3_1 \Big]                                         \\
             & \leq 2 (5M\norm{\vw}^2_1)(2M\norm{\vw}^3_1 )                                                                                                 \\
             & = 20 M^2\norm{\vw}^5_1.
        \end{aligned}
    \end{equation*}
    Therefore, the bound in \eqref{eqn:lcr1} becomes
    \begin{equation}\label{eqn:lcr2}
        \abs{\mG_{ij}^{(a)}(\vtheta(t^*)) - \mG_{ij}^{(a)}(\vtheta(0))} \leq \frac{20 M^2}{\width}\sum_{k=1}^\width \norm{c\vw_k(t^*) + (1-c)\vw_k(0)}^5_1 \norm{\vw_k(t^*) - \vw_k(0)}_1.
    \end{equation}
    By Lemma~\ref{prop:a_w},
    \begin{equation*}
        \norm{c\vw_k(t^*) + (1-c)\vw_k(0)}_1 \leq \norm{\vw_k(0)}_1 + \norm{\vw_k(t^*) - \vw_k(0)}_1 \leq d(\eta + q) \leq 2d\eta,
    \end{equation*}
    where $\eta$ and $q$ are defined in Lemma~\ref{prop:a_w}. So, \eqref{eqn:lcr2} and the above inequalities indicate
    \begin{equation*}
        \abs{\mG_{ij}^{(a)}(\vtheta(t^*)) - \mG_{ij}^{(a)}(\vtheta(0))} \leq 20 M^2 (2d\eta)^5 d q = 640 M^2 d^6 \eta^5 q,
    \end{equation*}
    and
    \begin{equation*}
        \begin{aligned}
            \norm{\mG^{(a)}(\vtheta(t^*)) - \mG^{(a)}(\vtheta(0))} _{\mathrm{F}}
             & \leq 640M^2 d^6 n\eta^5 q                                                                        \\
             & < \frac{2^{21}M^3 d^9n^2(\log\frac{4\width(d+1)}{\delta})^{4}\sqrt{\RS(\vtheta^0)}}{\width\lambda_S} \\
             & \leq \frac{1}{4}\lambda_S,
        \end{aligned}
    \end{equation*}
    if we choose
    \begin{equation*}
        \width\geq\frac{2^{23}M^3d^9n^2(\log(4\width(d+1)/\delta))^{4}\sqrt{\RS(\vtheta^0)}}{\lambda_S^2}.
    \end{equation*}
    The fact that $\norm{\mG^{(a)}(\vtheta(t^*)) - \mG^{(a)}(\vtheta(0))} _{\mathrm{F}}\leq \frac{1}{4}\lambda_S$ above contradicts with the definition of $t^*$ in \eqref{eqn:ts}. Hence, we have completed the proof.
\end{proof}

\section{A priori Estimates of Generalization Error for Two-layer Neural Networks}\label{sec:gen_err}
To obtain good generalization, instead of minimizing $\RS$, we minimize the regularized risk of $\RS(\vtheta)$:
\begin{equation}
    J_{S,\lambda}(\vtheta)
      :=\RS(\vtheta)+\frac{\lambda}{\sqrt{n}}\norm{\vtheta}_{\fP}^3
     \end{equation}
to obtain     
     \begin{equation}
    \vtheta_{S,\lambda}
     = \arg\min_{\vtheta} J_{S,\lambda}(\vtheta).
\end{equation}
Our work is inspired by the seminal work in \cite{Weinan2019,e2019priori} and the proof is a variant of the proof therein. But as we shall see, the differential operator increases the technical difficulty in the analysis: extra non-linearity in the parameters, which makes existing mean field analysis \cite{MeiE7665} not applicable. We will use the path norm defined in Definition \ref{prop:2L} adaptive to the PDE problem, instead of using the path norm in \cite{Weinan2019,e2019priori} for regression problems. We will show that the PDE solution network $\phi(\vx;\vtheta_{S,\lambda})$ generalize well if the true solution is in the Barron-type space defined in Definition \ref{def:bfun}, which is also a variance of the Barron-type space in \cite{Weinan2019,e2019priori}. The generalization error is measured in terms of how well $f(\vx;\vtheta_{S,\lambda}):=\fL \phi(\vx;\vtheta_{S,\lambda})\approx f(\vx)$ generalizes from the random training samples $S=\{\vx_i\}_{i=1}^n\subset\Omega$ to arbitrary samples in $\Omega$.

Recall that $f(\vx;\vtheta)$, also denoted as $f_{\vtheta}(\vx)$, is the result of the differential operator $\fL$ acting on a two-layer neural network $\phi(\vx;\vtheta)$ in the domain $\Omega$. In fact, $f(\vx;\vtheta)$ is also a two-layer neural network as explained in \eqref{eqn:NN2}. Hence, the generalization error analysis of deep learning-based PDE solvers is reduced to the generalization analysis of the special two-layer neural network $f(\vx;\vtheta)$ fitting $f(\vx)$. The special structure of $f(\vx;\vtheta)$ leads to significant difficulty in analyzing the generalization error compared to traditional two-layer neural networks in the literature.

We will first summarize and prove several lemmas related to Rademacher complexity in Section \ref{sec:preRad}. The proofs of our main theorems for the generalization bound in Theorems \ref{thm:gen1} and \ref{thm:gen2} are presented in Section \ref{sec:gen}.

\subsection{Preliminary Lemmas of Rademacher Complexity}
\label{sec:preRad}

First, we define the set of functions 
\[
\fF_Q = \{f(\vx;\vtheta)= \sum_{k=1}^\width a_k[\vw_k^\T\mA(\vx)\vw_k\sigma''(\vw_k^\T\vx)+\vb^\T(\vx)\vw_k\sigma'(\vw_k^\T\vx)+c(\vx)\sigma(\vw_k^\T\vx)]\mid \norm{\vtheta}_{\fP}\leq Q\}.
\]

Second, we estimate the Rademacher complexity of the class of special two-layer neural networks $\fF_Q$.

\begin{lem}[Rademacher complexity of two-layer neural networks]\label{lem..RademacherComplexityTwoLayerPathNorm}
    The Rademacher complexity of $\fF_Q$ over a set of $n$ uniform distributed random samples of $\Omega$, denoted as $S=\{\vx_1,\dots,\vx_n\}$, has an upper bound
    \begin{equation*}
        \Rad_S(\fF_Q)\leq \frac{4MQd^2\sqrt{2\log(2d)}}{\sqrt{n}},
    \end{equation*}
    where $M$ is the upper bound of the differential operator $\fL$ introduced in \eqref{eqn:introM}.
\end{lem}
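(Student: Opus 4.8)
The plan is to decompose the definition of $\Rad_S(\fF_Q)$ into three pieces corresponding to the three terms in $f(\vx;\vtheta)$ — the $\sigma''$-term with coefficient $\vw_k^\T\mA(\vx)\vw_k$, the $\sigma'$-term with coefficient $\vb^\T(\vx)\vw_k$, and the $\sigma$-term with coefficient $c(\vx)$ — bound each separately, and add them up. Because $\sigma(z)=\frac16 z^2\sigma''(z)$ and $\sigma'(z)=\frac12 z\,\sigma''(z)$, the $\sigma'$- and $\sigma$-terms can be folded into the $\sigma''$-term at the cost of extra powers of $\|\vw_k\|_1$ (using $|\vw_k^\T\vx|\le\|\vw_k\|_1$ on $\Omega=[0,1]^d$), so the three pieces are structurally identical up to constants; thus it suffices to treat the leading piece carefully.

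For the leading piece I would proceed as follows. Write $\vw_k^\T\mA(\vx)\vw_k=\sum_{\alpha,\beta=1}^d (\vw_k)_\alpha (\vw_k)_\beta A_{\alpha\beta}(\vx)$ and pull the double sum over $\alpha,\beta$ outside the $\sup$ and the expectation, paying a factor $d^2$. For fixed $\alpha,\beta$ we are left with $\frac1n\Exp_\vtau\sup_{\vx}\sum_k \tau_k a_k (\vw_k)_\alpha(\vw_k)_\beta A_{\alpha\beta}(\vx)\sigma''(\vw_k^\T\vx)$; since $\sigma''(0)=0$ we may, exactly as in the proof of Lemma \ref{lem2} (inequalities \eqref{eq..InitialI_1Symmetry}–\eqref{eq..InitialI_1Contraction}), insert an auxiliary variable $\vy$ to separate the coefficient $A_{\alpha\beta}(\vy)$ from $\sigma''$, bound $|A_{\alpha\beta}(\vy)|\le M$, symmetrize to remove the absolute value, and then apply the contraction lemma (Lemma \ref{lem..RademacherComplexityContraction}). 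The crucial point for the contraction step is the choice of scaling: treat $\psi_k(y)=a_k(\vw_k)_\alpha(\vw_k)_\beta\,\|\vw_k\|_1^{-?}\cdot(\text{normalized }\sigma'')$ — more precisely, I would absorb the homogeneity so that $a_k(\vw_k)_\alpha(\vw_k)_\beta\sigma''(\vw_k^\T\vx)$ has "$\ell^1$-mass" $|a_k|\|\vw_k\|_1^3$, exactly the summand of the path norm. After contraction the problem reduces to the Rademacher complexity of the linear class $\{\vw\mapsto\vw^\T\vx:\|\vx\|_1\le1\}$, for which Lemma \ref{lem..linear} gives the $\sqrt{2\log(2d)/n}$ factor (with the relevant $\|\cdot\|_\infty\le\|\cdot\|_1$ bounds already built into the path-norm normalization). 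Collecting the $d^2$ from the double sum, the $M$ from $|A_{\alpha\beta}|\le M$, the $Q$ from $\|\vtheta\|_{\fP}\le Q$, the symmetrization factor $2$, and the combined constant from the three (now comparable) terms, one arrives at the stated bound $\dfrac{4MQd^2\sqrt{2\log(2d)}}{\sqrt n}$.

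The main obstacle I anticipate is the bookkeeping of the positive-homogeneity structure: the summand $a_k\,\vw_k^\T\mA(\vx)\vw_k\,\sigma''(\vw_k^\T\vx)$ is degree-$3$ homogeneous in $\vw_k$, so to invoke the contraction lemma with the "right" Lipschitz constant one must carefully reparametrize (e.g. factor each $\vw_k$ as $\|\vw_k\|_1\,\hat\vw_k$) so that what the contraction lemma is applied to is a $1$-Lipschitz-per-coordinate scalar map and the leftover homogeneous prefactor assembles exactly into $|a_k|\|\vw_k\|_1^3$, matching the path norm — rather than producing a looser bound involving $\max_k\|\vw_k\|_1^3$, which would not be controlled by $Q$ alone. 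A secondary technical point is justifying the auxiliary-variable trick $\sup_\vx(\cdots)\le\sup_{\vx,\vy}(\cdots)$ together with the $|A_{\alpha\beta}(\vy)|\le M$ bound without losing the sign structure needed for symmetrization; this is exactly the manipulation already carried out in Lemma \ref{lem2}, so I would reuse it verbatim. Everything else — the reduction of the $\sigma$- and $\sigma'$-terms, and the final constant-chasing — is routine.
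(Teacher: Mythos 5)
There is a genuine structural gap: you have swapped the roles of samples and neurons. In $\Rad_S(\fF_Q)$ the Rademacher signs $\tau_1,\dots,\tau_n$ are attached to the $n$ \emph{fixed samples} $\vx_1,\dots,\vx_n$, and the supremum runs over the \emph{parameter set} $\{\|\vtheta\|_\fP\leq Q\}$, so the object to control is
\[
\frac1n\,\Exp_{\vtau}\sup_{\|\vtheta\|_{\fP}\leq Q}\sum_{i=1}^n\tau_i\, f_{\vtheta}(\vx_i).
\]
Your intermediate expression
$\frac1n\Exp_{\vtau}\sup_{\vx}\sum_k\tau_k\,a_k(\vw_k)_\alpha(\vw_k)_\beta A_{\alpha\beta}(\vx)\sigma''(\vw_k^\T\vx)$
instead pairs the signs with neurons $k$ and takes a $\sup$ over $\vx$; that is the quantity that appears in Lemma~\ref{lem2} (bounding $\sup_\vx|f_{\vtheta^0}(\vx)|$ by its mean via a Rademacher complexity over the $\width$ random neurons), not the one you need here. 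Carrying your version through Lemma~\ref{lem..linear} would produce a $\sqrt{\width}$ rather than the $\sqrt n$ in the statement, so the index mix-up is not merely notational. This confusion propagates: (i) the auxiliary-variable trick $\sup_{\vx}\le\sup_{\vx,\vy}$ from Lemma~\ref{lem2}, which you plan to ``reuse verbatim,'' is unnecessary in Lemma~\ref{lem..RademacherComplexityTwoLayerPathNorm} precisely because the $\vx_i$ are fixed, so each $A_{\alpha\beta}(\vx_i)$ is just a constant per sample and can be folded directly into the contraction maps; (ii) the contraction lemma must be applied with maps indexed by samples, $\psi_i(y)=A_{\alpha\beta}(\vx_i)\,\sigma''(y)$ (Lipschitz constant $M$), not with neuron-indexed $\psi_k$ carrying the $(\vw_k)_\alpha(\vw_k)_\beta$ prefactors; and (iii) the path-norm bound $\sum_k|a_k|\|\vw_k\|_1^3\leq Q$ is invoked by H\"older in $k$ \emph{before} contraction, after swapping the order of the sums over $i$ and $k$ and taking absolute values — it is not absorbed into the contraction's Lipschitz constant as you suggest.

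The correct chain therefore looks like: normalize $\hat\vw_k=\vw_k/\|\vw_k\|_1$ so each summand carries an $|a_k|\|\vw_k\|_1^3$ weight; relax to independent unit directions $\vu_k$, swap sums, apply triangle inequality and bound the weighted sum by $Q$; then decouple the three appearances of the direction vector — $\vp,\vq$ in the matrix sandwich $\vp^\T\mA(\vx_i)\vq$ and $\vu$ inside $\sigma''(\vu^\T\vx_i)$ — so that after expanding in $\alpha,\beta$ and paying the $d^2$, the residual object is $\Exp_\vtau\sup_{\|\vu\|_1\leq1}\big|\sum_i\tau_iA_{\alpha\beta}(\vx_i)\sigma''(\vu^\T\vx_i)\big|$; symmetrize (using $\sigma''(0)=0$), contract with $\psi_i(y)=A_{\alpha\beta}(\vx_i)\sigma''(y)$, and finish with Lemma~\ref{lem..linear}. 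Your sketch omits this $\vp,\vq,\vu$ decoupling entirely; without it the function class after the path-norm bound is still quadratic (not linear) in the direction vector and Lemma~\ref{lem..linear} does not apply. The ingredients you list are the right ones, but the proof needs to be rebuilt on the sample-indexed Rademacher structure.
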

\begin{proof}
    Let $\hat{\vw}_k=\vw_k/\norm{\vw_k}_1$ for $k=1,\cdots,\width$ and $\vtau$ be a random vector in $\sN^d$ with i.i.d. entries following the Rademacher distribution. Then
    \begin{align}
         & ~~~~n\Rad_S(\fF_Q)  \nonumber\\
         & = \Exp_{\vtau}\left\{\sup_{\norm{\vtheta}_{\fP}\leq Q}\sum_{i=1}^n\tau_i\sum_{k=1}^\width a_k[\vw_k^\T\mA(\vx_i)\vw_k\sigma''(\vw_k^\T\vx_i)+\vb^\T(\vx_i)\vw_k\sigma'(\vw_k^\T\vx_i)+c(\vx_i)\sigma(\vw_k^\T\vx_i)]\right\} \nonumber\\
         & \leq \Exp_{\vtau}\left[\sup_{\norm{\vtheta}_{\fP}\leq Q}\sum_{i=1}^n\tau_i\sum_{k=1}^\width a_k \vw_k^\T\mA(\vx_i)\vw_k\sigma''(\vw_k^\T\vx_i) \right]
        +\Exp_{\vtau}\left[\sup_{\norm{\vtheta}_{\fP}\leq Q}\sum_{i=1}^n\tau_i\sum_{k=1}^\width a_k \vb^\T(\vx_i)\vw_k\sigma'(\vw_k^\T\vx_i) \right]                                                                                    \nonumber\\
         & ~~+\Exp_{\vtau}\left[\sup_{\norm{\vtheta}_{\fP}\leq Q}\sum_{i=1}^n\tau_i\sum_{k=1}^\width a_k c(\vx_i)\sigma(\vw_k^\T\vx_i) \right]
        \nonumber\\
         & =:I_1+I_2+I_3.\label{eq..RademacherComplexityDecompositionI1I2I3}
    \end{align}
    We first estimate $I_1$ as follows
    \begin{align}
        I_1
         & = \Exp_{\vtau}\left[\sup_{\norm{\vtheta}_{\fP}\leq Q}\sum_{i=1}^n\tau_i\sum_{k=1}^\width a_k\norm{\vw_k}_1^3\hat{\vw}_k^\T\mA(\vx_i)\hat{\vw}_k\sigma''(\hat{\vw}_k^\T\vx_i)\right]                         \nonumber\\
         & \leq \Exp_{\vtau}\left[\sup_{\norm{\vtheta}_{\fP}\leq Q,\norm{\vu_k}_1=1,\forall k}\sum_{i=1}^n\tau_i\sum_{k=1}^\width a_k\norm{\vw_k}_1^3\vu_k^\T\mA(\vx_i)\vu_k\sigma''(\vu_k^\T\vx_i)\right] \nonumber\\
         & \leq \Exp_{\vtau}\left[\sup_{\norm{\vtheta}_{\fP}\leq Q,\norm{\vu_k}_1=1,\forall k}\sum_{k=1}^\width \Abs{a_k\norm{\vw_k}_1^3}\Abs{\sum_{i=1}^n \tau_i\vu_k^\T\mA(\vx_i)\vu_k\sigma''(\vu_k^\T\vx_i)}\right]   \nonumber\\
         & = \Exp_{\vtau}\left[\sup_{\norm{\vtheta}_{\fP}\leq Q,\norm{\vu}_1=1}\sum_{k=1}^\width \abs{a_k}\norm{\vw_k}_1^3\Abs{\sum_{i=1}^n \tau_i\vu^\T\mA(\vx_i)\vu\sigma''(\vu^\T\vx_i)}\right]   \nonumber\\
         & \leq Q\Exp_{\vtau}\left[\sup_{\norm{\vu}_1\leq 1,\norm{\vp}_1\leq 1,\norm{\vq}_1\leq 1}\Abs{\sum_{i=1}^n \tau_i\vp^\T\mA(\vx_i)\vq\sigma''(\vu^\T\vx_i)}\right]\nonumber\\
         & = Q\Exp_{\vtau}\left[\sup_{\norm{\vu}_1\leq 1,\norm{\vp}_1\leq 1,\norm{\vq}_1\leq 1}\Abs{\vp^\T\left(\sum_{i=1}^n \tau_i\mA(\vx_i)\sigma''(\vu^\T\vx_i)\right)\vq}\right]\nonumber\\
         & = Q\Exp_{\vtau}\left[\sup_{\norm{\vu}_1\leq 1,\norm{\vp}_1\leq 1,\norm{\vq}_1\leq 1}\sum_{\alpha,\beta=1}^d\abs{p_\alpha}\abs{q_\beta}\Abs{\sum_{i=1}^n \tau_i A_{\alpha\beta}(\vx_i)\sigma''(\vu^\T\vx_i)}\right]\nonumber\\
         & \leq Q\Exp_{\vtau}\left[\sup_{\norm{\vu}_1\leq 1}\max_{\alpha,\beta\in[d]}\Abs{\sum_{i=1}^n \tau_i A_{\alpha\beta}(\vx_i)\sigma''(\vu^\T\vx_i)}\right]\nonumber\\
         & \leq Q\Exp_{\vtau}\left[\sup_{\norm{\vu}_1\leq 1}\sum_{\alpha,\beta=1}^{d}\Abs{\sum_{i=1}^n \tau_i A_{\alpha\beta}(\vx_i)\sigma''(\vu^\T\vx_i)}\right]\nonumber\\
         & \leq Q\Exp_{\vtau}\left[\sum_{\alpha,\beta=1}^{d}\sup_{\norm{\vu}_1\leq 1}\Abs{\sum_{i=1}^n \tau_i A_{\alpha\beta}(\vx_i)\sigma''(\vu^\T\vx_i)}\right]\nonumber\\
         & = Q\sum_{\alpha,\beta=1}^{d}\Exp_{\vtau}\left[\sup_{\norm{\vu}_1\leq 1}\Abs{\sum_{i=1}^n \tau_i A_{\alpha\beta}(\vx_i)\sigma''(\vu^\T\vx_i)}\right].\label{eq..I_1PathNorm}
    \end{align}
    Note that $\sigma''(\vu^\T\vx_i)=0$ for $\vu=0$ and for any $\vx_i$. For any $\alpha,\beta\in[d]$, we have
    \begin{align}
        \Exp_{\vtau}\left[\sup_{\norm{\vu}_1\leq 1}\Abs{\sum_{i=1}^n \tau_i A_{\alpha\beta}(\vx_i)\sigma''(\vu^\T\vx_i)}\right]
        & \leq \Exp_{\vtau}\left[\sup_{\norm{\vu}_1\leq 1}\sum_{i=1}^n \tau_i A_{\alpha\beta}(\vx_i)\sigma''(\vu^\T\vx_i)\right]\nonumber\\
        &~~~~+\Exp_{\vtau}\left[\sup_{\norm{\vu}_1\leq 1}\sum_{i=1}^n -\tau_i A_{\alpha\beta}(\vx_i)\sigma''(\vu^\T\vx_i)\right]\nonumber\\
        & = 2\Exp_{\vtau}\left[\sup_{\norm{\vu}_1\leq 1}\sum_{i=1}^n \tau_i A_{\alpha\beta}(\vx_i)\sigma''(\vu^\T\vx_i)\right].\label{eq..I_1Symmetry}
    \end{align}
    Applying Lemma \ref{lem..RademacherComplexityContraction} with $\psi_i(y_i)=A_{\alpha\beta}(\vx_i)\sigma''(y_i)$ for $i\in[n]$, whose Lipschitz constant is $M$, we have for all $\alpha,\beta\in[d]$
    \begin{equation}
        \Exp_{\vtau}\left[\sup_{\norm{\vu}_1\leq 1}\sum_{i=1}^n \tau_i A_{\alpha\beta}(\vx_i)\sigma''(\vu^\T\vx_i)\right]
        \leq M\Exp_{\vtau}\left[\sup_{\norm{\vu}_1\leq 1}\sum_{i=1}^n \tau_i \vu^\T\vx_i\right].\label{eq..I_1Contraction}
    \end{equation}
    Therefore, combining \eqref{eq..I_1PathNorm}, \eqref{eq..I_1Symmetry}, and \eqref{eq..I_1Contraction}, we obtain
    \begin{align*}
        I_1
        & \leq 2MQd^2\Exp_{\vtau}\left[\sup_{\norm{\vu}_1\leq 1}\sum_{i=1}^n \tau_i \vu^\T\vx_i\right]\\
        & \leq 2MQd^2\sqrt{n}\sqrt{2\log(2d)},
    \end{align*}
    where the last inequality comes from the Rademacher bound for linear predictors in Lemma \ref{lem..linear}.

    For $I_2$ and $I_3$, we note that $\sigma(z)=\frac{1}{6}z^2\sigma''(z)$ and $\sigma'(z)=\frac{1}{2}z\sigma''(z)$. Then by similar arguments, we have
    \begin{align*}
        I_2
        &\leq MQd\sqrt{n}\sqrt{2\log(2d)},\\
        I_3
        &\leq \frac{1}{3}MQ\sqrt{n}\sqrt{2\log(2d)}.
    \end{align*}
    These estimates for $I_1,I_2,I_3$ combined with \eqref{eq..RademacherComplexityDecompositionI1I2I3} complete the proof.
\end{proof}

\subsection{Proofs of Generalization Bounds}
\label{sec:gen}

In the proofs of this section, we will first show in Proposition \ref{prop..ApproximationErrorTwoLayerBarronNorm} that two-layer neural networks $f(\vx;\vtheta)$ in \eqref{eqn:NN2} can approximate Barron-type functions with an approximation error $O\left( \frac{\norm{f}_{\fB}^2}{\width}\right)$. Second, for an arbitrary $f(\vx;\vtheta)=\fL \phi(\vx;\vtheta)$, we show its a posteriori generalization bound $\abs{\RD(\vtheta)-\RS(\vtheta)}\leq O\left(\frac{\norm{\vtheta}_{\fP}^2\log\norm{\vtheta}_{\fP}}{\sqrt{n}}\right)$ in Theorem \ref{thm:gen1}. Finally, the a priori generalization bound $\RD(\vtheta_{S,\lambda}) \leq O\left( \frac{\norm{f}_{\fB}^2}{\width}+\frac{\norm{f}_{\fB}^2\log\norm{f}_{\fB}}{\sqrt{n}}\right)$ is proved in Theorem \ref{thm:gen2}, where the first and second terms comes from the approximation error bound and the a posteriori generalization bound.

First, the approximation capacity of two-layer neural networks $f(\vx;\vtheta)$ can be characterized by Proposition \ref{prop..ApproximationErrorTwoLayerBarronNorm} below.

\begin{prop}[Approximation Error]\label{prop..ApproximationErrorTwoLayerBarronNorm}
    For any $f\in \fB(\Omega)$, there exists a two-layer neural network $f(\vx; \tilde{\vtheta})$
    of width $\width$ with $\norm{\tilde{\vtheta}}_{\fP}\leq 2\norm{f}_\fB$,
    \begin{equation*}
        \RD(\tilde{\vtheta}) := \Exp_{\vx\sim\fD}\tfrac{1}{2}(f(\vx, \tilde{\vtheta}) - f(\vx))^2
        \leq \frac{6M^2\norm{f}_{\fB}^2}{\width},
    \end{equation*}
    where $M$ introduced in \eqref{eqn:introM} controls the upper bound of the differential operator and $\width$ is the width of the neural network.
\end{prop}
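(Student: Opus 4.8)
The plan is to use the probabilistic (Monte-Carlo) construction of \cite{Weinan2019,e2019priori}, adapted to the integral representation of Definition \ref{def:bfun} and to the path norm of Definition \ref{prop:2L}. We may assume $\norm{f}_{\fB}>0$, since $\norm{f}_{\fB}=0$ forces $f\equiv0$ and the zero network works. Because $\norm{f}_{\fB}$ is an infimum over $\fP_f$, I would first fix a small $\epsilon\in(0,1)$ and choose $\rho\in\fP_f$ with $(\Exp_{(a,\vw)\sim\rho}\abs{a}^2\norm{\vw}_1^6)^{1/2}\leq(1+\epsilon)\norm{f}_{\fB}$. Then I draw $(a_k,\vw_k)_{k=1}^{\width}$ i.i.d.\ from $\rho$ and let $\tilde{\vtheta}$ be the parameter set whose $k$-th neuron has outer weight $a_k/\width$ and inner weight $\vw_k$, so that $f(\vx;\tilde{\vtheta})=\frac1\width\sum_{k=1}^\width a_k[\vw_k^\T\mA(\vx)\vw_k\sigma''(\vw_k^\T\vx)+\vb^\T(\vx)\vw_k\sigma'(\vw_k^\T\vx)+c(\vx)\sigma(\vw_k^\T\vx)]$ is, for each fixed $\vx\in\Omega$, an unbiased Monte-Carlo estimator of $f(\vx)$.

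Next I would bound, in expectation over the draw, both the risk and the path norm. For the risk: $f(\vx;\tilde\vtheta)$ is an average of $\width$ i.i.d.\ terms with mean $f(\vx)$, so $\Exp_{\tilde\vtheta}(f(\vx;\tilde\vtheta)-f(\vx))^2=\frac1\width\Var(a[\cdots])\leq\frac1\width\Exp_{(a,\vw)\sim\rho}\abs{a[\cdots]}^2$; using $\Omega=[0,1]^d$ (hence $\norm{\vx}_\infty\leq1$), Assumption \ref{asp:bound}, and the elementary bounds $\abs{\sigma''(z)}\leq\abs{z}$, $\abs{\sigma'(z)}\leq\tfrac12 z^2$, $\abs{\sigma(z)}\leq\tfrac16\abs{z}^3$, each summand satisfies $\abs{a[\cdots]}\leq(1+\tfrac12+\tfrac16)M\abs{a}\norm{\vw}_1^3<2M\abs{a}\norm{\vw}_1^3$. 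Integrating over $\vx\sim\fD$ and using Fubini then gives $\Exp_{\tilde\vtheta}\Exp_{\vx\sim\fD}(f(\vx;\tilde\vtheta)-f(\vx))^2\leq\frac{4M^2}{\width}\Exp_\rho\abs{a}^2\norm{\vw}_1^6\leq\frac{4M^2(1+\epsilon)^2\norm{f}_{\fB}^2}{\width}$. For the path norm, $\Exp_{\tilde\vtheta}\norm{\tilde\vtheta}_{\fP}=\Exp_{(a,\vw)\sim\rho}\abs{a}\norm{\vw}_1^3\leq(\Exp_\rho\abs{a}^2\norm{\vw}_1^6)^{1/2}\leq(1+\epsilon)\norm{f}_{\fB}$ by Cauchy--Schwarz.

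Finally I would close with Markov's inequality and a union bound. The event $\{\norm{\tilde\vtheta}_{\fP}>2\norm{f}_{\fB}\}$ has probability at most $\frac{1+\epsilon}{2}$, and the event $\{\Exp_{\vx\sim\fD}(f(\vx;\tilde\vtheta)-f(\vx))^2>\frac{12M^2\norm{f}_{\fB}^2}{\width}\}$ has probability at most $\frac{(1+\epsilon)^2}{3}$; for $\epsilon$ small enough their sum is $<1$ (it equals $\tfrac12+\tfrac13=\tfrac56$ at $\epsilon=0$), so there exists a realization $\tilde\vtheta$ that simultaneously satisfies $\norm{\tilde\vtheta}_{\fP}\leq2\norm{f}_{\fB}$ and $\RD(\tilde\vtheta)=\tfrac12\Exp_{\vx\sim\fD}(f(\vx;\tilde\vtheta)-f(\vx))^2\leq\frac{6M^2\norm{f}_{\fB}^2}{\width}$, which is exactly the assertion (one may afterwards let $\epsilon\to0$ in the choice of $\rho$). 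The hard part will be purely bookkeeping: pinning down the constants $2M$, $4M^2$, and $12$ so that the two Markov probabilities provably sum to less than $1$, and tracking how the $1/\width$ rescaling of the outer weights feeds simultaneously into the variance estimate and the path norm. There is no structural obstacle here, since the differential operator enters only through the fixed, uniformly bounded coefficients $\mA,\vb,c$ and the inner weights $\vw_k$ are frozen, so the extra nonlinearity in $\vw$ that complicates the optimization analysis is inert for this approximation statement.
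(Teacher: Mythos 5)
Your proof is correct and follows essentially the same route as the paper's: draw i.i.d.\ $(a_k,\vw_k)$ from a near-optimal $\rho$, scale outer weights by $1/\width$, bound the expected risk by the variance of a single term ($\leq 4M^2\norm{f}_\fB^2/\width$) and the expected path norm via Cauchy--Schwarz ($\leq\norm{f}_\fB$), then apply Markov and a union bound with thresholds $6M^2\norm{f}_\fB^2/\width$ and $2\norm{f}_\fB$ so the failure probabilities sum to $5/6<1$. Your only (harmless) refinement is to select $\rho$ within $\epsilon$ of the infimum rather than assuming the infimum is attained, which actually tightens up a small gap in the paper's ``without loss of generality'' step.
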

\begin{proof}
    Without loss of generality, let $\rho$ be the best representation, i.e., $\norm{f}_{\fB}^2= \Exp_{(a,\vw)\sim\rho}
        \abs{a}^2\norm{\vw}_1^6$. We set $\bar{\vtheta} = \{\frac{1}{\width}a_k,\vw_k\}_{k=1}^\width$, where $(a_k,\vw_k)$, $k=1,\cdots,\width$ are independent sampled from $\rho$. Let
    \begin{align*}
        f_{\bar{\vtheta}}(\vx)
         = \frac{1}{\width}\sum_{k=1}^\width a_k[\vw_k^\T\mA(\vx)\vw_k\sigma''(\vw_k^\T\vx)+\vb^\T(\vx)\vw_k\sigma'(\vw_k^\T\vx)+c(\vx)\sigma(\vw_k^\T\vx)].
    \end{align*}
    Recall the definition $\RD(\bar{\vtheta})=\Exp_{\vx\sim\fD}\frac{1}{2}\abs{f_{\bar{\vtheta}}(\vx)-f(\vx)}^2$.
    Then
    \begin{align*}
         & ~~~~2\Exp_{\bar{\vtheta}}\RD(\bar{\vtheta})\\
         & =\Exp_{\vx\sim\fD}\Exp_{\bar{\vtheta}}\abs{f_{\bar{\vtheta}}(\vx)-f(\vx)}^2\\
         & =\Exp_{\vx\sim\fD}\mathrm{Var}_{\{(a_k,\vw_k)\} \text{i.i.d.}\sim\rho}\left(\frac{1}{\width}\sum_{k=1}^\width a_k[\vw_k^\T\mA(\vx)\vw_k\sigma''(\vw_k^\T\vx)+\vb^\T(\vx)\vw_k\sigma'(\vw_k^\T\vx)+c(\vx)\sigma(\vw_k^\T\vx)]\right) \\
         & =\Exp_{\vx\sim\fD}\frac{1}{\width}\mathrm{Var}_{(a,\vw)\sim\rho}\left(a[\vw^\T\mA(\vx)\vw\sigma''(\vw^\T\vx)+\vb^\T(\vx)\vw\sigma'(\vw^\T\vx)+c(\vx)\sigma(\vw^\T\vx)]\right)\\
         & \leq \frac{1}{\width}\Exp_{\vx\sim\fD}\Exp_{(a,\vw)\sim\rho}\left(a[\vw^\T\mA(\vx)\vw\sigma''(\vw^\T\vx)+\vb^\T(\vx)\vw\sigma'(\vw^\T\vx)+c(\vx)\sigma(\vw^\T\vx)]\right)^2\\
         & \leq \frac{1}{\width}\Exp_{\vx\sim\fD}\Exp_{(a,\vw)\sim\rho}\abs{a}^2\left(M\norm{\vw}_1^3+\tfrac{1}{2}M\norm{\vw}_1^3+\tfrac{1}{6}M\norm{\vw}_1^3\right)^2\\
         & \leq \frac{4M^2}{\width}\Exp_{(a,\vw)\sim\rho}\abs{a}^2\norm{\vw}_1^6\\
         & = \frac{4M^2\norm{f}_{\fB}^2}{\width}.
    \end{align*}
    Also, we have
    \begin{align*}
        \Exp_{\bar{\vtheta}}\norm{\bar{\vtheta}}_{\fP}
         & = \Exp_{\{(a_k,\vw_k)\} \text{i.i.d.}\sim\rho}\frac{1}{\width}\sum_{k=1}^\width\abs{a_k}\norm{\vw_k}_1^3 \\
         & = \Exp_{(a,\vw)\sim\rho}\abs{a}\norm{\vw}_1^3\\
         & \leq \norm{f}_{\fB}.
    \end{align*}
    Define two events $E_1:=\{\RD(\bar{\vtheta})<\frac{6M^2\norm{f}_{\fB}^2}{\width}\}$ and $E_2:=\{\norm{\bar{\vtheta}}_{\fP}<2\norm{f}_{\fB}\}$.
    By Markov inequality, we have
    \begin{align*}
        \Prob(E_1)
         & =1-\Prob\left(\RD(\bar{\vtheta})\geq\frac{6M^2\norm{f}_{\fB}^2}{\width}\right)
        \geq1-\frac{\Exp_{\bar{\vtheta}}\RD(\bar{\vtheta})}{6M^2\norm{f}_{\fB}^2/\width}
        \geq \frac{2}{3},                                             \\
        \Prob(E_2)
         & =1-\Prob(
        \norm{\bar{\vtheta}}_{\fP}\geq2\norm{f}_{\fB})
        \geq 1-\frac{\Exp_{\bar{\vtheta}}\norm{\bar{\vtheta}}_{\fP}}{2\norm{f}_{\fB}}
        \geq \frac{1}{2}.
    \end{align*}
    Thus
    \begin{equation*}
        \Prob(E_1\cap E_2)\geq \Prob(E_1)+\Prob(E_2)-1\geq \frac{2}{3}+\frac{1}{2}-1>0.
    \end{equation*}
\end{proof}

Second, we use Theorem \ref{thm..RademacherComplexityGeneralizationGap} with $\mathcal{F}=\mathcal{F}_Q$ and $\mathcal{Z}=\Omega$ to show the a posteriori generalization bound in Theorem \ref{thm:gen1}.

\begin{proof}[Proof of Theorem \ref{thm:gen1}.]
    Let $\fH_Q:=\{\ell(f(\vx),f_{\vtheta} (\vx))\mid\norm{\vtheta}_{\fP}\leq Q\}$, then $\fH=\cup_{Q=1}^\infty\fH_Q$. Note that 
    \begin{align*}
        \sup_{\vx\in\Omega}\abs{f_{\vtheta}(\vx)}
        &= \sup_{\vx\in\Omega}\Abs{\sum_{k=1}^\width a_k[\vw_k^\T\mA(\vx)\vw_k\sigma''(\vw_k^\T\vx)+\vb^\T(\vx)\vw_k\sigma'(\vw_k^\T\vx)+c(\vx)\sigma(\vw_k^\T\vx)]}\\
        &\leq \sum_{k=1}^\width\abs{a_k}\norm{\vw_k}_1^3\left[M+\frac{1}{2}M+\frac{1}{6}M\right]\\
        &\leq \frac{5}{3}M\norm{\vtheta}_{\fP}.
    \end{align*}
    Therefore, for functions in $\fH_Q$, since $|f(x)|\leq 1$ by assumption, we have
    \begin{align*}
        0
        \leq \ell(f(\vx),f_{\vtheta} (\vx))
        &\leq \frac{1}{2}(1+\abs{f_{\vtheta}(\vx)})^2\\
        &\leq \frac{1}{2}\left(1+\frac{5}{3}M\norm{\vtheta}_{\fP}\right)^2\\
        &\leq \frac{32}{9}M^2Q^2\leq 4M^2Q^2
    \end{align*}
    for all $\vx\in\Omega$ and all $Q\geq 1$.
    For $\norm{\vtheta}_{\fP}\leq Q$, we note that $\ell(y,\cdot)$ is a Lipschitz function with a Lipschitz constant which is no larger than $\sup_{\vx\in\Omega}\abs{f_{\vtheta}(\vx)}\leq \frac{5}{3}M\norm{\vtheta}_{\fP}+1$.
    Let $S'$ be an arbitrary set of $n$ samples of $\Omega$, then 
    \begin{equation*}
        \Rad_{S'}(\fH_Q)
        \leq (\frac{5}{3}M\norm{\vtheta}_{\fP}+1)\Rad_{S'}(\fF_Q)
        \leq (\frac{5}{3}MQ+1)\Rad_{S'}(\fF_Q).
    \end{equation*}
        
     Let us assume $MQ\geq \frac{3}{5}$ without loss of generality.   By Lemma \ref{lem..RademacherComplexityTwoLayerPathNorm} and Theorem \ref{thm..RademacherComplexityGeneralizationGap}, for any $\delta$ given in Theorem \ref{thm:gen1} and any positive integer $Q$ with probability at least $1-\delta_Q$ over $S$ with $\delta_Q=\frac{6\delta}{\pi^2Q^2}$, we have
    \begin{align*}
        \sup_{\norm{\vtheta}_{\fP}\leq Q}\abs{\RD(\vtheta)-\RS(\vtheta)}
         & \leq (\frac{5}{3}MQ+1)2\Exp_{S'}\Rad_{S'}(\fF_Q)+4M^2Q^2\sqrt{\frac{\log(2/\delta_Q)}{2n}}     \\
         & \leq 27M^2Q^2d^2\sqrt{\frac{2\log(2d)}{n}}+4M^2Q^2\sqrt{\frac{\log(\pi^2Q^2/3\delta)}{2n}}.
    \end{align*}
    
    For any $\vtheta\in\sR^{\width(d+1)}$ given in Theorem \ref{thm:gen1}, choose the integer $Q$ such that $\norm{\vtheta}_{\fP}\leq Q\leq \norm{\vtheta}_{\fP}+1$. Then we have
    \begin{align*}
        \abs{\RD(\vtheta)-\RS(\vtheta)}
         & \leq 27M^2Q^2d^2\sqrt{\frac{2\log(2d)}{n}}+4M^2Q^2\sqrt{\frac{\log(\pi^2Q^2/3\delta)}{2n}}\\
         &\leq 27M^2(\norm{\vtheta}_{\fP}+1)^2d^2\sqrt{\frac{2\log(2d)}{n}}+4M^2(\norm{\vtheta}_{\fP}+1)^2\sqrt{\frac{\log \pi(\norm{\vtheta}_{\fP}+1)}{n}+\frac{\log(1/3\delta)}{2n}}           \\
         & \leq 27M^2(\norm{\vtheta}_{\fP}+1)^2d^2\sqrt{\frac{2\log(2d)}{n}}+4M^2(\norm{\vtheta}_{\fP}+1)^2\left\{\frac{\log[\pi(\norm{\vtheta}_{\fP}+1)]}{\sqrt{n}}+\sqrt{\frac{\log(1/3\delta)}{2n}}\right\}\\
         & \leq \frac{(\norm{\vtheta}_{\fP}+1)^2}{\sqrt{n}}2M^2(14d^2\sqrt{2\log(2d)}+\log[\pi(\norm{\vtheta}_{\fP}+1)]+\sqrt{2\log(1/3\delta)}),
    \end{align*}
    where we have used the facts that $\sqrt{a+b}\leq \sqrt{a}+\sqrt{b}$ for $a,b>0$ and that $\sqrt{a}\leq a$ for $a\geq 1$.
    
    The bound just above holds with probability $1-\delta_Q$ for any pair $(\bm{\theta},Q)$ as long as $\norm{\vtheta}_{\fP}\leq Q$. By the definition $\delta_Q=\frac{6\delta}{\pi^2Q^2}$, we have $\sum_{Q=1}^\infty\delta_Q=\delta$. Therefore, for any $\vtheta\in\sR^{\width(d+1)}$ given in Theorem \ref{thm:gen1}, the above bound holds with probability $1-\delta$, which finishes the proof of Theorem \ref{thm:gen1}.
\end{proof}

Finally, based on the approximation bound in Proposition \ref{prop..ApproximationErrorTwoLayerBarronNorm} and the a posteriori generalization bound in Theorem \ref{thm:gen1}, we show the a priori generalization bound in Theorem \ref{thm:gen2}.


\begin{proof}[Proof of Theorem \ref{thm:gen2}.]
    Note that
    \begin{equation*}
        \RD(\vtheta_{S,\lambda})
        =\RD(\tilde{\vtheta})+[\RD(\vtheta_{S,\lambda})-J_{S,\lambda}(\vtheta_{S,\lambda})]
        +[J_{S,\lambda}(\vtheta_{S,\lambda})-J_{S,\lambda}(\tilde{\vtheta})]
        +[J_{S,\lambda}(\tilde{\vtheta})-\RD(\tilde{\vtheta})].
    \end{equation*}
    By definition,  $J_{S,\lambda}(\vtheta_{S,\lambda})-J_{S,\lambda}(\tilde{\vtheta})\leq 0$.
    By Proposition \ref{prop..ApproximationErrorTwoLayerBarronNorm}, there exists $\tilde{\vtheta}$ such that $\RD(\tilde{\vtheta})\leq \frac{6M^2\norm{f}_{\fB}^2}{\width}$. Therefore,
    \begin{equation}\label{eqn:decom}
    \RD(\vtheta_{S,\lambda})
        \leq \frac{6M^2\norm{f}_{\fB}^2}{\width}+[\RD(\vtheta_{S,\lambda})-J_{S,\lambda}(\vtheta_{S,\lambda})]
        +[J_{S,\lambda}(\tilde{\vtheta})-\RD(\tilde{\vtheta})].
    \end{equation}
    By Theorem \ref{thm:gen1}, we have with probability at least $1-\delta/2$,
    \begin{align}\label{eqn:RDJ}
         \RD(\vtheta_{S,\lambda})-J_{S,\lambda}(\vtheta_{S,\lambda})
         & =\RD(\vtheta_{S,\lambda})-\RS(\vtheta_{S,\lambda})-\frac{\lambda}{\sqrt{n}}\norm{\vtheta_{S,\lambda}}_{\fP}^2\log[\pi(\norm{\vtheta_{S,\lambda}}_{\fP}+1)] \nonumber\\
         & \leq \frac{1}{\sqrt{n}}2M^2(\norm{\vtheta_{S,\lambda}}_{\fP}+1)^2
         \{\log[\pi(\norm{\vtheta_{S,\lambda}}_{\fP}+1)]+14d^2\sqrt{2\log(2d)}+\sqrt{2\log(2/3\delta)}\}\nonumber\\
         &~~-\frac{\lambda}{\sqrt{n}}\norm{\vtheta_{S,\lambda}}_{\fP}^2\log[\pi(\norm{\vtheta_{S,\lambda}}_{\fP}+1)] \nonumber\\
         & \leq \frac{1}{\sqrt{n}}4M^2(\norm{\vtheta_{S,\lambda}}_{\fP}^2+1)\{\log[\pi(\norm{\vtheta_{S,\lambda}}_{\fP}+1)]+14d^2\sqrt{2\log(2d)}+\sqrt{2\log(2/3\delta)}\}\nonumber\\
         &~~-\frac{\lambda}{\sqrt{n}}\norm{\vtheta_{S,\lambda}}_{\fP}^2\log[\pi(\norm{\vtheta_{S,\lambda}}_{\fP}+1)] \nonumber\\
         & \leq \frac{1}{\sqrt{n}}\norm{\vtheta_{S,\lambda}}_{\fP}^2\log[\pi(\norm{\vtheta_{S,\lambda}}_{\fP}+1)] \left\{4M^2[1+14d^2\sqrt{2\log(2d)}+\sqrt{2\log(2/3\delta)}]-\lambda\right\}\nonumber\\
         &~~+\frac{4M^2}{\sqrt{n}}\log[\pi(\norm{\vtheta_{S,\lambda}}_{\fP}+1)]+\frac{1}{\sqrt{n}}4M^2(14d^2\sqrt{2\log(2d)}+\sqrt{2\log(2/3\delta)})\nonumber\\
         & \leq \frac{1}{\sqrt{n}}\norm{\vtheta_{S,\lambda}}_{\fP}^2\log[\pi(\norm{\vtheta_{S,\lambda}}_{\fP}+1)] \left\{4M^2[2+14d^2\sqrt{2\log(2d)}+\sqrt{2\log(2/3\delta)}]-\lambda\right\}\nonumber\\
         &~~+\frac{1}{\sqrt{n}}4M^2\left[\log(2\pi)+14d^2\sqrt{2\log(2d)}+\sqrt{2\log(2/3\delta)}\right]\nonumber\\
         & \leq \frac{1}{\sqrt{n}}4M^2\left[\log(2\pi)+14d^2\sqrt{2\log(2d)}+\sqrt{2\log(2/3\delta)}\right],
    \end{align}
    where we have used the facts that $(a+b)^2\leq 2a^2+2b^2$ for all $a,b\geq 0$ and that $\lambda\geq 4M^2[2+14d^2\sqrt{2\log(2d)}+\sqrt{2\log(2/3\delta)}]$ in the second and last inequalities, respectively.
    By Theorem \ref{thm:gen1} again, with probability at least $1-\delta/2$, we have 
    \begin{align}\label{eqn:JS}
        J_{S,\lambda}(\tilde{\vtheta})-\RD(\tilde{\vtheta})
        &\leq \frac{1}{\sqrt{n}}2M^2(\norm{\tilde{\vtheta}}_{\fP}+1)^2
        \{\log[\pi(\norm{\tilde{\vtheta}}_{\fP}+1)]+14d^2\sqrt{2\log(2d)}+\sqrt{2\log(2/3\delta)}\}\nonumber\\
        &~~+\frac{\lambda}{\sqrt{n}}\norm{\tilde{\vtheta}}_{\fP}^2\log[\pi(\norm{\tilde{\vtheta}}_{\fP}+1)] \nonumber\\
        &\leq \frac{1}{\sqrt{n}}4M^2(\norm{\tilde{\vtheta}}_{\fP}^2+1)
        \{\log[\pi(\norm{\tilde{\vtheta}}_{\fP}+1)]+14d^2\sqrt{2\log(2d)}+\sqrt{2\log(2/3\delta)}\}\nonumber\\
        &~~+\frac{\lambda}{\sqrt{n}}\norm{\tilde{\vtheta}}_{\fP}^2\log[\pi(\norm{\tilde{\vtheta}}_{\fP}+1)].
    \end{align}
    Note that, by Proposition \ref{prop..ApproximationErrorTwoLayerBarronNorm}, we have $\norm{\tilde{\vtheta}}_{\fP}\leq 2\norm{f}_{\fB}$. Hence, the inequality \eqref{eqn:JS} becomes
    \begin{align}
    J_{S,\lambda}(\tilde{\vtheta})-\RD(\tilde{\vtheta})
        &\leq \frac{1}{\sqrt{n}}4M^2(4\norm{f}_{\fB}^2+1)
        \{\log[\pi(2\norm{f}_{\fB}+1)]+14d^2\sqrt{2\log(2d)}+\sqrt{2\log(2/3\delta)}\}\nonumber\\
        &~~+\frac{4\lambda}{\sqrt{n}}\norm{f}_{\fB}^2\log[\pi(2\norm{f}_{\fB}+1)].
    \end{align}
    Adding the estimates in \eqref{eqn:decom}, \eqref{eqn:RDJ}, and \eqref{eqn:JS} together completes the proof.
\end{proof}

\section{Conclusion}
\label{sec:conc}

In this paper, we theoretically analyzed the optimization problem arising in deep learning-based PDE solvers for second-order linear PDEs and two-layer neural networks under the assumption of over-parametrization  (i.e., the network width is sufficiently large). In particular, we show that gradient descent can identify a global minimizer of the least-squares optimization problem for solving second-order linear PDEs. Note that we have fixed the samples in the least-squares optimization, while practical algorithms would randomly sample the PDE domain and its boundaries in every iteration of gradient descent. Hence, there is still a gap between the optimization problem analyzed in this paper and the practical algorithm. This gap can be filled by studying the convergence behavior of stochastic gradient descent, which will be left as future work. 

We have also analyzed the generalization error of deep learning-based PDE solvers for second-order linear PDEs and two-layer neural networks, when the right-hand-side function of the PDE is in a Barron-type space and the least-squares optimization is regularized with a Barron-type norm, without the over-parametrization assumption. The Barron-type space and norm are adaptive the PDE problem and are different from those for regression problems. The global minimizer of the regularized least-squares problem can generalize well with a scaling of order $\frac{1}{\width}+\frac{1}{\sqrt{n}}$, where $\width$ is the number of neurons and $n$ is the number of data samples. Note that whether gradient descent methods can identify a global minimizer of the regularized least-squares problem is still unknown. This is left as interesting future work.

{\bf Acknowledgments.} H. Y. was partially supported by the US National Science Foundation under award DMS-1945029.

\bibliographystyle{plain}
\bibliography{ref}

\begin{thebibliography}{10}

\bibitem{barron1993}
A.~R. Barron.
\newblock Universal approximation bounds for superpositions of a sigmoidal
  function.
\newblock {\em IEEE Transactions on Information Theory}, 39(3):930--945, May
  1993.

\bibitem{BERG2018}
J.~Berg and K.~Nystr{\"o}m.
\newblock {A} {U}nified {D}eep {A}rtificial {N}eural {N}etwork {A}pproach to
  {P}artial {D}ifferential {E}quations in {C}omplex {G}eometries.
\newblock {\em Neurocomputing}, 317:28 -- 41, 2018.

\bibitem{DBLP:journals/corr/abs-1809-03062}
Julius Berner, Philipp Grohs, and Arnulf Jentzen.
\newblock Analysis of the generalization error: Empirical risk minimization
  over deep artificial neural networks overcomes the curse of dimensionality in
  the numerical approximation of black-scholes partial differential equations.
\newblock {\em CoRR}, abs/1809.03062, 2018.

\bibitem{Yuan1}
Yuan Cao and Quanquan Gu.
\newblock Generalization bounds of stochastic gradient descent for wide and
  deep neural networks.
\newblock {\em CoRR}, abs/1905.13210, 2019.

\bibitem{Carleo2017}
G.~Carleo and M.~Troyer.
\newblock Solving the {Q}uantum {M}any-body {P}roblem with {A}rtificial
  {N}eural {N}etworks.
\newblock {\em Science}, 355:602--606, 2017.

\bibitem{doi:10.1002/mma.5575}
Liang Chen and Congwei Wu.
\newblock A note on the expressive power of deep rectified linear unit networks
  in high-dimensional spaces.
\newblock {\em Mathematical Methods in the Applied Sciences}, 42(9):3400--3404,
  2019.

\bibitem{Chen1}
Zixiang Chen, Yuan Cao, Difan Zou, and Quanquan Gu.
\newblock How much over-parameterization is sufficient to learn deep relu
  networks?
\newblock {\em CoRR}, arXiv:1911.12360, 2019.

\bibitem{doi:10.1002/cnm.1640100303}
M.~W. M.~G. Dissanayake and N.~Phan-Thien.
\newblock Neural-network-based {A}pproximations for {S}olving {P}artial
  {D}ifferential {E}quations.
\newblock {\em Comm. Numer. Methods Engrg.}, 10:195--201, 1994.

\bibitem{Simon18}
Simon~S. Du, Jason~D. Lee, Haochuan Li, Liwei Wang, and Xiyu Zhai.
\newblock Gradient descent finds global minima of deep neural networks.
\newblock {\em CoRR}, abs/1811.03804, 2018.

\bibitem{du2018gradient}
Simon~S. Du, Xiyu Zhai, Barnabas Poczos, and Aarti Singh.
\newblock Gradient descent provably optimizes over-parameterized neural
  networks.
\newblock In {\em International Conference on Learning Representations}, 2019.

\bibitem{EYu2018}
W.~E and B.~Yu.
\newblock The {D}eep {R}itz {M}ethod: a {D}eep {L}earning-based {N}umerical
  {A}lgorithm for {S}olving {V}ariational {P}roblems.
\newblock {\em Commun. Math. Stat.}, 6:1--12, 2018.

\bibitem{E2017}
Weinan E, Jiequn Han, and Arnulf Jentzen.
\newblock Deep learning-based numerical methods for high-dimensional parabolic
  partial differential equations and backward stochastic differential
  equations.
\newblock {\em Communications in Mathematics and Statistics}, 5(4):349--380,
  Dec 2017.

\bibitem{e2019priori}
Weinan E, Chao Ma, and Qingcan Wang.
\newblock A priori estimates of the population risk for residual networks.
\newblock 2019.

\bibitem{Weinan2019}
Weinan E, Chao Ma, and Lei Wu.
\newblock A priori estimates of the population risk for two-layer neural
  networks.
\newblock {\em Communications in Mathematical Sciences}, 17(5):1407 -- 1425,
  2019.

\bibitem{E_2020}
Weinan E, Chao Ma, and Lei Wu.
\newblock A comparative analysis of optimization and generalization properties
  of two-layer neural network and random feature models under gradient descent
  dynamics.
\newblock {\em Science China Mathematics}, 63(7):1235–1258, Jan 2020.

\bibitem{DBLP:journals/corr/abs-1807-00297}
Weinan E and Qingcan Wang.
\newblock Exponential convergence of the deep neural network approximation for
  analytic functions.
\newblock {\em CoRR}, abs/1807.00297, 2018.

\bibitem{IanYoshuaAaron2016}
I.~Goodfellow, Y.~Bengio, and A.~Courville.
\newblock {\em Deep Learning}.
\newblock MIT Press, Cambridge, 2016.

\bibitem{Deflation}
Yiqi Gu, Chunmei Wang, and Haizhao Yang.
\newblock Structure probling neural network deflation.
\newblock {\em CoRR}, 2020.

\bibitem{SelectNet}
Yiqi Gu, Haizhao Yang, and Chao Zhou.
\newblock Selectnet: Self-paced learning for high-dimensional partial
  differential equations.
\newblock {\em CoRR}, abs/2001.04860, 2020.

\bibitem{Han8505}
J.~Han, A.~Jentzen, and W.~E.
\newblock Solving {H}igh-dimensional {P}artial {D}ifferential {E}quations
  {U}sing {D}eep {L}earning.
\newblock {\em Proc. Natl. Acad. Sci. USA}, 115:8505--8510, 2018.

\bibitem{Han2018ConvergenceOT}
Jiequn Han and Jihao Long.
\newblock Convergence of the deep bsde method for coupled fbsdes.
\newblock {\em ArXiv}, abs/1811.01165, 2018.

\bibitem{DBLP:journals/corr/HeZRS15}
Kaiming He, Xiangyu Zhang, Shaoqing Ren, and Jian Sun.
\newblock Deep residual learning for image recognition.
\newblock {\em CoRR}, abs/1512.03385, 2015.

\bibitem{DBLP:journals/corr/HuangLW16a}
Gao Huang, Zhuang Liu, and Kilian~Q. Weinberger.
\newblock Densely connected convolutional networks.
\newblock {\em CoRR}, abs/1608.06993, 2016.

\bibitem{Huang2019}
Jianguo Huang, Haoqin Wang, and Haizhao Yang.
\newblock Int-deep: A deep learning initialized iterative method for nonlinear
  problems.
\newblock {\em Journal of Computational Physics}, page 109675, 2020.

\bibitem{HJKN19_814}
M.~Hutzenthaler, A.~Jentzen, Th. Kruse, and T.~A. Nguyen.
\newblock A proof that rectified deep neural networks overcome the curse of
  dimensionality in the numerical approximation of semilinear heat equations.
\newblock Technical Report 2019-10, Seminar for Applied Mathematics, ETH
  Z{\"u}rich, Switzerland, 2019.

\bibitem{Arthur18}
Arthur Jacot, Franck Gabriel, and Cl{\'{e}}ment Hongler.
\newblock Neural tangent kernel: Convergence and generalization in neural
  networks.
\newblock {\em CoRR}, abs/1806.07572, 2018.

\bibitem{Khoo2018}
Y.~Khoo, J.~Lu, and L.~Ying.
\newblock Solving for {H}igh-dimensional {C}ommittor {F}unctions {U}sing
  {A}rtificial {N}eural {N}etworks.
\newblock {\em Res. Math. Sci.}, 6:1--13, 2019.

\bibitem{712178}
I.E. Lagaris, A.~Likas, and D.~I. Fotiadis.
\newblock Artificial {N}eural {N}etworks for {S}olving {O}rdinary and {P}artial
  {D}ifferential {E}quations.
\newblock {\em IEEE Trans. Neural Networks}, 9:987--1000, 1998.

\bibitem{DBLP:journals/corr/LiangS16}
Shiyu Liang and R.~Srikant.
\newblock Why deep neural networks?
\newblock {\em CoRR}, abs/1610.04161, 2016.

\bibitem{Liao2019}
Y.~Liao and P.~Ming.
\newblock {Deep Nitsche method: deep Ritz method with essential boundary
  conditions}.
\newblock {\em arXiv e-prints}, arXiv:1912.01309, 2019.

\bibitem{Shen3}
Jianfeng {Lu}, Zuowei {Shen}, Haizhao {Yang}, and Shijun {Zhang}.
\newblock {Deep Network Approximation for Smooth Functions}.
\newblock {\em arXiv e-prints}, page arXiv:2001.03040, January 2020.

\bibitem{Yiping20}
Yiping Lu, Chao Ma, Yulong Lu, Jianfeng Lu, and Lexing Ying.
\newblock A mean-field analysis of deep resnet and beyond: Towards provable
  optimization via overparameterization from depth.
\newblock {\em CoRR}, abs/2003.05508, 2020.

\bibitem{MeiE7665}
Song Mei, Andrea Montanari, and Phan-Minh Nguyen.
\newblock A mean field view of the landscape of two-layer neural networks.
\newblock {\em Proceedings of the National Academy of Sciences},
  115(33):E7665--E7671, 2018.

\bibitem{Hadrien}
Hadrien Montanelli and Qiang Du.
\newblock New error bounds for deep relu networks using sparse grids.
\newblock {\em SIAM Journal on Mathematics of Data Science}, 1(1):78--92, 2019.

\bibitem{MO}
Hadrien Montanelli and Haizhao Yang.
\newblock Error bounds for deep relu networks using the kolmogorov–arnold
  superposition theorem.
\newblock {\em Neural Networks}, 129:1 -- 6, 2020.

\bibitem{bandlimit}
Hadrien Montanelli, Haizhao Yang, and Qiang Du.
\newblock Deep {ReLU} networks overcome the curse of dimensionality for
  bandlimited functions.
\newblock 2019.

\bibitem{Opschoor2019}
Joost~A.A. Opschoor, Christoph Schwab, and Jakob Zech.
\newblock Exponential relu dnn expression of holomorphic maps in high
  dimension.
\newblock Technical report, Zurich, 2019-07.

\bibitem{poggio2017}
T.~Poggio, H.~N. Mhaskar, L.~Rosasco, B.~Miranda, and Q.~Liao.
\newblock Why and when can deep---but not shallow---networks avoid the curse of
  dimensionality: {A} review.
\newblock {\em International Journal of Automation and Computing}, 14:503--519,
  2017.

\bibitem{RAISSI2019686}
M.~Raissi, P.~Perdikaris, and G.E. Karniadakis.
\newblock Physics-informed {N}eural {N}etworks: a {D}eep {L}earning {F}ramework
  for {S}olving {F}orward and {I}nverse {P}roblems {I}nvolving {N}onlinear
  {P}artial {D}ifferential {E}quations.
\newblock {\em J. Comput. Phys.}, 378:686 -- 707, 2019.

\bibitem{RUDD2015}
K.~Rudd and S.~Ferrari.
\newblock A {C}onstrained {I}ntegration ({CINT}) {A}pproach to {S}olving
  {P}artial {D}ifferential {E}quations {U}sing {A}rtificial {N}eural
  {N}etworks.
\newblock {\em Neurocomputing}, 155:277 -- 285, 2015.

\bibitem{Shalev-Shwartz2014}
S.~Shalev-Shwartz and S.~Ben-David.
\newblock {\em Understanding machine learning: From theory to algorithms}.
\newblock Cambridge university press, 2014.

\bibitem{Shen1}
Zuowei Shen, Haizhao Yang, and Shijun Zhang.
\newblock Nonlinear approximation via compositions.
\newblock {\em Neural Networks}, 119:74 -- 84, 2019.

\bibitem{Shen2}
Zuowei Shen, Haizhao Yang, and Shijun Zhang.
\newblock Deep network approximation characterized by number of neurons.
\newblock {\em Communications in Computational Physics}, 28(5):1768--1811,
  2020.

\bibitem{Shen5}
Zuowei Shen, Haizhao Yang, and Shijun Zhang.
\newblock Neural network approximation: Three hidden layers are enough.
\newblock {\em arXive:2010.14075}, 2020.

\bibitem{Shen4}
Zuowei Shen, Haizhao Yang, and Shijun Zhang.
\newblock Deep network approximation with discrepancy being reciprocal of width
  to power of depth.
\newblock {\em Neural Computation}, To appear.

\bibitem{PINNgen}
Yeonjong Shin, Jerome Darbon, and George~Em Karniadakis.
\newblock On the convergence of physics informed neural networks for linear
  second-order elliptic and parabolic type pdes, 2020.

\bibitem{SIRIGNANO2018}
J.~Sirignano and K.~Spiliopoulos.
\newblock {DGM}: a {D}eep {L}earning {A}lgorithm for {S}olving {P}artial
  {D}ifferential {E}quations.
\newblock {\em J. Comput. Phys.}, 375:1339 -- 1364, 2018.

\bibitem{Weinan2019BarronSA}
E.~Weinan, Chao Ma, and Lei Wu.
\newblock Barron spaces and the compositional function spaces for neural
  network models.
\newblock {\em ArXiv}, abs/1906.08039, 2019.

\bibitem{Wang}
Yunfei Yang and Yang Wang.
\newblock {Approximation in shift-invariant spaces with deep ReLU neural
  networks}.
\newblock {\em arXiv e-prints}, page arXiv:2005.11949, May 2020.

\bibitem{yarotsky2017}
Dmitry Yarotsky.
\newblock Error bounds for approximations with deep {ReLU} networks.
\newblock {\em Neural Networks}, 94:103 -- 114, 2017.

\bibitem{2019arXiv190609477Y}
Dmitry {Yarotsky} and Anton {Zhevnerchuk}.
\newblock {The phase diagram of approximation rates for deep neural networks}.
\newblock {\em arXiv e-prints}, page arXiv:1906.09477, June 2019.

\bibitem{zhang2019}
Y.~Zhang, Z.-Q.~J. Xu, T.~Luo, and Z.~Ma.
\newblock A type of generalization error induced by initialization in deep
  neural networks.
\newblock {\em arXiv e-prints}, arXiv:1905.07777, 2019.

\end{thebibliography}
\end{document}